\documentclass[oneside,english]{amsart}
\usepackage[T1]{fontenc}
\usepackage[latin9]{inputenc}
\setcounter{secnumdepth}{1}
\setcounter{tocdepth}{1}
\usepackage{color}
\usepackage{babel}
\usepackage{amstext}
\usepackage{amsthm}
\usepackage{amssymb}
\usepackage[unicode=true,
 bookmarks=true,bookmarksnumbered=true,bookmarksopen=true,bookmarksopenlevel=1,
 breaklinks=true,pdfborder={0 0 0},pdfborderstyle={},backref=page,colorlinks=true]
 {hyperref}
\hypersetup{pdftitle={Waring's Problem For Locally Nilpotent Groups: The Case of Discrete Heisenberg Groups},
 pdfauthor={Ya-Qing Hu},
 pdfsubject={11P05, 11C08, 20M14, 20F18},
 pdfkeywords={Key words and phrases: Waring's problem, polynomial maps, commutative semigroups, locally nilpotent groups, discrete Heisenberg groups.}}

\makeatletter
\numberwithin{equation}{section}
\numberwithin{figure}{section}
\theoremstyle{plain}
\newtheorem*{question*}{\protect\questionname}
\theoremstyle{plain}
\newtheorem{thm}{\protect\theoremname}
\theoremstyle{plain}
\newtheorem{prop}{\protect\propositionname}
\theoremstyle{plain}
\newtheorem{cor}{\protect\corollaryname}
\theoremstyle{plain}
\newtheorem*{thm*}{\protect\theoremname}
\theoremstyle{definition}
 \newtheorem{example}{\protect\examplename}
\theoremstyle{remark}
\newtheorem*{rem*}{\protect\remarkname}
\theoremstyle{plain}
\newtheorem*{prop*}{\protect\propositionname}
\theoremstyle{plain}
\newtheorem{lem}{\protect\lemmaname}
\theoremstyle{definition}
\newtheorem*{example*}{\protect\examplename}

\raggedbottom

\DeclareMathOperator{\diag}{diag}

\DeclareMathOperator{\Ima}{Im}

\DeclareMathOperator{\Int}{Int}

\DeclareMathOperator{\rank}{rank}

\usepackage{pgfplots}
\pgfplotsset{compat=1.11}
\usepackage{xcolor}
\usepackage{tikz}
\usepackage{tikz-cd}
\usepackage{tikz-3dplot}
\usetikzlibrary{%
  positioning,%
  patterns,%
  shapes,%
  matrix,%
  calc,%
  arrows%
}
\allowdisplaybreaks

\makeatother

\providecommand{\corollaryname}{Corollary}
\providecommand{\examplename}{Example}
\providecommand{\lemmaname}{Lemma}
\providecommand{\propositionname}{Proposition}
\providecommand{\questionname}{Question}
\providecommand{\remarkname}{Remark}
\providecommand{\theoremname}{Theorem}

\begin{document}
\title[Waring's Problem For Locally Nilpotent Groups]{Waring's Problem For Locally Nilpotent Groups: The Case of Discrete
Heisenberg Groups}
\author{Ya-Qing Hu}
\address{Morningside Center of Mathematics\\
Chinese Academy of Sciences\\
No. 55, Zhongguancun East Road\\
Haidian District, Beijing 100190}
\email{\href{mailto:yaqinghu@amss.ac.cn}{yaqinghu@amss.ac.cn}}
\keywords{Waring's problem, polynomial maps, commutative semigroups, locally
nilpotent groups, discrete Heisenberg groups. }
\subjclass[2010]{11P05, 11C08, 20M14, 20F18}
\thanks{This work was partially supported by NSF Grant {[}grant number DMS-1401419{]}
and the Postdoctoral International Exchange Program of the China Postdoctoral
Council {[}grant number YJ20210319{]}. }
\begin{abstract}
Kamke \cite{Kamke1921} solved an analog of Waring's problem with
$n$th powers replaced by integer-valued polynomials. Larsen and Nguyen
\cite{LN2019} explored the view of algebraic groups as a natural
setting for Waring's problem. This paper applies the theory of polynomial
maps and polynomial sequences in locally nilpotent groups developed
in a previous work \cite{Hu2020} to solve an analog of Waring's problem
for the general discrete Heisenberg groups $H_{2n+1}(\mathbb{Z})$
for any integer $n\ge1$. 
\end{abstract}

\date{\today}

\maketitle
\tableofcontents{}

\section{Introduction}

\subsection*{Motivation }

The motivation of this work is the following question of Michael Larsen: 
\begin{question*}
Find good notions of ``polynomial sequence'' and ``generalized cone''
so that if $G$ is a finitely generated nilpotent group and $g_{0},g_{1},g_{2},\ldots$
is a polynomial sequence in $G$ such that no coset of any infinite
index subgroup of $G$ contains the whole sequence, then there exists
a positive integer $M$, a generalized cone $C\subset G$, and a subgroup
$H$ of finite index in $G$ such that every element of $C\cap H$
is a product of $M$ elements of the sequence. 
\end{question*}
A previous work \cite{Hu2020} proposed definitions for polynomial
sequences $g:\mathbb{N}_{0}\to G;i\mapsto g_{i}:=g(i)$ and generalized
cones and proved many desirable formal properties of polynomial sequences
when the target group $G$ is locally nilpotent. The present work
will answer the question in the case of the general discrete Heisenberg
groups $H_{2n+1}(\mathbb{Z})$ for any integer $n\ge1$. 

\subsection*{Background}

Let $\mathbb{N}$ (resp. $\mathbb{N}_{0}$) be the set of positive
(resp. non-negative) integers. In 1909, Hilbert \cite{Hilbert1909}
solved the classical Waring's problem by a difficult combinatorial
argument based on algebraic identities and proved that for each positive
integer $n$, there exists a bounded number $N\in\mathbb{N}$ dependent
only on $n$ such that the following map given by the sum of $n$th
powers of non-negative integers is surjective: 
\[
\mathbb{N}_{0}^{N}\to\mathbb{N}_{0};\quad(x_{1},x_{2},\ldots,x_{N})\mapsto\sum_{i=1}^{N}x_{i}^{n}.
\]
This is known as the Hilbert-Waring theorem. 

On the other hand, various variants of Waring's problem have been
investigated. For example, Kamke \cite{Kamke1921} generalized the
Hilbert-Waring theorem with $x^{n}$ replaced by integer-valued polynomials
$f(x)$ of degree $\ge2$. Wright \cite{Wright1934} studied the easier
Waring's problem, which seeks to determine $v(n)$, the minimum $N$
such that $\mathbb{Z}$ is the union of images of all maps of the
form 
\[
\mathbb{N}_{0}^{N}\to\mathbb{Z};\quad(x_{1},x_{2},\ldots,x_{N})\mapsto\sum_{i=1}^{N}\varepsilon_{i}x_{i}^{n},
\]
for some choices of $\varepsilon_{i}=\pm1$. 

Moreover, the analog of Waring's problem for (nonabelian) groups receives
a great deal of attention in the last 30 years. A typical problem
is to prove that every element in the group $G$ can be expressed
as a short product of values of certain word map
\[
w:\underbrace{G\times G\times\cdots\times G}_{d}\to G,
\]
induced by substitution of a nontrivial group word $w$ in the free
group $F_{d}$ of rank $d$ with elements in the group $G$. 

Recently, Larsen and Nguyen \cite{LN2019} explored the idea of algebraic
groups as a natural setting for Waring's problem. The work on the
polynomial-valued, vector-valued and certain matrix-valued variants
of Waring's problem can naturally fit into this framework. They consider
a morphism of varieties (i.e., reduced separated schemes of finite
type) (resp. schemes) from $\mathbb{A}^{1}$ to an algebraic group
$G$ defined over a field $K$ (resp. a group scheme over a number
ring $\mathcal{O}$).

At the field level, they work in the field of characteristic $0$
and call a subvariety $X$ of the algebraic group $G$ \emph{generating},
if there exists $n\in\mathbb{N}$ such that the product map 
\[
X^{\times n}:=X\times\cdots\times X\to G
\]
is surjective, or equivalently, every generic point of $G$ lies in
the image of this product map, and call a finite collection of morphisms
$f_{i}:X_{i}\to G$ generating if the finite union of Zariski closures
$\overline{f_{i}(X_{i})}$ is generating. 

They are interested in the generating collections of morphisms $f_{i}:\mathbb{A}^{1}\to G$,
and for certain technical reasons, they restrict their attention to
connected unipotent algebraic groups over a nonreal field $K$. (They
call a field $K$ nonreal if $K$ is of characteristic $0$ but not
formally real, i.e., $-1$ is a sum of squares in $K$.) They prove
that for any unipotent algebraic group $G$ over a nonreal field $K$
and a generating set $\{f_{1},\ldots,f_{n}\}$ of $K$-morphisms $\mathbb{A}^{1}\to G$,
there exists some positive integer $M$ such that $\left(f_{1}(K)\cup\cdots\cup f_{n}(K)\right)^{M}=G(K)$;
see \cite[Thm 2.2]{LN2019}. 

At the integral level, they work with the ring $\mathcal{O}$ of integers
of a totally imaginary number field $K$, and a closed $\mathcal{O}$-subscheme
$\mathcal{G}$ of the group scheme $\mathcal{U}_{k}$ of unitary $k\times k$
matrices, and call a set $\{f_{1},\ldots,f_{n}\}$ of $\mathcal{O}$-morphism
$\mathbb{A}^{1}\to\mathcal{G}$ generating if it is generating as
a $K$-morphism. They prove that for any generating set $\{f_{1},\ldots,f_{n}\}$
of $\mathcal{O}$-morphisms $\mathbb{A}^{1}\to\mathcal{G}$, there
exists a positive integer $M$ such that $\left(f_{1}(\mathcal{O})\cup\cdots\cup f_{n}(\mathcal{O})\right)^{M}$
is a subgroup of finite index in $\mathcal{G}(\mathcal{O})$; see
\cite[Thm 3.1]{LN2019}. 

This framework is convenient to work with but also has its own drawbacks.
For example, in the original situation of Waring's problem, namely
for the ring $\mathbb{Z}$, the additive group $\mathbb{G}_{a}$,
and the morphism $f:\mathbb{A}^{1}\to\mathbb{G}_{a}$ given by $f(x)=x^{n}$
for $n\ge2$ even, their results fall short of the Hilbert-Waring
theorem. The difficulty is the ordering of $\mathbb{Z}$, as negative
integers cannot be expressed as sums of positive elements and thus
the assumption of $K$ being totally imaginary plays a vital role
in their work. But this issue can be avoided in the situation of the
easier Waring's problem. Hence, they have to choose between either
working over a totally imaginary number ring or doing the easier Waring
problem on a general number ring. Moreover, a natural question has
been raised in \cite{LN2019}: whether, for unipotent groups over
general number rings $\mathcal{O}$, one can characterize the set
which ought to be expressible as a bounded product of images of the
morphism $f:\mathbb{A}^{1}\to\mathcal{U}_{k}$ over the ring $\mathcal{O}$.

\subsection*{Strategy and main result}

To solve an analog of Waring's problem for nilpotent groups, the original
plan was to do induction over central series of the group and use
Kamke's result as the base case. \cite[Prop 2.4]{LN2019} suggests
a strategy for the induction step, but the ordering on $\mathbb{Z}$
and the lack of inverses make their strategy break down for this work.
In \cite{Hu2020} we propose an alternative strategy called the \emph{iterated
symmetrization}, which enables us to find a symmetric polynomial map
$\mathbb{N}_{0}^{L}\to\mathcal{U}_{n}(\mathbb{Z})$ in $L$ variables,
where the symmetric group $S^{L}$ acts on such polynomial maps by
permuting variables. In particular, we can make the induction method
work and solve an analog of Waring's problem for the general discrete
Heisenberg groups $H_{2n+1}(\mathbb{Z})$. The main result of this
work is the following 
\begin{thm}
\label{thm:WP for Heisenberg group} Let $G$ be the discrete Heisenberg
group $H_{2n+1}(\mathbb{Z})$ for some positive integer $n$ and $g:\mathbb{N}_{0}\to G$
be a polynomial sequence. If the induced polynomial sequence $g\mod N:\mathbb{N}_{0}\to G\twoheadrightarrow G/N$
is non-constant for any normal subgroup $N$ of infinite index in
$G$, then there exists a positive integer $M$, a finite index subgroup
$H$ of $G$, and a proper polynomial set $V$ of $H$ such that every
element in $V$ can be written as a product of at most $M$ elements
in the sequence $g_{0},g_{1},g_{2},\ldots$.
\end{thm}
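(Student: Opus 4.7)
The plan is to use the two-step nilpotent structure of $G = H_{2n+1}(\mathbb{Z})$, whose center $Z = Z(G)$ is infinite cyclic and whose abelianization $G/Z$ is isomorphic to $\mathbb{Z}^{2n}$. The argument factors as an abelian Waring problem for the projected sequence followed by a one-dimensional Waring problem in the central direction. Let $\pi : G \to G/Z \cong \mathbb{Z}^{2n}$ be the quotient map; the composition $\bar g = \pi \circ g$ is a polynomial sequence in $\mathbb{Z}^{2n}$. Every subgroup $K \le \mathbb{Z}^{2n}$ of infinite index pulls back to a subgroup $\pi^{-1}(K) \le G$ which is normal (automatic, since it contains $Z$ and $G/Z$ is abelian) and of infinite index in $G$, so the hypothesis applied to $\pi^{-1}(K)$ forces $\bar g$ to be non-constant modulo every infinite-index subgroup of $\mathbb{Z}^{2n}$. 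The abelian instance of the theorem, a multidimensional Waring--Kamke statement already within reach of the methods of \cite{Hu2020}, then produces an integer $M_1$, a finite-index subgroup $H_0 \le \mathbb{Z}^{2n}$, and a proper polynomial set $V_0 \subset H_0$ such that every element of $V_0$ is a sum of at most $M_1$ entries of $\bar g$.

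To lift this to $G$, I would invoke the iterated symmetrization construction of \cite{Hu2020} to replace the naive $M_1$-fold ordered product by a symmetric polynomial map $S : \mathbb{N}_0^L \to G$, where $L$ depends only on $M_1$ and the image of $S$ consists of $M$-fold products of entries of $g$ for some uniform $M$. Symmetry under $S_L$ forces both the abelianization $\bar S = \pi \circ S$ and the central coordinate $c \circ S$ to be polynomials in the elementary symmetric functions of the $L$ arguments. That $\bar S$ still covers $V_0$ follows by an easy padding argument from the abelian step; the essential new input is that the central coordinate now has the structured symmetric form needed for the fiber analysis.

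The final step is to analyze the central coordinate fiberwise. Fixing an abelian target $\bar h \in V_0$ constrains some of the elementary symmetric functions of the indices but leaves others free, and the polynomial $c \circ S$ restricted to this constrained locus is an integer-valued polynomial in the free symmetric functions. A one-dimensional application of Kamke's theorem, together with Wright's easier-Waring variant to accommodate the fact that $c$ takes values in $\mathbb{Z}$ rather than $\mathbb{N}_0$, then shows that, outside a proper polynomial exceptional subset, every value in $\pi^{-1}(\bar h)$ is realized. Assembling these coverings over $\bar h \in V_0$ produces the proper polynomial set $V \subset H = \pi^{-1}(H_0)$ of the theorem, with $M$ bounded in terms of $M_1$, $L$, and the Kamke exponent for the center polynomial.

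The main obstacle will be the last step: one must verify that, after the abelian target is fixed, the residual symmetric polynomial describing the center is genuinely non-degenerate, and that the fiberwise coverings glue into a proper polynomial set in the sense of \cite{Hu2020} rather than merely into an ad hoc cofinite set. Non-degeneracy must be traced back to the original hypothesis applied to the infinite-index normal subgroup $[G,G] = Z$, and the proper-polynomial-set structure must be transported through the symmetrization, which is where the formalism of \cite{Hu2020} does most of the technical work.
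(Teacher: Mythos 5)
Your high-level plan (project to the abelianization $G/[G,G]\cong\mathbb{Z}^{2n}$, apply the $\mathbb{Z}^m$ result, symmetrize, then handle the central coordinate with Kamke) matches the paper's architecture. But the central step, where you declare that the obstacle will be resolved by ``tracing non-degeneracy back to the original hypothesis applied to the infinite-index normal subgroup $[G,G]=Z$'', does not work, and this is exactly where the paper has to do genuinely new work.

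The issue is that the hypothesis of the theorem constrains essentially only the abelian part. Any infinite-index normal subgroup $N$ containing $Z=[G,G]$ corresponds to an infinite-index sublattice of $\mathbb{Z}^{2n}$, and the non-constancy of $g\bmod N$ for such $N$ is precisely the hypothesis of Theorem \ref{thm:WP for Z^m} applied to $\bar g=(\mathbf{a},\mathbf{b})$; normal subgroups not containing $Z$ yield only weaker constraints. In particular, the hypothesis places no restriction on the ``reduced'' central coordinate $d=c-\tfrac12\mathbf{a}\cdot\mathbf{b}$ beyond what is inherited from $\bar g$. The genuinely degenerate case $d(x)=\mathbf{u}\cdot\mathbf{a}(x)+\mathbf{v}\cdot\mathbf{b}(x)+w$ with $\mathbf{u},\mathbf{v}\in\mathbb{Q}^n$, $w\in\mathbb{Q}$ is fully compatible with the hypothesis, and in that case your fiberwise analysis collapses: after symmetrization, $\tilde g(x_1,\dots,x_L)=\exp\bigl(2\sum_i\log g(x_i)\bigr)$ has central coordinate $\sum_i 2d(x_i)$ plus a term depending only on $\sum_i\mathbf{a}(x_i)$ and $\sum_i\mathbf{b}(x_i)$, and when $d$ is an affine function of $\mathbf{a},\mathbf{b}$ the quantity $\sum_i 2d(x_i)$ is itself a fixed affine function of those abelian sums. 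Once you fix the abelian target $\bar h$, there is no freedom left in the central direction; the fiber is a single point, not something you can cover with a one-dimensional Kamke argument, and there is no proper polynomial ``exceptional'' set to throw away.

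The paper detects this degeneracy globally via the rank of the Jacobian matrix $\mathbf{J}$ of the symmetrized map in the power-sum variables $s_1,\dots,s_B$ (rank $2n$ versus $2n+1$), and when the rank is only $2n$ it invokes Lemma \ref{lem:lem4deg}: one replaces $g(x)$ by a finite product $h(x)=g(a_1x+b_1)\cdots g(a_mx+b_m)$ of affine reparameterizations. The point is that the BCH correction terms $\tfrac12\sum_{i<j}[\log g(a_ix+b),\log g(a_jx+b)]$ feed new, nontrivially varying contributions into the central coordinate, and a polynomial-vanishing argument shows that for suitable $(a_i,b_i)$ this forces $\mathbf{J}(\log h)(0)$ to have full rank $2n+1$, reducing to the non-degenerate case. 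Your proposal has no analogue of this step, so the degenerate case is an actual gap rather than a technicality. As a smaller point, the paper does not invoke Wright's easier-Waring variant anywhere; negativity is sidestepped entirely by working inside a Kamke domain, which is why the conclusion is phrased in terms of a proper polynomial set rather than a covering of all of $\pi^{-1}(\bar h)$.
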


\subsection*{Organization of the paper}

In Section \ref{sec:2 Integer-Valued Polynomials}, we discuss some
basic properties of integer-valued polynomials, especially on the
basis for integer-valued polynomials $\mathbb{N}_{0}\to\mathbb{Z}$,
state two equivalent forms of Kamke's key theorem, briefly summarize
Kamke's generalization of Waring's problem to integer-valued polynomials,
discuss Diophantine Frobenius problem and the greatest common divisor
of polynomial values, answer the question in Proposition \ref{prop:semigroup covered by finitely many sumsets}
whether the commutative semigroup $[f(\mathbb{N}_{0})]$ generated
by the image of an integer-valued polynomial $f:\mathbb{N}_{0}\to\mathbb{Z}$
can be covered by finitely many sumsets $kf(\mathbb{N}_{0})$, and
explain in Proposition \ref{prop:semigroup not finitely covered by sumsets}
why a similar result does not generalize to a vector of polynomials
$f:\mathbb{N}_{0}\to\mathbb{Z}\times\cdots\times\mathbb{Z}$.

 The main result of this paper is given in Section \ref{sec:WP for H_=00007B2n+1=00007D(Z)}.
As a preparation, we generalize Kamke's key theorem and solve Waring's
problem for the abelian group $\mathbb{Z}^{m}$.  The fact that
symmetric polynomials in several variables with rational coefficients
can be written as a polynomial expression with rational coefficients
in the power sum symmetric polynomials enables us to connect the strategy
of iterated symmetrization with Kamke's result. To conquer the degenerate
case, we work with a finite product of affine translations of the
polynomial sequence. 

In section \ref{sec:WP for LNGs}, we discuss Waring's problem for
general locally nilpotent groups.  In particular, we prove a similar
result in Theorem (\ref{thm:WP for deg=00003D1}) for the case when
the degree of the polynomial sequence is exactly $1$ and a primitive
result in Theorem \ref{thm:WP for LNGs} for Waring's problem for
general locally nilpotent groups of degree $\ge2$, which is still
far from the question. 

\subsection*{Acknowledgment}

I thank my advisor Michael Larsen for the guidance and many helpful
discussions through this work. 

\section{Integer-valued Polynomials and Kamke's Generalization \label{sec:2 Integer-Valued Polynomials}}

\subsection{Integer-valued polynomials}

Let $D$ be a domain with quotient field $K$.  For any subset $E$
of $K$, 
\[
\Int(E,D):=\{f\in K[x]\mid f(E)\subseteq D\}
\]
is defined to be the set of $D$-valued polynomials on $E$. Clearly,
$\Int(E,D)$ is a subring of $K[x]$ and $\Int(D):=\Int(D,D)$ is
a $D$-module and a subring of $K[x]$ that contains $D[x]$.  Then
we have 
\begin{prop}
\label{prop:binomial coefficients polynomials} The polynomials $\binom{x}{n}$
with $n\in\mathbb{N}_{0}$ form a basis for the $\mathbb{Z}$-module
$\Int(\mathbb{Z})$.
\end{prop}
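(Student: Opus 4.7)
The plan is to verify linear independence and spanning separately, the former being essentially a matter of degrees and the latter requiring a concrete formula for the coefficients in terms of values of $f$.

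First, I would observe that $\binom{x}{n} = \frac{x(x-1)\cdots(x-n+1)}{n!}$ is a polynomial of exact degree $n$ in $\mathbb{Q}[x]$, so $\{\binom{x}{n}\}_{n \in \mathbb{N}_0}$ is a $\mathbb{Q}$-basis of $\mathbb{Q}[x]$. In particular any $\mathbb{Z}$-linear relation $\sum c_n \binom{x}{n} = 0$ is automatically a $\mathbb{Q}$-linear relation, forcing every $c_n = 0$; so linear independence over $\mathbb{Z}$ is immediate.

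The substantive step is spanning. Given $f \in \Int(\mathbb{Z}) \subseteq \mathbb{Q}[x]$ of degree $d$, I would expand it in the $\mathbb{Q}$-basis as $f(x) = \sum_{n=0}^{d} a_n \binom{x}{n}$ with $a_n \in \mathbb{Q}$, and then recover each $a_n$ as an explicit $\mathbb{Z}$-linear combination of the values $f(0), f(1), \ldots, f(n)$, all of which lie in $\mathbb{Z}$ by the hypothesis $f \in \Int(\mathbb{Z})$. The natural tool is the forward-difference operator $\Delta g(x) := g(x+1) - g(x)$. Using Pascal's identity $\binom{x+1}{n} - \binom{x}{n} = \binom{x}{n-1}$, one checks by induction that $\Delta^k \binom{x}{n}\big|_{x=0} = \delta_{k,n}$, and hence $a_n = \Delta^n f(0)$. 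Expanding $\Delta^n$ combinatorially gives
\[
a_n \;=\; \sum_{k=0}^{n} (-1)^{n-k} \binom{n}{k} f(k),
\]
which is manifestly an integer.

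I do not expect any real obstacle here: once the identities $\Delta \binom{x}{n} = \binom{x}{n-1}$ and $\binom{0}{n} = \delta_{0,n}$ are in hand, the induction that produces the formula for $a_n$ is a routine combinatorial calculation, and integrality of the $a_n$ follows at once from $f(\mathbb{Z}) \subseteq \mathbb{Z}$. The only point worth flagging is that the expansion $f = \sum a_n \binom{x}{n}$ must be justified \emph{before} one knows the $a_n$ are integers; this is legitimate because $\{\binom{x}{n}\}$ is a $\mathbb{Q}$-basis of $\mathbb{Q}[x]$ and $f \in \Int(\mathbb{Z}) \subseteq \mathbb{Q}[x]$. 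With linear independence and this explicit spanning argument combined, $\{\binom{x}{n}\}_{n \in \mathbb{N}_0}$ is a $\mathbb{Z}$-basis of $\Int(\mathbb{Z})$.
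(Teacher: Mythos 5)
Your proof is correct and complete. The paper itself does not prove this statement — it simply cites Cahen--Chabert \cite[Prop I.1.1]{Cahen_Chabert_1997} — and your finite-difference argument (linear independence by degree considerations over $\mathbb{Q}$, plus recovery of coefficients via $a_n = \Delta^n f(0) = \sum_{k=0}^{n}(-1)^{n-k}\binom{n}{k}f(k)$ using $\Delta\binom{x}{n}=\binom{x}{n-1}$) is the standard proof found in that reference. Your flagging of the point that the $\mathbb{Q}$-expansion must be established before integrality of the coefficients is a good observation; the argument as you lay it out is watertight.
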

\begin{proof}
See \cite[Prop I.1.1]{Cahen_Chabert_1997}. 
\end{proof}
\begin{cor}
\label{cor: n+1 consecutive integers} A polynomial $f$ of degree
$n$ is integer-valued if and only if $f$ sends $n+1$ consecutive
integers to integers. In particular, for any $k\in\mathbb{Z}$, we
have 
\[
\Int(\mathbb{Z})=\Int(\mathbb{N}_{0},\mathbb{Z})=\Int(\mathbb{N},\mathbb{Z})=\Int(\mathbb{Z}_{\ge k},\mathbb{Z}).
\]
\end{cor}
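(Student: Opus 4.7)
The plan is to leverage Proposition \ref{prop:binomial coefficients polynomials}, which gives the basis $\{\binom{x}{k}\}_{k\ge 0}$ for the $\mathbb{Z}$-module $\Int(\mathbb{Z})$. One direction is trivial: if $f\in\Int(\mathbb{Z})$, then $f$ certainly maps any $n+1$ consecutive integers to $\mathbb{Z}$. For the nontrivial direction, suppose $f\in\mathbb{Q}[x]$ has degree $n$ and $f(a),f(a+1),\ldots,f(a+n)\in\mathbb{Z}$ for some $a\in\mathbb{Z}$. First I would reduce to $a=0$ by setting $g(x):=f(x+a)$: the polynomial $g$ has the same degree $n$, satisfies $g(0),g(1),\ldots,g(n)\in\mathbb{Z}$, and $g\in\Int(\mathbb{Z})$ will force $f(x)=g(x-a)\in\Int(\mathbb{Z})$ because $\Int(\mathbb{Z})$ is closed under integer translation of the variable.

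Since $\{\binom{x}{k}\}_{0\le k\le n}$ is a $\mathbb{Q}$-basis for the space of polynomials of degree at most $n$ in $\mathbb{Q}[x]$, I write
\[
g(x)=\sum_{k=0}^{n}c_{k}\binom{x}{k},\qquad c_{k}\in\mathbb{Q}.
\]
The crux is the observation that $\binom{j}{k}=0$ for $0\le j<k$ and $\binom{j}{j}=1$, so evaluation at $x=0,1,\ldots,n$ produces a lower-triangular linear system with unit diagonal:
\[
g(j)=\sum_{k=0}^{j}c_{k}\binom{j}{k},\qquad 0\le j\le n.
\]
Since each $g(j)\in\mathbb{Z}$ and every binomial coefficient $\binom{j}{k}$ is an integer, a direct induction on $j$ yields $c_{k}\in\mathbb{Z}$ for all $k$. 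Hence $g$ is an integer combination of the basis elements $\binom{x}{k}$, so Proposition \ref{prop:binomial coefficients polynomials} places $g\in\Int(\mathbb{Z})$, and therefore $f\in\Int(\mathbb{Z})$ as well.

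For the ``in particular'' statement, each of $\mathbb{N}_{0}$, $\mathbb{N}$, and $\mathbb{Z}_{\ge k}$ contains arbitrarily long runs of consecutive integers, and the inclusion $\Int(\mathbb{Z})\subseteq\Int(S,\mathbb{Z})$ is immediate for any $S\subseteq\mathbb{Z}$. Conversely, any $f\in\Int(S,\mathbb{Z})$ of degree $n$ (the constant case is trivial) takes some $n+1$ consecutive integers inside $S$ into $\mathbb{Z}$, so by the main equivalence just proved, $f\in\Int(\mathbb{Z})$. I do not anticipate a real obstacle here: the entire argument hinges on recognizing that Pascal's triangle furnishes a unipotent triangular evaluation matrix, which immediately forces integrality of the coefficients.
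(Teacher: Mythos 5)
Your proof is correct, and it is essentially the standard argument underlying the cited source: the paper defers to Cahen--Chabert \cite[Cor I.1.2]{Cahen_Chabert_1997}, whose proof is likewise driven by Proposition~\ref{prop:binomial coefficients polynomials} together with the observation that evaluating $\sum_{k} c_k\binom{x}{k}$ at $x=0,1,\ldots,n$ yields a unitriangular integer matrix, so that $g(0),\ldots,g(n)\in\mathbb{Z}$ forces $c_k\in\mathbb{Z}$. Your translation to $a=0$ via $g(x)=f(x+a)$, the inductive extraction of the coefficients from the lower-triangular system, and the reduction of the ``in particular'' claim to the existence of $n+1$ consecutive integers in $\mathbb{N}$, $\mathbb{N}_0$, and $\mathbb{Z}_{\ge k}$ are all sound; no gaps.
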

\begin{proof}
See \cite[Cor I.1.2]{Cahen_Chabert_1997}.
\end{proof}
Given any set $P=\{x_{0},x_{2},\ldots,x_{n}\}\subset\mathbb{R}$,
the Lagrange basis polynomials associated with $P$ are 
\[
\ell_{i}(x;P):=\prod_{\substack{0\le k\le n\\
k\ne i
}
}\frac{x-x_{k}}{x_{i}-x_{k}}=\frac{(x-x_{0})}{(x_{i}-x_{0})}\cdots\frac{(x-x_{i-1})}{(x_{i}-x_{i-1})}\frac{(x-x_{i+1})}{(x_{i}-x_{i+1})}\cdots\frac{(x-x_{n})}{(x_{i}-x_{n})}.
\]
Since $x_{i}\in P$ are distinct, $l_{i}(x;P)$ is a polynomial of
degree $n$ such that $l_{i}(x_{j};P)=\delta_{ij}$ is the Kronecker
symbol. Then, any polynomial $f(x)$ of degree $\le n$ can be uniquely
written as 
\[
f(x)=\sum_{i=0}^{n}f(x_{i})l_{i}(x;P).
\]
In particular, for any integer $n\ge0$, let $P_{a,n}=\{a,a+1,\cdots,a+n\}$
be the set of $n+1$ consecutive integers starting from any integer
$a\in\mathbb{Z}$. 
\begin{prop}
\label{prop:Lagrange basis polynomials} The Lagrange basis polynomials
$l_{i}(x;P_{a,n})$ for $0\le i\le n$ form a basis for the $\mathbb{Z}$-submodule
of $\Int(\mathbb{Z})$ consisting of integer-valued polynomials of
degree at most $n$. 
\end{prop}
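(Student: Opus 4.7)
The plan is to verify the three defining properties of a basis for the $\mathbb{Z}$-module in question: the candidate generators lie in the module, they span it over $\mathbb{Z}$, and they are $\mathbb{Z}$-linearly independent. The key observation that drives everything is the Kronecker delta identity $\ell_i(a+j; P_{a,n}) = \delta_{ij}$ noted just above the statement, coupled with Corollary \ref{cor: n+1 consecutive integers}.

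First I would check that each $\ell_i(x; P_{a,n})$ actually belongs to $\Int(\mathbb{Z})$. By construction, $\ell_i(x; P_{a,n})$ is a polynomial of degree exactly $n$, and it takes the values $\delta_{i0}, \delta_{i1}, \ldots, \delta_{in} \in \mathbb{Z}$ at the $n+1$ consecutive integers $a, a+1, \ldots, a+n$. Corollary \ref{cor: n+1 consecutive integers} then immediately forces $\ell_i(x; P_{a,n}) \in \Int(\mathbb{Z})$, and of course the degree is at most $n$.

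Next I would establish spanning. Any polynomial $f \in K[x]$ of degree at most $n$ admits the Lagrange interpolation expansion
\[
f(x) = \sum_{i=0}^{n} f(a+i)\,\ell_i(x; P_{a,n}),
\]
recalled in the paragraph preceding the proposition. If in addition $f \in \Int(\mathbb{Z})$, then each coefficient $f(a+i)$ is an integer, so $f$ lies in the $\mathbb{Z}$-span of $\{\ell_0, \ldots, \ell_n\}$.

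Finally, for $\mathbb{Z}$-linear independence, I would evaluate an arbitrary integer combination $\sum_{i=0}^{n} c_i \ell_i(x; P_{a,n})$ at the points $x = a+j$ for $j = 0, 1, \ldots, n$; the Kronecker delta property reduces each such evaluation to $c_j$, so the combination vanishes identically only if every $c_j = 0$. The only possible obstacle is making sure one invokes Corollary \ref{cor: n+1 consecutive integers} correctly in step one, but since each $\ell_i$ has degree exactly $n$ and $a, a+1, \ldots, a+n$ are precisely $n+1$ consecutive integers, this is immediate and the argument is entirely routine.
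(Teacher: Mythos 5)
Your proof is correct and is essentially the standard argument. The paper itself offers no proof beyond a citation to Hensel (1896) and \cite[Remark I.1.3]{Cahen_Chabert_1997}, and your verification (integer-valuedness of each $\ell_i$ via Corollary \ref{cor: n+1 consecutive integers}, spanning via the Lagrange interpolation expansion, and $\mathbb{Z}$-linear independence via the Kronecker delta evaluations at $a, a+1, \ldots, a+n$) is exactly the argument those references supply.
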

\begin{proof}
See \cite{Hensel1896} or \cite[Remark I.1.3]{Cahen_Chabert_1997}.
\end{proof}

\subsection{Kamke's key theorem and Kamke's generalization}

 According to Kamke \cite{Kamke1921}, the question about the simultaneous
decomposition of integers into powers of integers was raised by Hilbert
in a seminar. More precisely, Hilbert asked, for a given integer $n\ge2$,
under what as few restrictions on the positive integers $s_{1},\ldots,s_{n}$
as possible, there is a positive integer $N=N(n)$, such that for
each $n$ positive integers $s_{1},\ldots,s_{n}$, subject to those
restrictions, the system of equations 
\[
s_{1}=\sum_{\kappa=1}^{N}x_{\kappa},\quad s_{2}=\sum_{\kappa=1}^{N}x_{\kappa}^{2},\quad\ldots,\quad s_{n}=\sum_{\kappa=1}^{N}x_{\kappa}^{n}
\]
can be simultaneously solved by integers $x_{\kappa}\ge0$. To answer
this question and solve Waring's problem in integer-valued polynomials,
Kamke proved a theorem in two equivalent versions, which he called
Kernsatz. In the sequel, we will refer to it as Kamke's key theorem.

\begin{thm*}[Kamke's key theorem]
 For each integer $n\ge2$, there is an integer $N=N(n)>0$, an integer
$A>0$ and positive numbers $i_{1}$ and $i_{\nu}$, $J_{\nu}$ with
\[
0<i_{\nu}<J_{\nu}\qquad(\nu=2,3,\ldots,n),
\]
such that for each $n$ integers $s_{1},s_{2},\ldots,s_{n}$, divisible
by $A$ and subject to the following conditions 
\[
i_{1}<s_{1};\quad i_{\nu}s_{1}^{\nu}<s_{\nu}<J_{\nu}s_{1}^{\nu}\qquad(\nu=2,3,\ldots,n),
\]
the $n$ equations 
\[
s_{\nu}=\sum_{\kappa=1}^{N}x_{\kappa}^{\nu}\qquad(\nu=1,2,\ldots,n)
\]
are simultaneously solvable by integers $x_{\kappa}\ge0$. 
\end{thm*}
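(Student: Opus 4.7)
The plan is to proceed by induction on the degree $n$, with base case $n=1$ essentially trivial: any positive integer $s_1$ equals $s_1 + 0 + \cdots + 0$, a sum of $N$ non-negative integers for any $N \ge 1$, so one may take $A = 1$ and $i_1 = 0$. For the inductive step from $n-1$ to $n$, the first move is a shift substitution $x_\kappa = a + y_\kappa$ with common shift $a$ of order $s_1/N$, which converts the system into
\[
s_\nu = \sum_{j=0}^{\nu} \binom{\nu}{j} a^{\nu-j} T_j, \qquad T_j := \sum_{\kappa=1}^{N} y_\kappa^j.
\]
Since this linear system in $(T_0, T_1, \ldots, T_n)$ is triangular, each $T_j$ can be written as an explicit polynomial in $s_1, \ldots, s_j$ and $a$. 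The hypothesis $i_\nu s_1^\nu < s_\nu < J_\nu s_1^\nu$ then translates into controlled bounds $|T_j| \le C_j s_1^j$, while enlarging the denominator $A$ suitably forces every $T_j$ to be an integer with prescribed divisibility. The problem thus reduces to realizing a prescribed tuple $(T_1, \ldots, T_n)$ of power sums by integer corrections $y_\kappa \ge -a$.

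The next ingredient is Hilbert's algebraic identity: for every $k \ge 1$ there exist positive rationals $\alpha_1, \ldots, \alpha_r$ and integer vectors $(b_{j,1}, \ldots, b_{j,5}) \in \mathbb{Z}^5$ with
\[
(x_1^2 + x_2^2 + x_3^2 + x_4^2 + x_5^2)^k = \sum_{j=1}^{r} \alpha_j \bigl(b_{j,1} x_1 + \cdots + b_{j,5} x_5\bigr)^{2k}.
\]
This identity, together with its odd-exponent companions used in Hilbert's original treatment of Waring's problem, furnishes a rich supply of integer tuples expressible as sums of a bounded number of $n$-th powers. I would use it to pin down the top power sum $T_n$ and then appeal to the inductive hypothesis on the residual data $(T_1, \ldots, T_{n-1})$, using the power-sum-to-elementary-symmetric conversion to guarantee that the $y_\kappa$ we produce are actual integers (rather than merely algebraic numbers with the prescribed power sums).

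The hard part will be the feedback between degrees: any modification of the $y_\kappa$ aimed at adjusting $T_n$ perturbs each lower $T_j$ by a quantity that is typically of the same order of magnitude as the target value of $T_j$ itself, so a naive iteration cannot converge. My proposed resolution is to stratify the $N$ variables into disjoint blocks, dedicating one block to each degree, and to choose the orders of magnitude so that corrections made at a lower-degree block are negligible compared with the higher-degree values already secured in separate blocks. The conditions $i_\nu s_1^\nu < s_\nu < J_\nu s_1^\nu$ and the divisibility $A \mid s_\nu$ are exactly what is needed to keep every block's sub-problem inside the admissible window of the inductive hypothesis and to ensure that the integrality of the solutions is preserved; verifying this bookkeeping, and making the block sizes grow in just the right way with $n$, is the crux of the argument.
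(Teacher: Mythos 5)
The paper does not prove Kamke's key theorem; it is cited from Kamke's 1921 paper \cite{Kamke1921}, with only the explicit constants for the case $n = 2$ recorded in an example. There is therefore no in-paper proof to compare against, and your sketch is best read as an attempted reconstruction of Kamke's argument. As such it collects the right raw materials: the common shift $x_\kappa = a + y_\kappa$, the triangular reduction to the power sums $T_j$, Hilbert's algebraic identity, and an inductive frame.

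The genuine gap is at the step you yourself defer. Realizing the top power sum $T_n$ without disturbing $T_1, \ldots, T_{n-1}$ is not bookkeeping: it is the theorem, and the proposed remedy (a magnitude hierarchy across blocks, so that low-degree corrections are ``negligible'' at higher degrees) does not meet it. These are exact Diophantine equations; a perturbation that is small relative to $T_j$ still changes $T_j$, so an asymptotic notion of ``negligible'' buys nothing. A correct argument must supply an explicit slack mechanism -- for instance a reserve of variables contributing zero or one to the lower power sums, or a final reduction to representing a single residual integer as a bounded sum of $n$-th powers in a prescribed window -- and must then verify that the Kamke inequalities $i_\nu s_1^\nu < s_\nu < J_\nu s_1^\nu$, together with the divisibility $A \mid s_\nu$, keep that residual in the representable range. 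Your sketch asserts these conditions ``are exactly what is needed'' without demonstrating it, and never confronts the exactness issue, so as written it does not prove the theorem.
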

 We will refer to these conditions in Kamke's key theorem as Kamke
conditions and to the following unbounded open subsets as Kamke domains:
\begin{align*}
U(n,N) & =\{(s_{1},\cdots,s_{n})\in\mathbb{R}_{\ge0}^{n}\mid i_{1}<s_{1},i_{\nu}s_{1}^{v}<s_{v}<J_{v}s_{1}^{v},\nu=2,3,\ldots,n\}.
\end{align*}

\begin{example}
\label{exa:Kamke:n=00003D2} For $n=2$, Kamke's key theorem with
$N=5$, $A=2$, $i_{1}=7$, $i_{2}=\dfrac{1}{4}$, $J_{2}=\dfrac{1}{3}-\varepsilon$,
($0<\varepsilon<\dfrac{1}{12}$), is contained in \cite[Thm, page 2]{Kamke1921}. 
\end{example}
With the help of the key theorem, Kamke proved the following 
\begin{thm*}
Let $f$ be an integer-valued polynomial of degree $\ge2$ with a
positive leading coefficient. Then, there is an integer $N>0$ and
for each integer $Z\ge0$ there are integers \textup{$N'\ge0$, $N''\ge0$,
}$x_{1}\ge0,\ldots,x_{N'}\ge0$, such that $N'+N''\le N$ and 
\begin{align*}
Z & =\sum_{\kappa=1}^{N'}f(x_{\kappa})+N'';
\end{align*}
i.e., each integer $Z\ge0$ is decomposable into a bounded number
of elements in $f(\mathbb{N}_{0})$ with the addition of a bounded
number of units. 
\end{thm*}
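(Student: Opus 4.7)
The plan is to reduce the question to Kamke's key theorem by expanding $f(x)=\sum_{\nu=0}^n c_\nu x^\nu$ with $c_\nu\in\mathbb{Q}$ and $c_n>0$, and observing that for any tuple $x_1,\ldots,x_{N'}\in\mathbb{N}_0$,
\[
\sum_{\kappa=1}^{N'} f(x_\kappa)=\sum_{\nu=0}^n c_\nu s_\nu,\qquad s_\nu:=\sum_{\kappa=1}^{N'}x_\kappa^\nu\ (\nu\ge 1),\ s_0:=N'.
\]
Thus producing a decomposition of $Z$ amounts to finding an admissible tuple $(s_1,\ldots,s_n)$ for which $\sum_\nu c_\nu s_\nu$ lies in $\{Z-N'',\dots,Z\}$ for some bounded $N''\ge 0$, and then invoking the key theorem to realize this tuple as power sums of $N'$ non-negative integers.

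First, I enlarge the divisor of Kamke's key theorem from $A$ to $A':=A\cdot\lcm(\text{denominators of }c_0,\ldots,c_n)$; requiring $A'\mid s_\nu$ only strengthens the Kamke hypothesis (since $A'\mathbb{Z}\subset A\mathbb{Z}$), and it guarantees that each $c_\nu s_\nu$ is an integer. Now, given $Z$ sufficiently large, I select $s_1$ as a multiple of $A'$ of size $\Theta(Z^{1/n})$ so that $Z$ lies well inside the interval $(c_n i_n s_1^n,\,c_n J_n s_1^n)$; since these intervals have length $\Theta(s_1^n)$ while their endpoints shift by $O(s_1^{n-1})$ when $s_1$ steps by $A'$, such a choice always exists. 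I then fix $s_2,\ldots,s_{n-1}$ as any multiples of $A'$ lying in their respective Kamke intervals, set $R:=c_0 N'+\sum_{\nu=1}^{n-1}c_\nu s_\nu$, and choose $s_n$ to be the largest multiple of $A'$ in $(i_n s_1^n,\,J_n s_1^n)$ for which $c_n s_n+R\le Z$. The positivity of $c_n$ is essential here, since it is what allows $c_n s_n$ to be made arbitrarily large and positive. By construction, $N'':=Z-c_n s_n-R$ is a non-negative integer strictly less than $c_n A'$, a bound independent of $Z$.

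Applying Kamke's key theorem to the admissible tuple $(s_1,\ldots,s_n)$ now produces $x_1,\ldots,x_{N'}\ge 0$ (with $N'$ the constant from the key theorem) such that $s_\nu=\sum_\kappa x_\kappa^\nu$, yielding $Z=\sum_{\kappa=1}^{N'} f(x_\kappa)+N''$. The finitely many remaining small values of $Z$ are handled trivially by writing each as a sum of $1$'s, which is absorbed by taking the final constant $N$ of the target theorem to be $\max(N'+\lceil c_n A'\rceil,\,Z_0)$ for a suitable cutoff $Z_0$.

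The main obstacle is the simultaneous control in the second step: producing $(s_1,\ldots,s_n)\in(A'\mathbb{Z})^n$ that satisfy both the Kamke interval constraints and the approximation $\sum_\nu c_\nu s_\nu\in[Z-c_n A',\,Z]$. The argument succeeds because, once $s_1$ is fixed so that $Z$ falls inside $(c_n i_n s_1^n,\,c_n J_n s_1^n)$ with room to spare, the remaining freedom in $s_n$ provides an arithmetic progression of length $\Theta(s_1^n)$ with common difference $c_n A'=O(1)$, whose contribution to the total sum therefore sweeps through every integer in a neighborhood of $Z$.
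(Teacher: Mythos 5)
Your proof is correct and rests on the same engine the paper relies on, namely Kamke's key theorem applied to the power-sum tuple $(s_1,\ldots,s_n)$, but you organize the derivation differently. The paper does not actually carry out this construction: it states "Kamke's non-explicit proposition" (that all sufficiently large multiples of some $B$ are exact sums of $N'$ values of $f$), attributes its proof to Kamke's original paper, and then deduces the theorem by division with remainder, absorbing the residue $b\in\{0,\ldots,B-1\}$ into $N''$. You instead go straight from the key theorem to the theorem: you pick $s_1$ so that $Z$ sits comfortably inside the rescaled Kamke window $(c_n i_n s_1^n, c_n J_n s_1^n)$, fix $s_2,\ldots,s_{n-1}$ arbitrarily subject to the Kamke and divisibility constraints (their contribution $R$ is $O(Z^{(n-1)/n})$, hence negligible), and then tune $s_n$ in steps of $A'$ to bring $\sum_\nu c_\nu s_\nu$ within $c_n A'$ of $Z$ from below. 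This gives a complete self-contained argument where the paper outsources the core step, and it streamlines the target: rather than proving exact representability of multiples of $B$ and then dividing, you aim at $Z$ directly and let the bounded remainder $N''$ absorb the $O(1)$ slack from rounding $s_n$ to a multiple of $A'$.

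One small point that deserves to be made explicit: after choosing $s_n$ as the largest admissible multiple of $A'$ with $c_n s_n + R \le Z$, the bound $N''< c_n A'$ requires that $s_n + A'$ still lies in the Kamke interval $(i_n s_1^n, J_n s_1^n)$, so that the maximality of $s_n$ forces $c_n(s_n+A')+R>Z$. This is where "well inside" does real work: if you choose $s_1$ so that $Z/c_n\in\big((i_n+\delta)s_1^n,(J_n-\delta)s_1^n\big)$ for a fixed $\delta>0$, then the maximal admissible $s_n$ cannot be within $A'$ of the upper endpoint (else $c_n s_n+R$ would already exceed $Z$ for $Z$ large, since $|R|=o(s_1^n)$), and the interval-overlap argument still produces such an $s_1$ among multiples of $A'$. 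You gesture at this but it is worth spelling out, since without it the claimed bound on $N''$ could fail at the top of the Kamke range.
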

\begin{rem*}
Kamke's original hypotheses include that $f(x)\ge0$ for each integer
$x\ge0$ and coefficients of $f$ are integers, both of which can
be removed, because if the leading coefficient is greater than $0$,
one always has $f(x)\ge0$ for sufficiently large $x\ge B$, so instead
of $f(x)$ one may consider its translation $f(x+B)$, and Proposition
\ref{prop:binomial coefficients polynomials} guarantees that the
coefficients are rational, and by multiplying with the least common
denominator $A$, one could work with $Af(x)$ instead of $f(x)$.

Notice that the theorem is trivial if $f(x)=kx+b$ with $k\in\mathbb{N}$
and $b\in\mathbb{Z}$. Indeed, for each integer $Z\ge0$ such that
$Z\ge\min\{f(\mathbb{N}_{0})\}$, there is an $x\in\mathbb{N}_{0}$
such that $f(x)\le Z<f(x+1)$.
\end{rem*}
In fact, Kamke's theorem follows easily from the following proposition,
which can be deduced from Kamke's key theorem. This was proved in
Kamke's paper but not stated explicitly. In the sequel, we will refer
to it as Kamke's non-explicit proposition. 
\begin{prop*}
Under Kamke's hypotheses, there exist $B,N'\in\mathbb{N}$ such every
sufficiently large number in $B\mathbb{N}$ can be written as a sum
of exactly $N'$ numbers in $f(\mathbb{N}_{0})$.
\end{prop*}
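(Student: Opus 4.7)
The plan is to convert the problem into a linear Diophantine question about power sums and then apply Kamke's key theorem. By the Remark, we may assume $f(x) = \sum_{j=0}^{n} a_j x^j \in \mathbb{Z}[x]$ with $a_n > 0$ and $f(\mathbb{N}_0) \subseteq \mathbb{N}_0$, where $n = \deg f \geq 2$. Expansion gives the identity
$$\sum_{\kappa=1}^{N'} f(x_\kappa) = a_0 N' + \sum_{j=1}^{n} a_j s_j, \qquad s_j := \sum_{\kappa=1}^{N'} x_\kappa^j,$$
reducing the problem to finding power sums $(s_1, \ldots, s_n)$ that realize a prescribed target $Z$. Applying Kamke's key theorem with exponent $n$ yields constants $N$, $A$, and $i_1$, $i_\nu, J_\nu$ ($2 \leq \nu \leq n$) such that every tuple $(s_1, \ldots, s_n) \in (A\mathbb{Z})^n \cap U(n, N)$ arises as the power sums of some $N$ non-negative integers. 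For any $N' \geq N$, padding with $x_{N+1} = \cdots = x_{N'} = 0$ extends the realization while adding $(N'-N) a_0$ to the target.

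Let $d := \gcd(a_1, \ldots, a_n)$ and set $B := A d$. The argument has two ingredients, a containment and a coverage. For the containment: every achievable $Z = a_0 N' + \sum_{j=1}^{n} a_j s_j$ satisfies $a_j s_j \in A d \mathbb{Z}$ for $j \geq 1$ (since $A \mid s_j$ and $d \mid a_j$), whence $Z \in a_0 N' + A d \mathbb{Z}$. For the coverage: fix a large $s_1 = A m_1 \in A\mathbb{Z}$, and let each $s_j = A m_j$ for $j \geq 2$ range over multiples of $A$ in the Kamke interval $(i_j s_1^j, J_j s_1^j)$. By a Frobenius / Chicken McNugget style estimate, since the widths $(J_j - i_j) s_1^j / A$ grow polynomially in $s_1$, the sum $\sum_{j=2}^{n} a_j m_j$ covers every multiple of $\gcd(a_2, \ldots, a_n)$ in an interval of length $\Omega(s_1^n)$. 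Letting $m_1$ vary in addition, and noting that successive Kamke ranges overlap for large $s_1$ (because progression-length $\Omega(s_1^n)$ dominates shift magnitude $O(s_1^{n-1})$), the achievable values of $\sum_{j=1}^{n} a_j s_j$ cover every multiple of $A d$ beyond some $f$-dependent threshold.

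Finally, choose $N' \geq N$ with $(A d / \gcd(a_0, A d)) \mid N'$, so that $a_0 N' \equiv 0 \pmod{A d}$; then the coset $a_0 N' + A d \mathbb{Z}$ coincides with $A d \mathbb{Z} = B\mathbb{Z}$, and every sufficiently large element of $B\mathbb{N}$ is an achievable target, yielding the proposition. The main obstacle will be the coverage claim in the previous paragraph, namely the Frobenius-type estimate: one must verify that for $s_1$ sufficiently large, the weighted Minkowski sum of the $m_j$-intervals hits every multiple of $\gcd(a_2, \ldots, a_n)$ in its range, and that incorporating the $m_1$-variation then fills out every multiple of $d$ in the full range with no residue gaps. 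This should follow from standard estimates once the interval widths dominate the $a_j$'s, but it is the most delicate step, requiring careful bookkeeping of overlap between the intervals produced by consecutive choices of $s_1$.
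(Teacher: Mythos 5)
The paper does not supply a proof of this proposition: it attributes the statement to Kamke's 1921 paper and remarks only that it ``can be deduced from Kamke's key theorem.'' So there is no internal argument to compare against, and your proposal has to stand on its own.

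Your strategy is the natural deduction and its skeleton is correct. After the reduction, sanctioned by the paper's Remark, to $f\in\mathbb{Z}[x]$ with $a_n>0$, $n\ge 2$, and $f(\mathbb{N}_0)\subseteq\mathbb{N}_0$, the identity $\sum_{\kappa} f(x_\kappa) = a_0 N' + \sum_{j\ge 1} a_j s_j$ converts the problem into hitting targets with a fixed linear form in the power sums; Kamke's key theorem supplies exactly the points $(s_1,\dots,s_n)\in U(n,N)\cap(A\mathbb{Z})^n$ realizable as power sums of $N$ (hence, after zero-padding, of any $N'\ge N$) nonnegative integers; $B=Ad$ with $d=\gcd(a_1,\dots,a_n)$ is the right modulus, since every achievable $\sum_{j\ge 1} a_j s_j$ is a multiple of $Ad$; and forcing $Ad\mid a_0 N'$ by the choice of $N'$ closes the congruence.

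The one place the proposal falls short of a proof is the coverage claim, which you correctly flag as the crux but leave as a sketch, and the one tool you cite does not quite do the job. The Frobenius/Chicken McNugget theorem concerns $\mathbb{N}_0$-cones of a fixed generating set and says nothing about variables constrained to bounded intervals. What you actually need is a box-covering lemma: if, for $j=2,\dots,n$, the integer $m_j$ ranges over an interval of length $\ell_j$ with each $\ell_j$ exceeding $|a_n/d'|$ where $d'=\gcd(a_2,\dots,a_n)$, then $\sum_{j\ge 2}a_j m_j$ hits every multiple of $d'$ in a middle interval of length comparable to $|a_n|\ell_n$. That lemma is elementary (choose $t_j\in\{0,\dots,|a_n/d'|-1\}$ for $j<n$ to fix the residue modulo $a_n/d'$, then solve for $t_n$), but it should be stated and proved, and the proof must accommodate negative $a_j$ for $1\le j<n$, which can occur even though $a_n>0$ and $f(\mathbb{N}_0)\subseteq\mathbb{N}_0$. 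The final stitching step also needs to be written out: that for a large target $T\in d\mathbb{Z}$, some $m_1$ in the $\Omega(T^{1/n})$-long window of admissible values satisfies $a_1 m_1\equiv T\pmod{d'}$, which is where $\gcd(a_1,d')=d$ enters and where the overlap between consecutive Kamke ranges is used. Neither issue is a flaw in the approach, but both must be filled in before the argument is complete.
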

Thus, there exist positive integers $B,K,N'$, such that for all positive
integers $\zeta\ge K$, we have 
\[
B\zeta=\sum_{\kappa=1}^{N'}f(x_{\kappa}).
\]
Therefore, for all integers $Z\ge BK$, we have $Z=\sum_{\kappa=1}^{N'}f(x_{\kappa})+b$
with $0\le b<B$. Setting $N=BK+B+N'$, for each integer $Z>0$, we
obtain the following representation with $N''\ge0$:
\[
Z=\sum_{\kappa=1}^{N'}f(x_{\kappa})+N''\text{ with }N'+N''\le N.
\]

\subsection*{The semigroup generated by polynomial values}

The sumset $A+B$ (also known as the Minkowski sum) of two subsets
$A$ and $B$ of an ambient abelian group $(G,+)$ is defined to be
the set of all sums of one element from $A$ with the other element
from $B$.  Denote by $kA=A+A+\cdots+A$ the $k$-fold iterated sumset
of $A$ and denote by $[A]$ the commutative semigroup generated by
$A$. 

It is curious to know under what assumption on an integer-valued polynomial
$f:\mathbb{N}_{0}\to\mathbb{Z}$, the commutative semigroup $[f(\mathbb{N}_{0})]$
can be covered by finitely many sumsets $kf(\mathbb{N}_{0})$. What
if $f=(f_{1},\ldots,f_{n}):\mathbb{N}_{0}\to\mathbb{Z}^{n}$ is a
vector of integer-valued polynomials $f_{i}:\mathbb{N}_{0}\to\mathbb{Z}$?
Answers to these questions are the starting point of this work. 

Clearly, we have the following ascending chain of unions of sumsets
\begin{equation}
A\subseteq\bigcup_{k=1}^{2}kA\subseteq\cdots\subseteq\bigcup_{k=1}^{m}kA\subseteq\cdots\subseteq[A].\label{eq:chain}
\end{equation}
If it does not stabilize for $m$ large enough, then $\bigcup_{k=1}^{m}kA$
and thus $kA$ for all $1\le k\le m$ are always proper subsets of
$[A]$. Further, if $A$ contains the zero element $0$ of $G$, then
$[A]=\bigcup_{k=1}^{\infty}kA$ is the commutative monoid generated
by $A$ and $kA\subseteq(k+1)A$ for any $k\in\mathbb{N}$ and (\ref{eq:chain})
becomes 
\begin{equation}
\{0\}\subseteq A\subseteq2A\subseteq\cdots\subseteq kA\subseteq\cdots\subseteq[A].\label{eq:chain 0}
\end{equation}
In this case, $[A]=kA$ for some $k\in\mathbb{N}$ if and only if
the ascending chain stabilizes for $k$ large enough.

Notice that $1$ (resp. $1$ and $0$, resp. $1$ and $-1$) can be
represented by a sum of finitely many numbers in $f(\mathbb{N}_{0})$
if and only if $\mathbb{N}\subseteq[f(\mathbb{N}_{0})]$ (resp. $\mathbb{N}_{0}\subseteq[f(\mathbb{N}_{0})]$,
resp. $\mathbb{Z}\subseteq[f(\mathbb{N}_{0})]$), and if this is the
case, then Kamke's result implies that each integer $Z>0$ (resp.
$Z\ge0$, resp. $Z\ge0$) is decomposable into a bounded number of
numbers in $f(\mathbb{N}_{0})$. Therefore, we have the following 
\begin{cor}[of Kamke's theorem]
\label{cor:of Kamke's theorem}  If $f:\mathbb{N}_{0}\to\mathbb{Z}$
is a polynomial of degree $n\ge1$ with a positive leading coefficient,
and $\mathbb{N}\subseteq[f(\mathbb{N}_{0})]$ (resp. $\mathbb{N}_{0}\subseteq[f(\mathbb{N}_{0})]$),
then each $Z>0$ (resp. $Z\ge0$) is decomposable into a bounded number
of elements in $f(\mathbb{N}_{0})$, that is, there exists a uniform
$N\in\mathbb{N}$ such that $\mathbb{N}\subseteq\bigcup_{k=1}^{N}kf(\mathbb{N}_{0})$
(resp. $\mathbb{N}_{0}\subseteq Nf(\mathbb{N}_{0})$). 
\end{cor}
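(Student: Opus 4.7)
The plan is to reduce both statements to Kamke's theorem stated just above, using the hypothesis to absorb the leftover ``unit'' terms into $f$-values. First I would invoke Kamke's theorem to obtain a fixed $N_{0}\in\mathbb{N}$ such that every integer $Z>0$ admits a representation
\[
Z=\sum_{\kappa=1}^{N'}f(x_{\kappa})+N''
\]
with $x_{\kappa}\in\mathbb{N}_{0}$ and nonnegative integers $N',N''$ satisfying $N'+N''\le N_{0}$. The only obstruction to decomposing $Z$ as a sum of a bounded number of $f$-values is then the residual count $N''$ of units.

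For the first case, the hypothesis $\mathbb{N}\subseteq[f(\mathbb{N}_{0})]$ supplies a fixed expansion $1=\sum_{i=1}^{m_{1}}f(y_{i}^{(1)})$ for some $m_{1}\in\mathbb{N}$ and $y_{i}^{(1)}\in\mathbb{N}_{0}$. Substituting this into each of the $N''$ units of the Kamke decomposition represents $Z$ as a sum of $N'+m_{1}N''\le m_{1}(N'+N'')\le m_{1}N_{0}$ elements of $f(\mathbb{N}_{0})$, so $\mathbb{N}\subseteq\bigcup_{k=1}^{N}kf(\mathbb{N}_{0})$ with $N:=m_{1}N_{0}$.

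For the second case, $\mathbb{N}_{0}\subseteq[f(\mathbb{N}_{0})]$ additionally supplies a representation $0=\sum_{i=1}^{m_{0}}f(y_{i}^{(0)})$, hence $0\in m_{0}f(\mathbb{N}_{0})$. Enlarging $N$ to $\max(m_{0},m_{1}N_{0})$ covers $Z=0$ as well, giving $\mathbb{N}_{0}\subseteq\bigcup_{k=1}^{N}kf(\mathbb{N}_{0})$. Moreover, $0\in m_{0}f(\mathbb{N}_{0})$ lets one pad any representation by the $m_{0}$-tuple $\{f(y_{i}^{(0)})\}$ without changing the value, shifting the summand count by $m_{0}$. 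The main---and essentially only delicate---step is the bookkeeping needed to pass from ``at most $N$ summands'' to ``exactly $N$ summands'' as required by the formulation $\mathbb{N}_{0}\subseteq Nf(\mathbb{N}_{0})$: this is automatic once $0\in f(\mathbb{N}_{0})$, since then the chain $\{kf(\mathbb{N}_{0})\}_{k\ge1}$ is ascending and $\bigcup_{k=1}^{N}kf(\mathbb{N}_{0})=Nf(\mathbb{N}_{0})$; in the general case it requires exploiting the numerical semigroup $S=\{k\ge1:0\in kf(\mathbb{N}_{0})\}$, possibly together with several independent $0$-representations, to match residues modulo $\gcd(S)$ for every Kamke-decomposition that arises.
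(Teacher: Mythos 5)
Your substitution argument (plugging a fixed expansion of $1$, and of $0$ when available, into the $N''$ residual units in Kamke's decomposition) is precisely the approach the paper takes; the paper's own ``proof'' is only the inline remark preceding the corollary, and your handling of the two $\bigcup_{k=1}^{N}kf(\mathbb{N}_{0})$ covers is correct and, if anything, more explicit about the bound $N=m_{1}N_{0}$.

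Where you are right to hedge is the final formalization $\mathbb{N}_{0}\subseteq Nf(\mathbb{N}_{0})$ with \emph{exactly} $N$ summands. Your instinct --- that one would have to pad with representations of $0$ and reconcile residues modulo $\gcd S$ where $S=\{k\ge1:0\in kf(\mathbb{N}_{0})\}$ --- correctly identifies the obstruction, but it cannot in general be overcome, so this part of the corollary as literally stated is false. Take $f(x)=2x^{2}-1$: this has degree $2$ and positive leading coefficient, and since $f(0)=-1$ and $f(1)=1$ one has $\mathbb{Z}\subseteq[f(\mathbb{N}_{0})]$, so the hypothesis $\mathbb{N}_{0}\subseteq[f(\mathbb{N}_{0})]$ holds. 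Yet every value of $f$ is odd, so every element of $kf(\mathbb{N}_{0})$ has the same parity as $k$; consequently $0$ lies only in $kf(\mathbb{N}_{0})$ with $k$ even and $1$ only with $k$ odd, and no single $N$ can have $\{0,1\}\subseteq Nf(\mathbb{N}_{0})$. Here $\gcd S=2$ and the residues genuinely cannot be matched. The ``exactly $N$'' conclusion is valid only under an extra assumption such as $0\in f(\mathbb{N}_{0})$ (which makes the chain $kf(\mathbb{N}_{0})$ ascending, as you note); the paper's proof-by-remark never addresses this, so the imprecision originates there rather than in your write-up. Your proof of the $\bigcup$ versions is complete and correct, and you should simply state outright (with the counterexample above or one like it) that the exact-$N$ refinement needs the additional hypothesis $0\in f(\mathbb{N}_{0})$ rather than leave it as ``possibly'' recoverable bookkeeping.
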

The following proposition characterizes integer-valued polynomials
$f:\mathbb{N}_{0}\to\mathbb{Z}$ such that the commutative semigroup
$[f(\mathbb{N}_{0})]$ can be covered by finitely many sumsets $kf(\mathbb{N}_{0})$. 
\begin{prop}
\label{prop:semigroup covered by finitely many sumsets} Let $f:\mathbb{N}_{0}\to\mathbb{Z}$
be an integer-valued polynomial of degree $d$. Then, we have 
\[
[f(\mathbb{N}_{0})]=\bigcup_{k=1}^{N}kf(\mathbb{N}_{0}),\text{ for some }N=N(f)\in\mathbb{N},
\]
if and only if either $d=-\infty$, i.e., $f=0$, or $d\ge1$ and
either $f(\mathbb{N}_{0})\subseteq\mathbb{N}_{0}$ or $f(\mathbb{N}_{0})\subseteq-\mathbb{N}_{0}$. 
\end{prop}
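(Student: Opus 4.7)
My plan is to prove both directions; for $(\Rightarrow)$ I argue by contrapositive, and for $(\Leftarrow)$ I reduce to Kamke's non-explicit proposition after normalizing so that $\gcd f(\mathbb{N}_{0})=1$. Starting with $(\Rightarrow)$: if $d=0$ and $f\equiv c\ne 0$, then $[f(\mathbb{N}_{0})]=\{c,2c,3c,\ldots\}$ is infinite but $\bigcup_{k=1}^{N}kf(\mathbb{N}_{0})=\{c,2c,\ldots,Nc\}$ is finite, contradiction. If $d\ge 1$ and $f(\mathbb{N}_{0})$ contains values of both signs, then the leading term of $f$ forces one of $\inf f(\mathbb{N}_{0})$, $\sup f(\mathbb{N}_{0})$ to be $\pm\infty$ while the other is a finite extremum $m$ of the opposite sign; say $m<0$ (the other case is symmetric). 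Then $[f(\mathbb{N}_{0})]\supseteq\{nm:n\ge 1\}$ is unbounded below, whereas every element of $\bigcup_{k=1}^{N}kf(\mathbb{N}_{0})$ is $\ge Nm$, a contradiction.

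For $(\Leftarrow)$ the case $f=0$ is trivial; assume $d\ge 1$ and, replacing $f$ by $-f$ if needed, $f(\mathbb{N}_{0})\subseteq\mathbb{N}_{0}$, so the leading coefficient is positive. Setting $d_{0}:=\gcd f(\mathbb{N}_{0})$ and replacing $f$ by $f/d_{0}$ reduces to the case $\gcd f(\mathbb{N}_{0})=1$, since $[f(\mathbb{N}_{0})]$ and each $kf(\mathbb{N}_{0})$ both scale by $d_{0}$. A classical Sylvester--Frobenius argument, applied to a finite subset of $f(\mathbb{N}_{0})$ whose gcd is $1$, then gives $K_{0}\in\mathbb{N}$ with $\mathbb{N}_{\ge K_{0}}\subseteq[f(\mathbb{N}_{0})]$, and the finitely many elements of $[f(\mathbb{N}_{0})]\cap[1,K_{0}-1]$ are each a sum of at most $K_{1}$ values of $f$ for some fixed $K_{1}$. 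When $d=1$, say $f(x)=ax+b$ with $a>0$ and $\gcd(a,b)=1$, a direct calculation gives $[f(\mathbb{N}_{0})]=\bigcup_{k=1}^{a}(kb+a\mathbb{N}_{0})$, because $(k+a)b+a\mathbb{N}_{0}\subseteq kb+a\mathbb{N}_{0}$. When $d\ge 2$, I invoke Kamke's non-explicit proposition (already proved in the excerpt) to obtain $B,N'\in\mathbb{N}$ such that every sufficiently large element of $B\mathbb{N}$ lies in $N'f(\mathbb{N}_{0})$. Since $[f(\mathbb{N}_{0})]$ contains every sufficiently large positive integer, it meets every residue class modulo $B$; fix representatives $a_{0},\ldots,a_{B-1}\in[f(\mathbb{N}_{0})]$ with $a_{r}\equiv r\pmod{B}$, each written as a sum of at most $C$ values of $f$. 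For large $Z\equiv r\pmod{B}$ the difference $Z-a_{r}$ is a large multiple of $B$ and hence lies in $N'f(\mathbb{N}_{0})$, so $Z$ is a sum of at most $C+N'$ values of $f$. Taking $N:=\max(K_{1},C+N')$ yields the desired equality.

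The main obstacle is producing the uniform summand bound across all residue classes modulo $B$; this is precisely where the normalization $\gcd f(\mathbb{N}_{0})=1$ is essential, since without it certain residue classes would never be represented in $[f(\mathbb{N}_{0})]$ and the coset reduction to Kamke's proposition would collapse. Everything else reduces to bookkeeping: gcd scaling to pass to the primitive case, Sylvester--Frobenius to dispose of small values, and a separate elementary computation for the linear case where Kamke's hypothesis $d\ge 2$ does not apply.
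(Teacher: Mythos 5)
Your proof is correct and follows essentially the same strategy as the paper's: reduce to $f(\mathbb{N}_0)\subseteq\mathbb{N}_0$ with $\gcd f(\mathbb{N}_0)=1$, handle $d=1$ elementarily, and invoke Kamke's non-explicit proposition for $d\ge 2$. Where you diverge, the divergence is useful. Your necessity argument is a cleanly separated contrapositive, contrasting the unboundedness below of $[f(\mathbb{N}_0)]$ (from $nm$ with $m<0$) against the lower bound $Nm$ valid for every element of $\bigcup_{k=1}^{N}kf(\mathbb{N}_0)$; the paper instead absorbs necessity into the case analysis by noting $B\mathbb{N}\subset[f(\mathbb{N}_0)]$ when the minimum $B$ is negative. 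Your $d=1$ identity $[f(\mathbb{N}_0)]=\bigcup_{k=1}^{a}(kb+a\mathbb{N}_0)$, justified by $(k+a)b+a\mathbb{N}_0\subseteq kb+a\mathbb{N}_0$, is a one-line computation that subsumes the paper's separate subcases ($a=1$, $b=0$, $b=1$, and the Frobenius-number argument for the generic $a,b$). One small bookkeeping remark: your final bound $N=\max(K_1,C+N')$ covers elements of $[f(\mathbb{N}_0)]$ below $K_0$ (via $K_1$) and elements $Z$ large enough that $Z-a_r$ clears Kamke's threshold (via $C+N'$), but the finitely many elements of $[f(\mathbb{N}_0)]$ in the intermediate range $[K_0,\,a_r+\text{Kamke threshold})$ must also be absorbed into $K_1$ before taking the maximum; this is trivial but should be stated.
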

To prove this proposition, we need to know when the greatest common
divisor of $f(\mathbb{N}_{0})$ is $1$. The condition $\gcd f(\mathbb{N}_{0})=1$
is equivalent to that for each prime number $p$, there exists $n\in\mathbb{N}_{0}$,
dependent on $p$, such that $f(n)\not\equiv0\pmod p$. Since $f(n)\equiv f(n\mod p)\mod p$,
we can choose a sufficiently large $n$ among the equivalence class
$n\mod p$. Hence, we have 
\begin{lem}
For any integer-valued polynomial $f\in\Int(\mathbb{Z})$, the following
conditions are all equivalent: 
\[
\gcd f(\mathbb{Z})=1\Leftrightarrow\gcd f(\mathbb{N}_{0})=1\Leftrightarrow\gcd f(\mathbb{N})=1\Leftrightarrow\gcd f(\mathbb{Z}_{\ge k})\text{ for any }k\in\mathbb{Z}.
\]
\end{lem}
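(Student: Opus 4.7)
My proof plan is to reduce the four-way equivalence to one substantive assertion that I can attack via the finite-difference structure of integer-valued polynomials. One direction is painless: for any inclusion $S \subseteq T$ of subsets of $\mathbb{Z}$, $\gcd f(T)$ divides $\gcd f(S)$, so $\gcd f(S) = 1$ implies $\gcd f(T) = 1$. This handles half of each equivalence, since $\mathbb{N} \subseteq \mathbb{N}_{0} \subseteq \mathbb{Z}$, while $\mathbb{Z}_{\ge k}$ lies inside $\mathbb{N}$ for $k \ge 1$ and contains $\mathbb{N}_{0}$ for $k \le 0$.

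The substantive content is therefore: if $\gcd f(\mathbb{Z}) = 1$, then $\gcd f(\mathbb{Z}_{\ge k}) = 1$ for every $k \in \mathbb{Z}$. Equivalently, for every prime $p$, whenever there exists some $n_{0} \in \mathbb{Z}$ with $f(n_{0}) \not\equiv 0 \pmod{p}$, I must also produce such an $n_{0}$ inside $\mathbb{Z}_{\ge k}$ for arbitrary $k$.

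To do this I would invoke the linear recurrence of order $d+1$ satisfied by any integer-valued polynomial of degree $d$. By Proposition \ref{prop:binomial coefficients polynomials} we may write $f(x) = \sum_{i=0}^{d} a_{i}\binom{x}{i}$ with $a_{i} \in \mathbb{Z}$, whence $\Delta^{d+1}f \equiv 0$ yields the integer relation
\[
\sum_{i=0}^{d+1}(-1)^{d+1-i}\binom{d+1}{i}f(x+i) = 0 \qquad (x \in \mathbb{Z}).
\]
The coefficients at $i = 0$ and $i = d+1$ are $\pm 1$, so this may be solved for $f(x)$ as an integer combination of $f(x+1),\dots,f(x+d+1)$, or for $f(x+d+1)$ in terms of $f(x),\dots,f(x+d)$. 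Now suppose, for contradiction, that $f(n) \equiv 0 \pmod{p}$ for every $n \ge k$. If $n_{0} \ge k$, the contradiction is immediate. Otherwise $n_{0} < k$, and a descending induction seeded by the $d+1$ consecutive vanishing values $f(k),f(k+1),\dots,f(k+d)$ propagates the congruence $f(n) \equiv 0 \pmod{p}$ down to $n = n_{0}$, again contradicting $f(n_{0}) \not\equiv 0 \pmod{p}$.

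I expect no substantial obstacle: the entire argument hinges on the integrality of $\binom{d+1}{i}$ and on the invertibility of the endpoint coefficients, both trivial. One subtlety worth flagging is that the heuristic appearing in the paragraph preceding the lemma, namely $f(n) \equiv f(n \bmod p) \pmod{p}$, is not actually valid for integer-valued polynomials in general (consider $\binom{x}{2}$ at $x = 0$ versus $x = 2$, modulo $2$); the finite-difference formulation above avoids this pitfall and in fact yields the stronger uniform statement that vanishing of $f$ modulo $p$ on any $d+1$ consecutive integers already forces vanishing on all of $\mathbb{Z}$.
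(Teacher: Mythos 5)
Your proof is correct, and it takes a different and more careful route than the paper's. The paper's argument, given in the paragraph immediately preceding the lemma, rests on the congruence $f(n)\equiv f(n\bmod p)\pmod p$ and uses it to replace any witness $n$ with a larger representative in the same residue class modulo $p$. As you correctly observe, that congruence is false for general integer-valued polynomials: with $f=\binom{x}{2}$ and $p=2$ one has $f(0)=0$ yet $f(2)=1$, so $f$ is not $p$-periodic modulo $p$. Your finite-difference argument sidesteps this entirely: the identity $\sum_{i=0}^{d+1}(-1)^{d+1-i}\binom{d+1}{i}f(x+i)=0$ has coefficients $\pm1$ at $i=0$ and $i=d+1$, so the sequence $(f(n)\bmod p)_{n\in\mathbb{Z}}$ is determined in both directions by any window of $d+1$ consecutive values; this gives both the descending induction you run and the uniform statement that $f\equiv 0\pmod p$ on $d+1$ consecutive integers forces $f\equiv 0\pmod p$ on all of $\mathbb{Z}$, which is essentially Lemma~\ref{lem:gcd formula Lagrange} re-derived via finite differences rather than Lagrange interpolation. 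A correct version of the paper's own idea would replace $p$ by the actual period of $f$ modulo $p$ (a power of $p$ in general), whose existence again follows from the two-sided recurrence you wrote down; your formulation is cleaner because it never needs to name that period. Your handling of the easy inclusions is also correct, and the split of $\mathbb{Z}_{\ge k}$ into the cases $k\ge1$ and $k\le0$ is exactly what is needed to reduce everything to the single implication $\gcd f(\mathbb{Z})=1\Rightarrow\gcd f(\mathbb{Z}_{\ge k})=1$.
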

Furthermore, the following three lemmas give shortcuts to verify these
conditions:
\begin{lem}
\label{lem:gcd formula binomial} By Proposition \ref{prop:binomial coefficients polynomials},
any integer-valued polynomial $f(x)$ of degree $d\ge0$ can be written
as 
\[
f(x)=a_{d}\binom{x}{d}+\cdots+a_{1}\binom{x}{1}+a_{0}\binom{x}{0}
\]
with $a_{i}\in\mathbb{Z}$ for all $i=0,\cdots,d$. Then, for any
$k\in\mathbb{Z}$, we have 
\[
\gcd f(\mathbb{Z})=\gcd f(\mathbb{N}_{0})=\gcd f(\mathbb{N})=\gcd f(\mathbb{Z}_{\ge k})=\gcd\{a_{0},a_{1},\cdots,a_{d}\}.
\]
\end{lem}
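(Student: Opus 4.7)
The plan is to prove the equality $\gcd\{a_0,a_1,\ldots,a_d\}=\gcd f(\mathbb{N}_0)$, since the preceding lemma has already identified the four gcds on the left. Both divisibility directions reduce to the fact that the change of basis between the monomial/binomial representation of $f$ and its values on consecutive integers is given by an integral matrix.

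First I would establish the easy direction $\gcd\{a_0,\ldots,a_d\}\mid \gcd f(\mathbb{N}_0)$. For every $n\in\mathbb{N}_0$, the identity $f(n)=\sum_{i=0}^{d}a_i\binom{n}{i}$ exhibits $f(n)$ as a $\mathbb{Z}$-linear combination of the $a_i$, because $\binom{n}{i}\in\mathbb{Z}$. Hence any common divisor of the $a_i$ divides each $f(n)$, and therefore divides $\gcd f(\mathbb{N}_0)$.

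For the reverse direction $\gcd f(\mathbb{N}_0)\mid \gcd\{a_0,\ldots,a_d\}$, I would invert the triangular relation $f(n)=\sum_{i=0}^{n}a_i\binom{n}{i}$ for $n=0,1,\ldots,d$. The clean way is to use the forward difference operator $\Delta$ and the identity $a_i=(\Delta^i f)(0)=\sum_{j=0}^{i}(-1)^{i-j}\binom{i}{j}f(j)$, which shows that each $a_i$ is an explicit integer linear combination of $f(0),f(1),\ldots,f(i)$. Alternatively one can argue by induction on $i$: $a_0=f(0)$, and $a_i=f(i)-\sum_{j<i}a_j\binom{i}{j}$ once $a_0,\ldots,a_{i-1}$ are known. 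Either way, any common divisor of $f(\mathbb{N}_0)$ divides each $a_i$.

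There is essentially no obstacle here; the whole content is that the binomial basis matrix $\bigl(\binom{n}{i}\bigr)_{0\le n,i\le d}$ is unimodular over $\mathbb{Z}$ (in fact lower triangular with $1$'s on the diagonal), so the $a_i$ and the values $f(0),\ldots,f(d)$ generate the same ideal in $\mathbb{Z}$. Combining both directions with the previously stated lemma that all four gcds over $\mathbb{Z}$, $\mathbb{N}_0$, $\mathbb{N}$, and $\mathbb{Z}_{\ge k}$ coincide gives the full chain of equalities claimed.
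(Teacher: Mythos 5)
Your argument is correct and is exactly the ``trivial'' argument the paper intends: the binomial basis matrix $\bigl(\binom{n}{i}\bigr)_{0\le n,i\le d}$ is lower unitriangular over $\mathbb{Z}$, so the ideal generated by $a_0,\ldots,a_d$ equals the ideal generated by $f(0),\ldots,f(d)$. One tiny imprecision: the preceding lemma only asserts that the four gcds are simultaneously equal to $1$, not that they coincide in general; but as you implicitly note, dividing $f$ by $\gcd f(\mathbb{Z})$ reduces the general case to that one, or alternatively your finite-difference argument applied at starting point $k$ instead of $0$ handles $\gcd f(\mathbb{Z}_{\ge k})$ directly.
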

\begin{proof}
Trivial. 
\end{proof}
\begin{lem}
\label{lem:gcd formula Lagrange} Let $P_{a,d}=\{a,a+1,\cdots,a+d\}$
be the set of $d+1$ consecutive integers starting from any integer
$a\in\mathbb{Z}$. For any integer-valued polynomial $f(x)$ of degree
$d\ge0$ and any $k\in\mathbb{Z}$, we have 
\[
\gcd f(\mathbb{Z})=\gcd f(\mathbb{N}_{0})=\gcd f(\mathbb{N})=\gcd f(\mathbb{Z}_{\ge k})=\gcd f(P_{a,d}).
\]
\end{lem}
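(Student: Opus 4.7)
The plan is to reduce the statement to a single equality. The previous lemma (in the excerpt, the chain $\gcd f(\mathbb{Z})=\gcd f(\mathbb{N}_0)=\gcd f(\mathbb{N})=\gcd f(\mathbb{Z}_{\ge k})$) already takes care of the first four terms, so it suffices to prove that this common value equals $\gcd f(P_{a,d})$. Since $P_{a,d}\subseteq\mathbb{Z}$, we trivially get $\gcd f(\mathbb{Z})\mid\gcd f(P_{a,d})$, and the entire content is the reverse divisibility $\gcd f(P_{a,d})\mid f(n)$ for every $n\in\mathbb{Z}$.

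For the reverse direction, I would invoke Proposition \ref{prop:Lagrange basis polynomials}: because $f$ is integer-valued of degree at most $d$, it lies in the $\mathbb{Z}$-submodule of $\Int(\mathbb{Z})$ with basis $\{\ell_i(x;P_{a,d})\mid 0\le i\le d\}$. Evaluating the usual Lagrange interpolation formula at the nodes $a,a+1,\ldots,a+d$ shows that the coefficients of $f$ in this basis are precisely the values $f(a+i)$, so
\[
f(x)=\sum_{i=0}^{d} f(a+i)\,\ell_i(x;P_{a,d}).
\]
Crucially, each $\ell_i(x;P_{a,d})$ is itself integer-valued (since it belongs to the $\mathbb{Z}$-basis of $\Int(\mathbb{Z})_{\le d}$), so for every $n\in\mathbb{Z}$ the value $\ell_i(n;P_{a,d})$ is an integer.

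Thus for any $n\in\mathbb{Z}$, $f(n)$ is a $\mathbb{Z}$-linear combination of $f(a),f(a+1),\ldots,f(a+d)$, which forces $\gcd f(P_{a,d})\mid f(n)$. Taking the gcd over all $n\in\mathbb{Z}$ gives $\gcd f(P_{a,d})\mid\gcd f(\mathbb{Z})$, and combined with the trivial direction we conclude $\gcd f(\mathbb{Z})=\gcd f(P_{a,d})$, completing the chain. There is no real obstacle here; the only subtlety is to remember that the integrality of the Lagrange basis polynomials $\ell_i(x;P_{a,d})$ is not automatic from their definition but is guaranteed by Proposition \ref{prop:Lagrange basis polynomials}, and this is exactly what makes the $\mathbb{Z}$-linear combination above valid at an arbitrary integer point.
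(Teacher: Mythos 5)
Your proof is correct and takes essentially the same route as the paper, which simply cites Proposition~\ref{prop:Lagrange basis polynomials}; you have unpacked that one-line citation into the two divisibilities, using the Lagrange interpolation formula and the integrality of the $\ell_i(x;P_{a,d})$ guaranteed by that proposition.
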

\begin{proof}
By Proposition \ref{prop:Lagrange basis polynomials}. 
\end{proof}
\begin{lem}
\label{lem:gcd formula via CRT} For any integer-valued polynomial
$f(x)$ of degree $d\ge0$ and any $k\in\mathbb{Z}$, we have 
\[
\gcd f(\mathbb{Z})=\gcd f(\mathbb{N}_{0})=\gcd f(\mathbb{N})=\gcd f(\mathbb{Z}_{\ge k})=1,
\]
if and only if there exist $m_{1},m_{2}\in\mathbb{Z}_{\ge k}$ such
that $\gcd\{f(m_{1}),f(m_{2})\}=1$. 
\end{lem}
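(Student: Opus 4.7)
The statement has a trivial direction and a substantive direction; I would tackle them separately.

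For the implication $(\Leftarrow)$, there is nothing to do beyond a one-line observation: if some pair $m_{1},m_{2}\in\mathbb{Z}_{\ge k}$ satisfies $\gcd\{f(m_{1}),f(m_{2})\}=1$, then $\gcd f(\mathbb{Z}_{\ge k})$ divides both $f(m_{1})$ and $f(m_{2})$, hence divides $1$; the remaining equalities then follow from the previous lemma (or from Lemma \ref{lem:gcd formula binomial}).

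For the substantive direction $(\Rightarrow)$, the plan is to fix one value $f(m_{1})$ and then produce $m_{2}$ coprime to it by a Chinese Remainder Theorem argument. First, since $\gcd f(\mathbb{Z}_{\ge k})=1$, the polynomial $f$ is not identically zero, so $f$ has only finitely many roots; hence I can pick $m_{1}\in\mathbb{Z}_{\ge k}$ with $f(m_{1})\ne 0$. Let $p_{1},\ldots,p_{r}$ be the distinct prime divisors of $|f(m_{1})|$. By hypothesis, for each $i$ there exists $a_{i}\in\mathbb{Z}_{\ge k}$ with $p_{i}\nmid f(a_{i})$; and since $f(n)\equiv f(n')\pmod{p_{i}}$ whenever $n\equiv n'\pmod{p_{i}}$, the condition $p_{i}\nmid f(n)$ depends only on $n\bmod p_{i}$. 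By CRT, the congruence system $m_{2}\equiv a_{i}\pmod{p_{i}}$ ($1\le i\le r$) has solutions forming a full arithmetic progression modulo $p_{1}\cdots p_{r}$, so I can choose a representative $m_{2}\in\mathbb{Z}_{\ge k}$ (in fact, arbitrarily large). For this choice, $p_{i}\nmid f(m_{2})$ for every $i$, so $f(m_{2})$ shares no prime factor with $f(m_{1})$, i.e., $\gcd\{f(m_{1}),f(m_{2})\}=1$.

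There is no real obstacle here beyond bookkeeping: the only mild point is ensuring $m_{2}\ge k$, handled automatically since the CRT solution set is an infinite arithmetic progression, and the observation that congruence of arguments modulo $p_{i}$ transfers to congruence of $f$-values modulo $p_{i}$ (a standard property of integer-valued polynomials, e.g.\ from the basis representation in Proposition \ref{prop:binomial coefficients polynomials}). The whole proof is three or four lines once the framework is laid out.
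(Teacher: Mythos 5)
Your strategy — fix one nonzero value $f(m_{1})$ and use CRT to build a second value coprime to it — is the approach the paper has in mind (its proof is the single phrase ``By Chinese remainder theorem''), and the $(\Leftarrow)$ direction is fine. However, the $(\Rightarrow)$ direction contains a genuine gap: the claim that $f(n)\equiv f(n')\pmod{p_{i}}$ whenever $n\equiv n'\pmod{p_{i}}$, which you call ``a standard property of integer-valued polynomials,'' is \emph{false} in general. It is a standard property of polynomials with \emph{integer} coefficients, but integer-valued polynomials can have non-integral coefficients. Concretely, take $f(x)=\binom{x}{2}$ and $p=2$: then $0\equiv 2\pmod 2$ but $f(0)=0$ while $f(2)=1$, so $f(0)\not\equiv f(2)\pmod 2$. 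Thus ``$p_{i}\nmid f(n)$ depends only on $n\bmod p_{i}$'' is not true, and the congruence system modulo $p_{1}\cdots p_{r}$ you write down need not preserve the non-divisibility conditions.

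The fix is small but necessary. The correct periodicity statement is: an integer-valued polynomial $f$ of degree $d$ satisfies $f(n+p^{e})\equiv f(n)\pmod p$ for all $n$, for any $e$ with $p^{e}>d$. (Expand in the binomial basis via Proposition \ref{prop:binomial coefficients polynomials}; by Vandermonde, $\binom{x+p^{e}}{j}=\sum_{i=0}^{j}\binom{p^{e}}{i}\binom{x}{j-i}$, and $p\mid\binom{p^{e}}{i}$ for $1\le i\le j<p^{e}$.) So instead of solving $m_{2}\equiv a_{i}\pmod{p_{i}}$, solve $m_{2}\equiv a_{i}\pmod{p_{i}^{e_{i}}}$ with $p_{i}^{e_{i}}>d$. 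The moduli $p_{i}^{e_{i}}$ are still pairwise coprime, so CRT applies, the solution set is an infinite arithmetic progression (allowing $m_{2}\ge k$), and for any such $m_{2}$ one has $f(m_{2})\equiv f(a_{i})\not\equiv 0\pmod{p_{i}}$ for each $i$, giving $\gcd\{f(m_{1}),f(m_{2})\}=1$. With that replacement, your argument is correct and coincides with the paper's intended proof.
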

\begin{proof}
By Chinese remainder theorem. 
\end{proof}
For any finite number of vectors $v_{1},v_{2},\ldots,v_{n}$ in $\mathbb{R}^{N}$,
a vector of the form $\alpha_{1}v_{1}+\alpha_{2}v_{2}+\cdots+\alpha_{n}v_{n}$
with $\alpha_{i}\in\mathbb{R}_{\ge0}$ is called a conical sum of
$v_{1},v_{2},\ldots,v_{n}$, and is called an integer conical sum
if in addition $\alpha_{i}\in\mathbb{N}_{0}$.  Given a set $S=\{a_{1},a_{2},\ldots,a_{n}\}$
of positive integers such that $\gcd S=1$, the Diophantine Frobenius
problem asks to determine the largest integer that cannot be expressed
as an integer conical sum of these numbers. This integer, usually
denoted by $g(a_{1},a_{2},\ldots,a_{n})$ or $g(S)$, is called the
\emph{Frobenius number} of the set $S$.  The following folk result
(see \cite[Thm 1.0.1]{Alfonsin2005}) guarantees the finiteness of
this number:
\begin{thm*}
Given a set $S=\{a_{1},a_{2},\ldots,a_{n}\}$ of positive integers,
if $\gcd S=1$, then there exists an integer $N$ such that any integer
$s\ge N$ is representable as an integer conical sum of $a_{1},\ldots,a_{n}$.
\end{thm*}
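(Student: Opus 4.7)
The plan is to reduce the theorem to the elementary observation that a finite abelian group coincides with the monoid generated by any of its group-theoretic generating sets. First I would note that since $\gcd(a_{1},a_{2},\ldots,a_{n})=1$, setting $d=\gcd(a_{2},\ldots,a_{n})$ gives $\gcd(d,a_{1})=1$, so the residue $\bar{d}$ is a unit in $\mathbb{Z}/a_{1}\mathbb{Z}$; hence the residues $\bar{a}_{2},\ldots,\bar{a}_{n}$ generate the cyclic group $\mathbb{Z}/a_{1}\mathbb{Z}$ as a group. Because this group is finite, the inverse of any generator $h$ is the positive power $h^{\mathrm{ord}(h)-1}$, so $\bar{a}_{2},\ldots,\bar{a}_{n}$ also generate $\mathbb{Z}/a_{1}\mathbb{Z}$ as a monoid.

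Next I would exploit this to cover every residue class modulo $a_{1}$ by a nonnegative integer combination of $a_{2},\ldots,a_{n}$ alone: for each $r\in\{0,1,\ldots,a_{1}-1\}$ pick $x_{2}^{(r)},\ldots,x_{n}^{(r)}\in\mathbb{N}_{0}$ with
\[
m_{r}:=\sum_{i=2}^{n}x_{i}^{(r)}a_{i}\equiv r\pmod{a_{1}},
\]
and set $N:=\max_{0\le r<a_{1}}m_{r}$. For any integer $s\ge N$, the division algorithm gives $s=qa_{1}+r$ with $0\le r<a_{1}$. Then $s-m_{r}$ is nonnegative (since $s\ge N\ge m_{r}$) and divisible by $a_{1}$ (since $s\equiv r\equiv m_{r}\pmod{a_{1}}$), so writing the quotient as $k\in\mathbb{N}_{0}$ produces the desired integer conical representation
\[
s=ka_{1}+\sum_{i=2}^{n}x_{i}^{(r)}a_{i}.
\]

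The only conceptual ingredient is the first step, the passage from group-theoretic to monoidal generation in $\mathbb{Z}/a_{1}\mathbb{Z}$, and I do not expect it to pose any genuine obstacle since the group is finite cyclic; everything after that is bookkeeping via division by $a_{1}$. As an alternative packaging, one could start from a B\'ezout identity $\sum b_{i}a_{i}=1$ with $b_{i}\in\mathbb{Z}$, multiply by $r$ to represent each residue class, and then absorb any negative coefficient by incrementing the $a_{1}$-coefficient; this yields essentially the same argument and the same bound $N=O(\max|a_{i}|\cdot a_{1})$.
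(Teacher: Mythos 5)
Your proof is correct. The paper does not prove this statement at all; it is quoted as a folk result with a citation to Ram\'{\i}rez Alfons\'{\i}n's book, so there is no ``paper proof'' to compare against. Your argument is the standard one: working modulo $a_1$, you show each residue class is reachable by a nonnegative combination of $a_2,\ldots,a_n$ (the passage from group-generation to monoid-generation in the finite cyclic group $\mathbb{Z}/a_1\mathbb{Z}$ is the right observation, and your identification of $\gcd(d,a_1)=1$ with $d=\gcd(a_2,\ldots,a_n)$ being a unit is sound), and then for $s\ge N:=\max_r m_r$ you subtract off the representative $m_r$ of the class of $s$ and absorb the remaining nonnegative multiple of $a_1$. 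This is essentially the Ap\'ery-set argument; the B\'ezout-plus-absorption variant you sketch at the end is the other classical route and gives a comparable explicit bound. Either version would serve as a complete proof where the paper contents itself with a reference.
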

 Now we are ready to prove Proposition \ref{prop:semigroup covered by finitely many sumsets}. 
\begin{proof}
If $d=-\infty$, i.e., $f=0$, then $N=1$ and $[f(\mathbb{N}_{0})]=f(\mathbb{N}_{0})$;
if $d=0$, i.e., $f(\mathbb{N}_{0})=\{a\}$ for some $0\ne a\in\mathbb{Z}$,
then $[f(\mathbb{N}_{0})]=a\mathbb{N}$ cannot be covered by finitely
many sumsets. 

Without loss of generality, we may assume that $f:\mathbb{N}_{0}\to\mathbb{Z}$
is a polynomial of degree $d\ge1$ with a positive leading coefficient.
There exists $b\in\mathbb{N}$ such that $f(x)>0$ for all $x>b$,
so $f(x)$ achieves its minimum value $B\in\mathbb{Z}$ in $[0,b]\cap\mathbb{N}_{0}$.
If $B<0$, then $B\mathbb{N}\subset[f(\mathbb{N}_{0})]$ cannot be
covered by finitely many sumsets. So we only need to consider the
case when $B\ge0$, i.e., when $f(\mathbb{N}_{0})\subseteq\mathbb{N}_{0}$.
If necessary, we can multiply $f$ by a nonzero rational number and
assume that $\gcd f(\mathbb{N}_{0})=1$. By the discussion above,
the condition on $f$ is necessary, and it suffices to prove the assertion
when $d\ge1$ such that $f(\mathbb{N}_{0})\subseteq\mathbb{N}_{0}$
and $\gcd f(\mathbb{N}_{0})=1$. 

If $d=1$, then by Lemma \ref{lem:gcd formula binomial} or \ref{lem:gcd formula Lagrange},
we must have $f(x)=ax+b$, where $a\in\mathbb{N}$ and $b\in\mathbb{N}_{0}$
and $\gcd\{b,a+b\}=\gcd f(\mathbb{N}_{0})=1$. If $a=1$, then $[f(\mathbb{N}_{0})]=\mathbb{Z}_{\ge b}=f(\mathbb{N}_{0})$
and $N=1$. If $b=0$, then $a=1$, thus $[f(\mathbb{N}_{0})]=\mathbb{N}_{0}=f(\mathbb{N}_{0})$
and $N=1$. If $b=1$, then $[f(\mathbb{N}_{0})]=\mathbb{N}=\bigcup_{k=1}^{a}kf(\mathbb{N}_{0})$
and $N=a$. So we may assume $a\ne1$ and $b\ne0,1$. 

Each integer greater than the Frobenius number $g(a+b,b)$ belongs
to the semigroup $[f(\mathbb{N}_{0})]$, since $a+b,b\in f(\mathbb{N}_{0})$.
Then, from each equivalence class $i+a$ of $\mathbb{Z}/a$, where
$i\in\{0,1,\ldots,a-1\}$, we can pick the smallest integer $Z_{i}\in[f(\mathbb{N}_{0})]$.
Then, let $N_{i}$ be the smallest number such that each $Z_{i}$
is decomposed into the sum of exactly $N_{i}$ elements of $f(\mathbb{N}_{0})$.
Let $M=\max\{Z_{i}\mid0\le i\le a-1\}$. Then, for any $Z\ge M+b$,
there exists exactly one way to represent 
\[
Z=aK+b+Z_{i},\text{ for some }i.
\]
From $aK+b+Z_{i}\ge M+b$, we obtain $aK\ge M-Z_{i}\ge0$ and thus
$K\ge0$. Hence, $Z$ can be decomposed into the sum of exactly $1+N_{i}$
elements of $f(\mathbb{N}_{0})$. Let $N'$ be the least integer such
that each $Z<M+b$ in $[f(\mathbb{N}_{0})]$ can be decomposed into
the sum of at most $N'$ elements of $f(\mathbb{N}_{0})$. Then, we
can choose $N=\max\{N',1+N_{i}\mid i=0,1,\ldots,a-1\}$. 

For $d\ge2$, a similar proof follows from Kamke's non-explicit proposition.
\end{proof}
 But in general Proposition \ref{prop:semigroup covered by finitely many sumsets}
does not generalize to a vector of polynomials $f:\mathbb{N}_{0}\to\mathbb{Z}\oplus\cdots\oplus\mathbb{Z}$.
The failure has to do with the boundary of the convex cone spanned
by vectors of the polynomial values. Here is an example when it happens.

\begin{example*}
Consider the following example 
\[
f:\mathbb{N}_{0}\to\mathbb{Z}\oplus\mathbb{Z}\oplus\cdots\oplus\mathbb{Z};\quad x\mapsto(x,x^{2},\ldots,x^{n}),
\]
which is a vector of polynomials of degree $\le n$. Since $f(0)=(0,0,\ldots,0)$,
we have the following ascending chain of sumsets 
\[
f(\mathbb{N}_{0})\subseteq2f(\mathbb{N}_{0})\subseteq\cdots\subseteq kf(\mathbb{N}_{0})\subseteq\cdots\subseteq[f(\mathbb{N}_{0})].
\]
But this chain does not stabilize for $k$ large enough.  Indeed,
we have 
\[
(N+1,N+1,\ldots,N+1)\in(N+1)f(\mathbb{N}_{0}),
\]
which can only be the sum of $(1,1,\ldots,1)$'s and $(0,0,\ldots,0)$'s,
and we need at least $(N+1)$ $(1,1,\ldots,1)$'s. Hence, $kf(\mathbb{N}_{0})$
is always a proper subset of $[f(\mathbb{N}_{0})]$ for any $k\in\mathbb{N}$. 
\end{example*}
More generally, motivated by the above example, we have the following
result. 
\begin{prop}
\label{prop:semigroup not finitely covered by sumsets} Consider the
following map of a vector of polynomials 
\[
f:\mathbb{N}_{0}\to\mathbb{Z}\oplus\mathbb{Z}\oplus\cdots\oplus\mathbb{Z};\quad x\mapsto(f_{1}(x),f_{2}(x),\ldots,f_{n}(x)),
\]
where at least two $f_{i}:\mathbb{N}_{0}\to\mathbb{Z}$ have degree
$\ge0$ and at least two $f_{j}:\mathbb{N}_{0}\to\mathbb{Z}$ are
not proportional to each other. Then, for any $m\in\mathbb{N}$, $\bigcup_{k=1}^{m}kf(\mathbb{N}_{0})$
and thus $kf(\mathbb{N}_{0})$ for all $1\le k\le m$ are always proper
subsets of $[f(\mathbb{N}_{0})]$.
\end{prop}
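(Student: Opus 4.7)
The plan is to reduce to two coordinates and then exhibit a single element of $[f(\mathbb{N}_{0})]$ whose decomposition into summands from $f(\mathbb{N}_{0})$ requires an unbounded number of terms. By hypothesis we may choose two coordinates that are nonzero and nonproportional; relabeling, call them $f_{1},f_{2}$ and let $\pi\colon \mathbb{Z}^{n}\to\mathbb{Z}^{2}$ be the corresponding projection. Since $\pi$ is a semigroup homomorphism, $\pi(kf(\mathbb{N}_{0}))=k(\pi\circ f)(\mathbb{N}_{0})$ and $\pi([f(\mathbb{N}_{0})])=[(\pi\circ f)(\mathbb{N}_{0})]$; thus if $\bigcup_{k=1}^{m}kf(\mathbb{N}_{0})$ exhausted the semigroup, the same equality would hold after applying $\pi$. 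It therefore suffices to treat the case $n=2$ with $f_{1},f_{2}$ nonzero and linearly independent over $\mathbb{Q}$.

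The key step is to produce a nonzero linear functional $\mu\in(\mathbb{R}^{2})^{*}$ and a point $y^{*}\in\mathbb{N}_{0}$ with $f(y^{*})\neq 0$ such that $g(y):=\mu\cdot f(y)$ is bounded above on $\mathbb{N}_{0}$ with maximum $M:=g(y^{*})\geq 0$. The construction splits into cases according to the closed convex cone $C\subseteq\mathbb{R}^{2}$ generated by $f(\mathbb{N}_{0})$. If $C$ is a pointed $2$-dimensional cone, then it has exactly two extreme rays, and since the unique asymptotic direction $u_{\infty}=\lim_{x\to\infty}f(x)/\|f(x)\|$ can coincide with at most one of them, the other is spanned by some $f(y^{*})\neq 0$; taking $\mu$ to be the negative of the supporting functional vanishing along that extreme ray gives $g\leq 0$ on $\mathbb{N}_{0}$ with $g(y^{*})=0$. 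If $C$ is a closed half-plane bounded by a line $L$, then both $u_{\infty}$ and $-u_{\infty}$ lie on $L$; only $u_{\infty}$ can be realized asymptotically, so $-u_{\infty}$ must be the direction of some $f(y^{*})\neq 0$, and letting $\mu$ be normal to $L$ with the appropriate sign again yields $g\leq 0$ and $g(y^{*})=0$. Finally if $C=\mathbb{R}^{2}$, let $d=\max(\deg f_{1},\deg f_{2})$, write $c_{i}$ for the coefficient of $x^{d}$ in $f_{i}$ (so $(c_{1},c_{2})\neq 0$), and pick any $\mu\neq 0$ with $\mu_{1}c_{1}+\mu_{2}c_{2}<0$; the polynomial $g$ then has negative leading coefficient, is bounded above, and attains its maximum at some $y^{*}$, and since $C=\mathbb{R}^{2}$ admits no nontrivial supporting functional, $g$ cannot be nonpositive on $\mathbb{N}_{0}$, so $M>0$ and consequently $f(y^{*})\neq 0$.

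Given such $\mu$ and $y^{*}$, the vector $v_{N}:=Nf(y^{*})\in[f(\mathbb{N}_{0})]$ serves as a witness. For any representation $v_{N}=\sum_{k=1}^{K}f(y_{k})$, $NM=\mu\cdot v_{N}=\sum_{k=1}^{K}g(y_{k})\leq KM$. When $M>0$ this immediately yields $K\geq N$. When $M=0$, each $g(y_{k})\leq 0$ summing to $0$ must individually vanish, so every $y_{k}$ lies in the finite zero set of $g$ (nonzero by the linear independence of $f_{1},f_{2}$); the vectors $f(y_{k})$ then take only finitely many values, of norm at most some constant $B$, and $N\|f(y^{*})\|=\|v_{N}\|\leq KB$ forces $K\to\infty$ as $N\to\infty$. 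In either case, for any fixed $m$ one chooses $N$ large enough that $v_{N}\notin\bigcup_{k=1}^{m}kf(\mathbb{N}_{0})$, proving the proposition. The hardest part of this plan is the cone-classification step: one must verify carefully that in the non-$\mathbb{R}^{2}$ subcases at least one extreme direction is actually attained by a finite value $f(y^{*})\neq 0$, rather than only approached as an asymptotic limit.
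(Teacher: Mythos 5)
The paper does not actually prove this proposition: it only remarks that the argument is ``by contradiction'' following the preceding example (witness $(N+1,\dots,N+1)$ for $f(x)=(x,x^2,\dots,x^n)$) and that the obstruction ``has to do with the boundary of the convex cone spanned by vectors of the polynomial values.'' Your proposal supplies what that sketch omits, and it is correct and in exactly the spirit the paper gestures at: you project to two linearly independent coordinates, construct a supporting (or, when $C=\mathbb{R}^2$, a generic negative-leading-coefficient) functional $\mu$ whose maximum $M=g(y^*)\ge 0$ on $f(\mathbb{N}_0)$ is attained at some $y^*$ with $f(y^*)\ne 0$, and then show that $N f(y^*)$ requires at least $N$ (resp.\ $\gtrsim N$) summands. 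The paper's example is precisely the special case $\mu=(1,-1,0,\dots,0)$, $y^*=1$, $g(y)=y-y^2$.

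The gap you flag in the cone-classification step is genuine but easy to close, and it is worth recording the argument. Because $f_1,f_2$ are linearly independent over $\mathbb{Q}$, the closed convex cone $C$ is two-dimensional, so it is indeed a pointed sector, a closed half-plane, or $\mathbb{R}^2$ --- no ray or line cases arise, which makes your trichotomy exhaustive. For the first two cases, let $S$ be the set of unit directions $f(y)/\|f(y)\|$ (over $y$ with $f(y)\ne 0$) together with $u_\infty$. Then $S$ is a compact subset of the circle whose unique non-isolated point is $u_\infty$, since $f(y)/\|f(y)\|\to u_\infty$ and the remaining directions are a finite-to-one image of $\mathbb{N}_0$. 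The two boundary directions of $C$ are the endpoints of the smallest closed arc containing $S$, both of which lie in $S$ by compactness; at most one can equal $u_\infty$, so at least one is isolated and hence equals $f(y^*)/\|f(y^*)\|$ for some $y^*\in\mathbb{N}_0$. That $y^*$ is the one you want, with $g=\mu\cdot f\le 0=g(y^*)$ for the corresponding outward normal $\mu$. The remainder of the proposal --- the $C=\mathbb{R}^2$ case via the leading coefficient, the nonvanishing of $g$ from the linear independence, and the two counting estimates distinguishing $M>0$ from $M=0$ --- is correct as written.
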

\begin{proof}
 The proof is essentially done by contradiction and the strategy
has been illustrated in the previous example and Proposition \ref{prop:semigroup covered by finitely many sumsets}. 
\end{proof}
\begin{rem*}
Consider the ring $\mathcal{O}$ of integers of a totally imaginary
number field $K$, and a closed commutative $\mathcal{O}$-subscheme
$\mathcal{G}$ of the group scheme $\mathcal{U}_{k}$ of unitary $k\times k$
matrices, and a generating $\mathcal{O}$-morphism $f:\mathbb{A}^{1}\to\mathcal{G}$,
i.e., $f$ is generating as a $K$-morphism. Then, \cite[Thm 3.1 or Prop 3.6]{LN2019}
implies that there exists some positive integer $M$ such that $Mf(\mathcal{O})$
is a subgroup of finite index in $\mathcal{G}(\mathcal{O})$. But
if we are working in a real number field $K$, then Proposition \ref{prop:semigroup not finitely covered by sumsets}
implies that an analogous statement will no longer hold in general,
for example, when $K=\mathbb{Q}$, $\mathcal{O}=\mathbb{Z}$, $\mathcal{G}=\mathbb{A}^{2}$
as a closed commutative $\mathbb{Z}$-subscheme of $\mathcal{U}_{3}$
and $f:\mathbb{A}^{1}\to\mathbb{A}^{2}$ defined by $x\mapsto(x^{2},x^{4})$. 
\end{rem*}

\section{Waring's Problem for General Discrete Heisenberg Groups $H_{2n+1}(\mathbb{Z})$
\label{sec:WP for H_=00007B2n+1=00007D(Z)}}

The main result of this paper will be given in this section. But let
us first recall definitions of polynomial maps, polynomial sequences
and polynomial sets. For any map $f:S\to G$ from a nonempty semigroup
$S$ to a group $G$ and for any $s\in S$, we define the following
left and right forward finite differences 
\begin{align*}
L_{s}(f):S & \rightarrow G &  &  & R_{s}(f):S & \rightarrow G\\
t & \mapsto f(s+t)f(t)^{-1}, &  &  & t & \mapsto f(t)^{-1}f(s+t).
\end{align*}
Then, $f:S\to G$ is a called a polynomial map of degree $\le d$,
if for any $s_{1},s_{2},\ldots,s_{d+1}\in S$, 
\[
D_{s_{1}}D_{s_{2}}\cdots D_{s_{d+1}}f\equiv1_{G},
\]
where each $D$ is arbitrarily taken to be $L$ or $R$. The minimal
$d$ with this property is called the degree of $f$. In particular,
a map $f:S\to G$ is a polynomial map of degree $-\infty$ if $f$
maps $S$ to the identity element $1_{G}$ of $G$, and $f$ is a
polynomial map of degree $0$ if it is a constant $\neq1_{G}$. A
polynomial sequence is a polynomial map of the form $f:\mathbb{N}_{0}\to G$. 

A polynomial set $U=\Ima f$ in a path-connected nilpotent Lie group
$N$ is given by the image of a continuous polynomial map $f:\mathbb{R}_{\ge0}^{n}\to N$
for some $n$, and $U$ is called open (resp. closed, resp. proper)
if $U$ is open (resp. is closed, resp. has nonempty interior). A
nonempty subset $V$ of a nilpotent group $G$ is called a polynomial
set, if it is the inverse image $\phi^{-1}(U)$ of a polynomial set
$U$ of a nilpotent Lie group $N$ along some group homomorphism $\phi\colon G\to N$,
and $V=\phi^{-1}(U)$ is called open (resp. closed, resp. proper)
in $G$ provided that $U$ has the same property in $N$. In this
work, generalized cones in nilpotent groups are given by proper polynomial
sets. 

\subsection{Case in which $\langle g\rangle\subset\mathbb{Z}^{m}$}

The following theorem is a generalization of Kamke's key theorem. 
\begin{thm}
\label{thm:WP for Z^m} Let $g:\mathbb{N}_{0}\to\mathbb{Z}^{m}$ be
a polynomial sequence. If $g\mod N$ is non-constant for any subgroup
$N$ of infinite index in $\mathbb{Z}^{m}$, then there exists $M\in\mathbb{N}$,
a subgroup $H$ of finite index in $\mathbb{Z}^{m}$ and a proper
polynomial set $V$ of $H$ such that each element in $V$ can be
written as a sum of exactly $M$ elements in $g(\mathbb{N}_{0})$. 
\end{thm}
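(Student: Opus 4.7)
The plan is to apply the symmetrization strategy together with Kamke's key theorem to reduce the problem to a linear-algebraic step on Newton power sums. Write each coordinate of $g$ as $g_i(x) = \sum_{k=0}^{n} b_{i,k} x^k \in \mathbb{Q}[x]$ with $n := \deg g$, and collect these coefficients into the rational matrix $B = (b_{i,k})_{1\le i\le m,\,1\le k\le n} \in M_{m\times n}(\mathbb{Q})$ and the constant vector $c_0 = (b_{1,0},\ldots,b_{m,0}) \in \mathbb{Z}^m$. The non-degeneracy hypothesis that $g\bmod N$ is non-constant for every infinite-index $N \le \mathbb{Z}^m$ translates, on taking $N$ to be the kernel of any nonzero rational linear functional on $\mathbb{Z}^m$, to the statement that no nonzero $\phi \in (\mathbb{Q}^m)^{\ast}$ annihilates every column of $B$; equivalently, $B$ has full row rank $m$ (and in particular $n \ge m$).

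For any $M \ge n$, the $M$-fold symmetrization
\[
S_M \colon \mathbb{N}_0^M \to \mathbb{Z}^m, \qquad S_M(x_1,\ldots,x_M) := \sum_{j=1}^M g(x_j) = M c_0 + B\,(p_1,\ldots,p_n)^T,
\]
with $p_\ell = \sum_j x_j^\ell$ the $\ell$-th Newton power sum, depends on the inputs only through $(p_1,\ldots,p_n)$. Applying Kamke's key theorem to the standard power-sum map yields a positive integer $M$, a divisibility constant $A_0$, and the Kamke domain $U(n,M)$ such that every $(p_1,\ldots,p_n) \in A_0\mathbb{Z}^n \cap U(n,M)$ is realized by some $(x_1,\ldots,x_M) \in \mathbb{N}_0^M$. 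After replacing $A_0$ by a multiple clearing the denominators of $B$, the image $B(A_0\mathbb{Z}^n)$ becomes a rank-$m$ (hence finite-index) sublattice of $\mathbb{Z}^m$.

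Set $H := B(A_0\mathbb{Z}^n) + \mathbb{Z}\cdot Mc_0$, a finite-index subgroup of $\mathbb{Z}^m$, and
\[
V := Mc_0 + B\bigl(A_0\mathbb{Z}^n \cap U(n,M)\bigr) \subseteq H.
\]
By construction, every element of $V$ is a sum of exactly $M$ elements of $g(\mathbb{N}_0)$. To identify $V$ as a proper polynomial set of $H$, take the path-connected abelian Lie group $\mathcal{N} = \mathbb{R}^m$, the inclusion $\phi\colon H \hookrightarrow \mathbb{R}^m$, and construct a continuous polynomial map $f\colon \mathbb{R}_{\ge 0}^{n'} \to \mathbb{R}^m$ of the form $f(t) = Mc_0 + B\circ h(t)$, where $h\colon \mathbb{R}_{\ge 0}^{n'} \to \mathbb{R}^n$ polynomially parameterizes an open full-dimensional subcone of $U(n,M)$. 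Since $B\colon \mathbb{R}^n \twoheadrightarrow \mathbb{R}^m$ is a surjective linear map (hence open), $\Ima f$ has nonempty interior in $\mathbb{R}^m$, so $\phi^{-1}(\Ima f)$ is a proper polynomial set of $H$.

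The principal technical obstacle is this last step: constructing $h$ so that $\phi^{-1}(\Ima f)$ coincides with $V$. The Kamke domain is bounded by the polynomial curves $s_\nu = i_\nu s_1^\nu$ and $s_\nu = J_\nu s_1^\nu$ rather than by affine hyperplanes, so a careful polynomial parameterization of an inner subcone is needed, together with a choice of $M$ divisible enough that the coset $Mc_0 + B(A_0\mathbb{Z}^n)$ embeds cleanly inside $H$. When $n = m$, the map $B$ is invertible and $V$ is in natural bijection with $A_0\mathbb{Z}^n \cap U(n,M)$, so the verification reduces to a single polynomial parameterization; when $n > m$, a short geometry-of-numbers argument is required to show that every lattice point of $H$ inside $\Ima f$ pulls back through $B$ to a point of $A_0\mathbb{Z}^n \cap U(n,M)$, using that each such fiber of $B$ is a lattice coset of positive rank inside $\ker B \cap A_0\mathbb{Z}^n$.
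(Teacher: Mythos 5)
Your proposal follows the same overall strategy as the paper: symmetrize $M$ copies of $g$, observe that the resulting sum depends only on the Newton power sums, apply Kamke's key theorem to realize points of the Kamke domain as power-sum vectors, and push forward through the linear map given by the coefficient matrix (your $B$, the paper's $C^{T}$), using the non-degeneracy hypothesis to conclude that this matrix has full row rank $m$. However, there are two substantive issues.

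First, your construction of $V$ runs in the opposite direction from what the theorem asks, and this direction is strictly harder. The theorem only requires that every element of $V$ be expressible as a sum of exactly $M$ values of $g$; it does not require $V$ to equal the set of all such sums. The paper exploits this by setting $V := \phi^{-1}(\Ima f) = H \cap \Ima f$, which is automatically a proper polynomial set by the definitions, and then verifying only the one-sided inclusion $\Ima f \cap H \subseteq p(U(d,M)\cap A\mathbb{Z}^{d})$. You instead declare $V$ to be the discrete representable set $Mc_0 + B\bigl(A_0\mathbb{Z}^n \cap U(n,M)\bigr)$ and then attempt to realize it \emph{exactly} as $\phi^{-1}(\Ima f)$. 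That would require a two-sided coincidence between a lattice set and the trace of a polynomial image on $H$, which is both unnecessary and more delicate than the inclusion the theorem actually needs.

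Second, the obstacle you flag at the end --- that when $n>m$ one must show each lattice point of $H$ inside $\Ima f$ pulls back through $B$ to a point of $A_0\mathbb{Z}^n \cap U(n,M)$ --- is precisely the content of the one-sided inclusion the paper uses, so you have identified the right technical point, but you leave it unresolved. The fiber $B^{-1}(y)\cap U(n,M)$ must be wide enough to contain a point of $A_0\mathbb{Z}^n$, which is a geometry-of-numbers condition that only holds sufficiently far out in the unbounded Kamke cone; the paper delegates the construction of a suitably thick inner polynomial subcone to \cite[Lem 8]{Hu2020} (its map $q$), so your proof is incomplete as written. A cleaner setup, which the paper adopts and which would simplify your argument, is to normalize first to $g(0)=\mathbf{0}$ and $\langle g\rangle = \mathbb{Z}^m$; this eliminates the constant-term bookkeeping ($c_0$, $Mc_0$, and the ad hoc definition of $H$ as $B(A_0\mathbb{Z}^n)+\mathbb{Z}\cdot Mc_0$) that threads through your version. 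You also omit the separate treatment of $m=1$, where the degree can be as low as $1$ and Kamke's key theorem (which requires degree $\ge 2$) does not directly apply; the paper handles this as an explicit base case.
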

\begin{rem*}
In this case, $g=(g_{1},\ldots,g_{m})$ is just a vector of polynomials
$g_{i}:\mathbb{N}_{0}\to\mathbb{Z}$ and $V=H\cap\Ima f$ is the intersection
of $H$ with the image of some proper continuous polynomial map $f=(f_{1},\ldots,f_{m}):\mathbb{R}_{\ge0}^{n}\to\mathbb{R}^{m}$
along the embedding $\varphi:H\hookrightarrow\mathbb{R}^{m}$, where
each $f_{i}:\mathbb{R}_{\ge0}^{n}\to\mathbb{R}$ is a polynomial in
$n$ variables; cf. \cite[Cor 13 and Cor 17]{Hu2020}. 
\end{rem*}
\begin{rem*}
Proposition \ref{prop:semigroup not finitely covered by sumsets}
implies that $[g]$ in general cannot be covered by finitely many
sumsets $kg(\mathbb{N}_{0})$. So the best possible result is that
points of certain proper polynomial set of a finite index subgroup
$H$ of $\langle g\rangle$ can be covered by finitely many sumsets
$kg(\mathbb{N}_{0})$.
\end{rem*}
\begin{proof}
Notice that $h=g-g(0)$ is another polynomial sequence satisfying
our hypothesis: for any subgroup $N$ of infinite index in $\mathbb{Z}^{m}$,
if $h\mod N$ is a constant, then so is $g\mod N$. So we may assume
that $g(0)=(0,\ldots,0)$. Indeed, suppose we have proven the theorem
for $h$ by finding the desirable positive integer $M$, subgroup
$H$ of finite index in $\mathbb{Z}^{m}$ and proper polynomial set
$V$ of $H$. Then, we have $\sum_{i=1}^{M}g(x_{i})-\sum_{i=1}^{M}h(x_{i})=Mg(0)$.
Since $h(0)=(0,\ldots,0)$, we can find a larger $M$ so that $Mg(0)$
lies in the lattice $H$ and translate the proper polynomial set $V$
of $H$ by a certain scalar multiple of $g(0)$ to obtain a proper
polynomial set that works for $g$. Moreover, since $g\mod N$ is
non-constant for any subgroup $N$ of infinite index in $\mathbb{Z}^{m}$,
$\langle g\rangle$ must have finite index in $\mathbb{Z}^{m}$. Then,
there exists a nonsingular matrix $D\in M(m,\mathbb{Z})$ such that
$\langle g\rangle=D\mathbb{Z}^{m}$. Replacing $g$ by $D^{-1}g$
if necessary, we may also assume that $\langle g\rangle\cong\mathbb{Z}^{m}$.

Write $g(x)=(g_{1}(x),\ldots,g_{m}(x))\in\mathbb{Z}^{m}$, where $g_{i}(x)=\sum_{j=1}^{d}c_{ji}x^{j}$,
$d=\max\{\deg g_{i}:1\le i\le m\}$, and $c_{ji}\in\mathbb{Q}$. Since
$\mathbb{Z}^{m}$ is generated by $g$, there exist $M_{i}\in\mathbb{N}$,
$\varepsilon_{ki}\in\{\pm1\}$ and $x_{ki}\in\mathbb{N}_{0}$ such
that 
\begin{align*}
e_{i} & =\left(\sum_{k=1}^{M_{i}}\varepsilon_{ki}g_{1}(x_{ki}),\ldots,\sum_{k=1}^{M_{i}}\varepsilon_{ki}g_{m}(x_{ki})\right),
\end{align*}
where $e_{1}=(1,0,\ldots,0),\ldots,e_{m}=(0,\ldots,0,1)$ are the
standard generators of $\mathbb{Z}^{m}$. Rewrite the above equation
as follows: 
\[
e_{i}=\left(\sum_{k=1}^{M_{i}}\varepsilon_{ki}x_{ki},\ldots,\sum_{k=1}^{M_{i}}\varepsilon_{ki}x_{ki}^{d}\right)\begin{pmatrix}c_{11} & \cdots & c_{1m}\\
\vdots & \ddots & \vdots\\
c_{d1} & \cdots & c_{dm}
\end{pmatrix}.
\]
For this system of linear equations to be solvable, the coefficient
matrix 
\[
C=\begin{pmatrix}c_{11} & \cdots & c_{1m}\\
\vdots & \ddots & \vdots\\
c_{d1} & \cdots & c_{dm}
\end{pmatrix}\in M_{d,m}(\mathbb{Q})
\]
must have full column rank $m$ and thus $d\ge m$. 

If $m=1$, then Kamke's non-explicit proposition covers the case in
which $d\ge2$; If $m=1=d$, then $g(x)=\pm x$ gives rise to a trivial
case. So we may assume that $m\ge2$ and thus $d\ge2$. Let $M$ be
$N(d)$ as in Kamke's key theorem and $\tilde{g}:\mathbb{N}_{\ge0}^{M}\to\mathbb{Z}^{m}$
be defined by 
\[
(x_{1},\ldots,x_{M})\mapsto\sum_{k=1}^{M}g(x_{k})=\left(\sum_{k=1}^{M}g_{1}(x_{k}),\ldots,\sum_{k=1}^{M}g_{m}(x_{k})\right).
\]
Then, we can write 
\[
\tilde{g}(x_{1},\ldots,x_{M})=\left(\sum_{j=1}^{d}c_{j1}s_{j},\ldots,\sum_{j=1}^{d}c_{jm}s_{j}\right)=(s_{1},\ldots,s_{d})C,
\]
where $s_{j}=\sum_{k=1}^{M}x_{k}^{j}$ for $j=1,\ldots,d$. Let $A$
be as in Kamke's key theorem. Replace $A$ by $kA$ for some $k\in\mathbb{N}$
if necessary so that $AC\in M_{d,m}(\mathbb{Z})$. Now we define a
linear polynomial map 
\[
p:\mathbb{R}_{\ge0}^{d}\to\mathbb{R}^{m};(s_{1},\ldots,s_{d})\mapsto(s_{1},\ldots,s_{d})C
\]
By \cite[Lem 8]{Hu2020}, a Kamke domain always contains a proper
polynomial set given by some continuous polynomial map $q:\mathbb{R}_{\ge0}^{n}\to\mathbb{R}^{d}$.
Let $f=p\circ q$. Then, $H=p(A\mathbb{Z}^{d})=D\mathbb{Z}^{m}$ is
a lattice and thus a finite index subgroup in $\mathbb{Z}^{m}$ for
some nonsingular $D\in M(m,\mathbb{Z})$. Let $U(d,M)$ be the Kamke
domain. Then, we have the following relation of sets 
\[
\tilde{g}(\mathbb{N}_{\ge0}^{M})\supset p(U(d,M)\cap A\mathbb{N}_{0}^{d})=p(U(d,M)\cap A\mathbb{Z}^{d})\supset\Ima f\cap H.
\]
Let $\phi:H\hookrightarrow\mathbb{R}^{m}$ be the inclusion map. Then,
$U=\Ima f$ (resp. $V=U\cap H=\phi^{-1}(U)$) is a proper polynomial
subset in $\mathbb{R}^{m}$ (resp. $H$) and each element of $V$
can be written as a sum of exactly $M$ elements in the sequence $g_{0},g_{1},g_{2},\ldots$. 
\end{proof}
\begin{rem*}
Notice that each entry $c_{ji}$ the coefficient matrix $C$ can be
thought as $g_{i}^{(j)}(0)/j!$. So it is natural to consider the
following matrix associated with $g=(g_{1},\ldots,g_{m})$: 
\[
\mathbf{J}(g)(x)=\begin{pmatrix}g_{1}^{(1)}(x) & \cdots & g_{m}^{(1)}(x)\\
\vdots & \ddots & \vdots\\
g_{1}^{(d)}(x) & \cdots & g_{m}^{(d)}(x)
\end{pmatrix}=\diag(1!,2!,\ldots,d!)C.
\]
Then, $C$ and $\mathbf{J}(g)(0)$ have the same rank $m$. Moreover,
for any pair $(a,b)\in\mathbb{N}\times\mathbb{N}_{0}$, by the chain
rule, the associated matrix of $g(ax+b)$ satisfies the following
equation
\[
\mathbf{J}(g(ax+b))(0)=\diag(a,a^{2},\ldots,a^{d})\mathbf{J}(g)(b).
\]
If $g$ satisfies the hypothesis of Theorem \ref{thm:WP for Z^m},
then so does $g(ax+b)$. Then, the same argument shows $\mathbf{J}(g(ax+b))(0)$
and thus $\mathbf{J}(g)(b)$ have the same rank $m$. 
\end{rem*}

\subsection{A simple observation}

 Consider a polynomial sequence of the following form 
\[
g=\begin{pmatrix}1 & g_{1,2} & g_{1,3} & \cdots & g_{1,n}\\
 & 1 & g_{2,3} & \cdots & g_{2,n}\\
 &  & 1 & \ddots & \vdots\\
 &  &  & \ddots & g_{n-1,n}\\
 &  &  &  & 1
\end{pmatrix}:\mathbb{N}_{0}\rightarrow\mathcal{U}_{n}(\mathbb{Z}).
\]
By \cite[Thm 11]{Hu2020}, each $g_{i,j}$\index{$g_{i,j}$, the $(i,j)$-entry of the polynomial sequence $g$ in
a matrix form} is a polynomial of degree $d_{i,j}$\index{$d_{i,j}$, the degree of the polynomial $g_{i,j}$}
with rational coefficients. Let $L$\index{Linmathbb{N}@$L\in\mathbb{N}$}
be an arbitrary natural number and $x_{1},x_{2},\ldots,x_{L}$ be
variables in $\mathbb{N}_{0}$. Consider the ordered product\index{hat{g}:(x_{1},ldots,x_{L})mapsto g(x_{1})cdots g(x_{L}), the 
ordered product@$\hat{g}:(x_{1},\ldots,x_{L})\mapsto g(x_{1})\cdots g(x_{L})$, the
ordered product} 
\[
\hat{g}:=\bigodot_{i=1}^{L}g:\mathbb{N}_{0}^{L}\to\mathcal{U}_{n}(\mathbb{Z});\quad(x_{1},\ldots,x_{L})\mapsto g(x_{1})\cdots g(x_{L}).
\]
Let $\hat{d}_{i,j}(L)$\index{hat{d}_{i,j}, the degree of the polynomial map hat{g}_{i,j}@$\hat{d}_{i,j}$, the degree of the polynomial map $\hat{g}_{i,j}$}
be the total degree of the polynomial map $\hat{g}_{i,j}(x_{1},\ldots,x_{L})$\index{hat{g}_{i,j}, the (i,j)-entry of the polynomial map hat{g} in a 
matrix form@$\hat{g}_{i,j}$, the $(i,j)$-entry of the polynomial map $\hat{g}$
in a matrix form} in the $L$ variables. Then, the set $\{\hat{d}_{i,j}(L)\mid1\le i<j\le n,L\in\mathbb{N}\}$
has an upper bound $B$\index{Binmathbb{N}, an upper bound of the total degrees@$B\in\mathbb{N}$, an upper bound of the total degrees},
which satisfies:
\begin{align}
B\ge & \max_{1\le i<j\le n}\left\{ d_{i,j},d_{i,l}+d_{l,j},\cdots,d_{i,l_{1}}+d_{l_{1},l_{2}}+\cdots+d_{l_{m},j}\right|\label{eq:inequality B}\\
 & \left.i+1\le l\le j-1,\cdots,i+1\le l_{1}<l_{2}<\cdots<l_{m}\le j-1\right\} .\nonumber 
\end{align}
Indeed, each entry $\hat{g}_{i,j}(x_{1},\ldots,x_{L})$ is of the
form 
\begin{align*}
 & \sum_{1\le k\le L}g_{i,j}(x_{k})+\sum_{1\le k_{1}<k_{2}\le L}\left(\sum_{i+1\le l\le j-1}g_{il}(x_{k_{1}})g_{lj}(x_{k_{2}})\right)+\cdots\\
+ & \sum_{1\le k_{1}<\cdots<k_{m}\le L}\left(\sum_{i+1\le l_{1}<l_{2}<\cdots<l_{m}\le j-1}g_{il_{1}}(x_{k_{1}})g_{l_{1}l_{2}}(x_{k_{2}})\cdots g_{l_{m}j}(x_{k_{m}})\right)+\cdots.
\end{align*}
Since the corresponding terms in the above equation vanish for $m\ge j-i$,
the total degree of each entry $\hat{g}_{i,j}$ has a universal upper
bound $B$, which satisfies (\ref{eq:inequality B}). 

\subsection{Case in which $\langle g\rangle\subset H_{2n+1}(\mathbb{Z})$}

Recall that for any $n\times n$ upper unitriangular matrix $X$ in
the Lie group $\mathcal{U}_{n}(\mathbb{R})$, its logarithm is defined
by 
\[
\log X:=\sum_{k=1}^{\infty}(-1)^{k+1}\dfrac{(X-I)^{k}}{k}=\sum_{k=1}^{n-1}(-1)^{k}\dfrac{(X-I)^{k}}{k},
\]
which lies in the nilpotent Lie algebra $\mathfrak{g}$ of strictly
upper triangular matrices. In this case, the logarithm and the exponential
\[
\exp(Y):=\sum_{k=0}^{\infty}\dfrac{Y^{k}}{k!}=\sum_{k=0}^{n-1}\dfrac{Y^{k}}{k!},\qquad\forall Y\in\mathfrak{g}
\]
are mutually inverse to each other. 

As illustrated in the following proposition, the strategy of iterated
symmetrization will be applied to solve Waring's problem in general
discrete Heisenberg groups. 
\begin{prop}
\label{prop:symmetrization} Let $X_{1},\ldots,X_{N}$ be any matrices
in the continuous Heisenberg group $H_{2n+1}(\mathbb{R})$ for some
integer $n\ge1$. Then, there exists $M\in\mathbb{N}$ and a sequence
$a_{1},a_{2},\ldots,a_{M}\in\{1,2,\ldots,N\}$, only dependent on
$N$, such that 
\begin{equation}
X_{a_{1}}X_{a_{2}}\cdots X_{a_{M}}=\exp\left(\frac{M}{N}\sum_{i=1}^{N}\log X_{i}\right).\label{eq:sym eq in H_2n+1(R)}
\end{equation}
In particular, each $a_{i}\in\{1,2,\ldots,N\}$ appears exactly $M/N$
times. 
\end{prop}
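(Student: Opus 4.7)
The proof plan exploits the 2-step nilpotency of $H_{2n+1}(\mathbb{R})$, which truncates the Baker--Campbell--Hausdorff formula: for any matrices $Y_{1},\ldots,Y_{M}\in H_{2n+1}(\mathbb{R})$,
\[
\log(Y_{1}Y_{2}\cdots Y_{M})=\sum_{i=1}^{M}\log Y_{i}+\frac{1}{2}\sum_{1\le i<j\le M}[\log Y_{i},\log Y_{j}],
\]
since all iterated commutators of length $\ge 3$ vanish. Specialising $Y_{i}=X_{a_{i}}$, the desired identity (\ref{eq:sym eq in H_2n+1(R)}) splits into a linear requirement $\sum_{i=1}^{M}\log X_{a_{i}}=\frac{M}{N}\sum_{p=1}^{N}\log X_{p}$ and a commutator requirement $\sum_{i<j}[\log X_{a_{i}},\log X_{a_{j}}]=0$.

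The linear condition is secured the moment each index $p\in\{1,\ldots,N\}$ appears exactly $k:=M/N$ times in $(a_{1},\ldots,a_{M})$. For the commutator condition, let $c_{pq}$ count the pairs $i<j$ with $a_{i}=p$ and $a_{j}=q$. Using the antisymmetry $[U,V]=-[V,U]$,
\[
\sum_{1\le i<j\le M}[\log X_{a_{i}},\log X_{a_{j}}]=\sum_{p<q}(c_{pq}-c_{qp})[\log X_{p},\log X_{q}],
\]
so because the $X_{i}$ are arbitrary it is enough to construct a single sequence satisfying $c_{pq}=c_{qp}$ for every pair $p\ne q$.

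I would then take $M=2N$ and the palindrome sequence $(a_{1},\ldots,a_{2N})=(1,2,\ldots,N,N,\ldots,2,1)$. Each index $p$ occupies positions $p$ and $2N+1-p$, and for any $p<q$ the chain of inequalities $p<q\le N<2N+1-q<2N+1-p$ makes the pair counts entirely explicit; a direct case check gives $c_{pq}=c_{qp}=2$. Substituting back into the truncated BCH formula then yields (\ref{eq:sym eq in H_2n+1(R)}). The only genuine obstacle is the combinatorial bookkeeping of meeting $c_{pq}=c_{qp}$ for all $\binom{N}{2}$ pairs while keeping the multiplicity of every index equal; the palindrome handles all pairs uniformly at once, but other balanced words (for instance, any permutation concatenated with its reversal) would serve equally well should the later applications require more flexibility.
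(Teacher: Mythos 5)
Your proof is correct and follows essentially the same route as the paper: truncate the Baker--Campbell--Hausdorff formula at the commutator term (valid because $H_{2n+1}(\mathbb{R})$ is 2-step nilpotent), choose the palindrome $1,2,\ldots,N,N,\ldots,2,1$ of length $M=2N$, observe that the linear part doubles every $\log X_p$, and show the quadratic part vanishes. The only difference is presentational: the paper simply notes that the commutators cancel in pairs via $[Y_i,Y_j]+[Y_j,Y_i]=0$, whereas you make this precise with the pair-count $c_{pq}$ and verify $c_{pq}=c_{qp}=2$ for $p<q$, which is a cleaner bookkeeping of the same cancellation.
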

\begin{proof}
If we set $Y_{i}=\log X_{i}$ for $1\le i\le N$, then it is equivalent
to proving 
\[
\exp(Y_{a_{1}})\exp(Y_{a_{2}})\cdots\exp(Y_{a_{M}})=\exp\left(\frac{M}{N}\sum_{i=1}^{N}Y_{i}\right)
\]
for some sequence $a_{1},a_{2},\ldots,a_{M}\in\{1,2,\ldots,N\}$.
From the Baker-Campbell-Hausdorff formula $\exp(X)\exp(Y)=\exp(X+Y+\frac{1}{2}[X,Y])$,
we obtain that 
\begin{align*}
\exp(Y_{a_{1}})\exp(Y_{a_{2}})\cdots\exp(Y_{a_{M}}) & =\exp\left(\sum_{i=1}^{M}Y_{a_{i}}+\dfrac{1}{2}\sum_{1\le i<j\le M}[Y_{a_{i}},Y_{a_{j}}]\right).
\end{align*}
Hence, if we choose the sequence to be $1,2,\ldots,N-1,N,N,N-1,\ldots,2,1$,
then the main terms $\sum_{i=1}^{M}Y_{a_{i}}$ becomes $\exp\left(2\sum_{i=1}^{N}Y_{i}\right)$
and all the error terms, i.e., the commutators, cancel with each other,
since $[Y_{i},Y_{j}]+[Y_{j},Y_{i}]=0$ for all $1\le i,j\le n$. 
\end{proof}
 From \cite[Theorem 14]{Hu2020} or the proposition above, we obtain
the following
\begin{cor}
Let $\hat{g}=\bigodot_{i=1}^{N}g:\mathbb{R}_{\ge0}^{N}\rightarrow H_{2n+1}(\mathbb{R})$
be the ordered product obtained from any continuous polynomial map
$g:\mathbb{R}_{\ge0}\to H_{2n+1}(\mathbb{R})$. Then, there is a finite
natural number $M$, only dependent on $N$, and a sequence $\sigma_{1},\sigma_{2},\ldots,\sigma_{M}$
in the permutation group $S_{N}$ such that the product 
\[
\tilde{g}=\prod_{i=1}^{M}\sigma_{i}(\hat{g})=\sigma_{1}(\hat{g})\sigma_{2}(\hat{g})\cdots\sigma_{M}(\hat{g})
\]
is a symmetric continuous polynomial map of the form 
\[
\tilde{g}(x_{1},\ldots,x_{N})=\exp\left(M\sum_{i=1}^{N}\log g(x_{i})\right).
\]
\end{cor}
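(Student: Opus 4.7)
The plan is to derive the corollary directly from Proposition \ref{prop:symmetrization} by applying it pointwise to the $N$ matrices $g(x_1), g(x_2), \ldots, g(x_N) \in H_{2n+1}(\mathbb{R})$, and then to repackage the explicit word produced there as a concatenation of full permutations of $\{1, 2, \ldots, N\}$.

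First, I would unwind the notation: for any $\sigma \in S_N$ acting on variables, $\sigma(\hat{g})(x_1, \ldots, x_N) = g(x_{\sigma(1)}) g(x_{\sigma(2)}) \cdots g(x_{\sigma(N)})$ by the definition of $\hat{g}$ as the ordered product. Consequently, for any finite tuple $\sigma_1, \ldots, \sigma_M \in S_N$, the value $\tilde{g}(x_1, \ldots, x_N) = \prod_{i=1}^M \sigma_i(\hat{g})(x_1, \ldots, x_N)$ is an ordered product of $MN$ factors of the form $g(x_j)$, in which each index $j \in \{1, \ldots, N\}$ appears exactly $M$ times. The task therefore reduces to choosing the orderings $\sigma_1, \ldots, \sigma_M$ so that all commutator terms in the resulting Baker--Campbell--Hausdorff expansion cancel.

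Second, I would invoke Proposition \ref{prop:symmetrization} with $X_i := g(x_i)$. Its proof already exhibits the explicit word $1, 2, \ldots, N-1, N, N, N-1, \ldots, 2, 1$ of length $2N$, yielding
\[
g(x_1) g(x_2) \cdots g(x_N) \cdot g(x_N) g(x_{N-1}) \cdots g(x_1) = \exp\!\left(2 \sum_{i=1}^N \log g(x_i)\right).
\]
The key observation is that this word splits naturally into two blocks of length $N$, each of which is a permutation of $\{1, \ldots, N\}$: the first block is the identity $\mathrm{id}$, and the second is the reversal $\tau \colon k \mapsto N + 1 - k$. Setting $M := 2$, $\sigma_1 := \mathrm{id}$, and $\sigma_2 := \tau$ therefore produces the desired factorization; and the closed form $\exp(2 \sum_i \log g(x_i))$ is manifestly symmetric in $x_1, \ldots, x_N$.

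It remains only to note that each $\sigma_i(\hat{g})$ is a continuous polynomial map $\mathbb{R}_{\ge 0}^N \to H_{2n+1}(\mathbb{R})$, being the ordered product of $N$ copies of $g$ precomposed with a coordinate permutation; since polynomial maps into a locally nilpotent group are closed under finite products by the formal theory developed in \cite{Hu2020}, the product $\tilde{g}$ inherits these properties. I expect no substantive obstacle here: the entire work sits inside Proposition \ref{prop:symmetrization}, and this corollary simply recognizes that the word used there has the block structure needed to express $\tilde{g}$ as a product of permuted copies of $\hat{g}$. The only point worth flagging is that $M = 2$ suffices uniformly in $N$, because $H_{2n+1}(\mathbb{R})$ is step-$2$ nilpotent and the Baker--Campbell--Hausdorff formula truncates after the first commutator.
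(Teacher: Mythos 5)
Your proof is correct and takes essentially the same route the paper intends: it reads Proposition~\ref{prop:symmetrization}'s explicit word $1,2,\ldots,N,N,\ldots,2,1$ as the concatenation of two permutations (identity followed by reversal), sets $X_i = g(x_i)$, and concludes with $M=2$ uniformly in $N$ because $H_{2n+1}(\mathbb{R})$ is step-$2$ nilpotent, exactly as the paper itself does when it later writes $\tilde{g}(x_1,\ldots,x_L)=g(x_1)\cdots g(x_L)g(x_L)\cdots g(x_1)$ in the proof of the main theorem.
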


\subsection*{Proof of the main result}

For any $n\in\mathbb{N}$, we denote by $G:=H_{2n+1}(\mathbb{Z})$\index{G:=H_{2n+1}(mathbb{Z}), the discrete Heisenberg group@$G:=H_{2n+1}(\mathbb{Z})$, the discrete Heisenberg group}
the general discrete Heisenberg group. Let $D$ be a nonnegative integer,
$\Gamma$ be a subgroup of $G$ and 
\[
\pi_{D}:\Gamma\to G\to H_{2n+1}(\mathbb{Z}/D)
\]
be the reduction homomorphism. The \emph{principal congruence subgroup}
of level $D$ in $\Gamma$ is defined to be the kernel of $\pi_{D}$,
and is usually denoted by $\Gamma(D)$ and a congruence subgroup of
$\Gamma$ is defined to be any group containing a principal congruence
subgroup. It is immediate that $\Gamma(D)$ is a normal subgroup of
finite index in $\Gamma$ and all congruence subgroups containing
$\Gamma(D)$ have finite index in $\Gamma$ and correspond to subgroups
of the finite groups $\pi_{D}(\Gamma)$. Therefore, 
\[
H_{2n+1}(D\mathbb{Z}):=\ker(G\to H_{2n+1}(\mathbb{Z}/D))
\]
is a normal subgroup of finite index in $G$.

For ease of notation, we can identify $H_{2n+1}(\mathbb{R})$ with
$\mathbb{R}^{2n+1}$ via the map 
\begin{align}
\begin{pmatrix}1 & \mathbf{a} & c\\
 & I_{n} & \mathbf{b}\\
 &  & 1
\end{pmatrix} & \mapsto(\mathbf{a},\mathbf{b},c),\label{eq:Lie group id}
\end{align}
where $I_{n}$ is the identity matrix of size $n$, with group law
given by matrix multiplication: 
\begin{equation}
(\mathbf{a},\mathbf{b},c)\cdot(\mathbf{a}',\mathbf{b}',c')=(\mathbf{a}+\mathbf{a}',\mathbf{b}+\mathbf{b}',c+c'+\mathbf{a}\cdot\mathbf{b}').\label{eq:group law of H_2n+1}
\end{equation}
Similarly, we can identify the Lie algebra $\mathfrak{h}_{2n+1}(\mathbb{R})$
of $H_{2n+1}(\mathbb{R})$ with $\mathbb{R}^{2n+1}$ via the map 
\begin{align}
\begin{pmatrix}0 & \mathbf{a} & c\\
 & \mathbf{0}_{n} & \mathbf{b}\\
 &  & 0
\end{pmatrix} & \mapsto(\mathbf{a},\mathbf{b},c).\label{eq:Lie algebra id}
\end{align}
Note that depending on the context the same notation $(\mathbf{a},\mathbf{b},c)$
may denote different matrices. Let $\pi_{[1,2n]}:\mathbb{R}^{2n+1}\to\mathbb{R}^{2n}$\index{pi_{[1,2n]}:mathbb{R}^{2n+1}tomathbb{R}^{2n}, the projection 
of the first 2n coordinates@$\pi_{[1,2n]}:\mathbb{R}^{2n+1}\to\mathbb{R}^{2n}$, the projection
of the first $2n$ coordinates} or $\pi_{[1,2n]}:H_{2n+1}(\mathbb{R})\cong\mathbb{R}^{2n+1}\to\mathbb{R}^{2n}$\index{pi_{[1,2n]}:H_{2n+1}(mathbb{R})congmathbb{R}^{2n+1}tomathbb{R}^{2n}
, the projection of the first 2n coordinates@$\pi_{[1,2n]}:H_{2n+1}(\mathbb{R})\cong\mathbb{R}^{2n+1}\to\mathbb{R}^{2n}$,
the projection of the first $2n$ coordinates} be the projection of the first $2n$ coordinates and $\omega:\mathbb{R}^{2n}\times\mathbb{R}^{2n}\to\mathbb{R}$\index{omega:mathbb{R}^{2n}timesmathbb{R}^{2n}tomathbb{R}, the 
symplectic form@$\omega:\mathbb{R}^{2n}\times\mathbb{R}^{2n}\to\mathbb{R}$, the symplectic
form} be the symplectic form given by the nonsingular, skew-symmetric matrix
$\Omega=\begin{pmatrix} & I_{n}\\
-I_{n}
\end{pmatrix}$. Then, it is easy to verify the following formulas 
\begin{align}
(\mathbf{a},\mathbf{b},c)^{-1} & =(-\mathbf{a},-\mathbf{b},-c+\mathbf{a}\cdot\mathbf{b}),\label{eq:inverse}\\
(\mathbf{a},\mathbf{b},c)(\mathbf{a}',\mathbf{b}',c')(\mathbf{a},\mathbf{b},c)^{-1} & =(\mathbf{a}',\mathbf{b}',c'+\mathbf{a}\cdot\mathbf{b}'-\mathbf{a}'\cdot\mathbf{b}),\label{eq:conjugation}\\
(\mathbf{a},\mathbf{b},c)(\mathbf{a}',\mathbf{b}',c')(\mathbf{a},\mathbf{b},c)^{-1}(\mathbf{a}',\mathbf{b}',c')^{-1} & =(\mathbf{0},\mathbf{0},\mathbf{a}\cdot\mathbf{b}'-\mathbf{a}'\cdot\mathbf{b})=(\mathbf{0},\mathbf{0},\omega((\mathbf{a},\mathbf{b}),(\mathbf{a}',\mathbf{b}'))).\label{eq:commutator}
\end{align}
Notice that the center of $H_{2n+1}(\mathbb{R})$ is $Z=\{(\mathbf{0},\mathbf{0},c)\in H_{2n+1}(\mathbb{R})\mid c\in\mathbb{R}\}\cong\mathbb{R}$,
and that the matrix exponential $\exp:\mathfrak{h}_{2n+1}(\mathbb{R})\to H_{2n+1}(\mathbb{R})$
and the matrix logarithm $\log:H_{2n+1}(\mathbb{R})\to\mathfrak{h}_{2n+1}(\mathbb{R})$
are diffeomorphisms inverse to each other.

Let $g:\mathbb{N}_{0}\to G$ be a polynomial sequence. We can write\index{g(x)=(mathbf{a}(x),mathbf{b}(x),c(x)), a polynomial sequence in 
G=H_{2n+1}(mathbb{Z})@$g(x)=(\mathbf{a}(x),\mathbf{b}(x),c(x))$, a polynomial sequence
in $G=H_{2n+1}(\mathbb{Z})$}
\[
g(x)=(\mathbf{a}(x),\mathbf{b}(x),c(x))=(\mathbf{0},\mathbf{0},d(x))(\mathbf{a}(x),\mathbf{b}(x),\frac{1}{2}\mathbf{a}(x)\cdot\mathbf{b}(x)),
\]
where $\mathbf{a}=(g_{1,2},\ldots,g_{1,n+1})$\index{mathbf{a}=(g_{1,2},ldots,g_{1,n+1}), a vector of polynomial 
sequences in mathbb{Z}^{n}@$\mathbf{a}=(g_{1,2},\ldots,g_{1,n+1})$, a vector of polynomial sequences
in $\mathbb{Z}^{n}$} is a vector of polynomial sequences $g_{1,i}:\mathbb{N}_{0}\to\mathbb{Z}$,
$\mathbf{b}=(g_{2,n+2},\ldots,g_{n+1,n+2})$\index{mathbf{b}=(g_{2,n+2},ldots,g_{n+1,n+2}), a vector of polynomial 
sequences in mathbb{Z}^{n}@$\mathbf{b}=(g_{2,n+2},\ldots,g_{n+1,n+2})$, a vector of polynomial
sequences in $\mathbb{Z}^{n}$} is a vector of polynomial sequences $g_{j,n+2}:\mathbb{N}_{0}\to\mathbb{Z}$,
$c=g_{1,n+2}:\mathbb{N}_{0}\to\mathbb{Z}$\index{c=g_{1,n+2}, a polynomial sequence in mathbb{Z}@$c=g_{1,n+2}$, a polynomial sequence in $\mathbb{Z}$}
is a polynomial sequence, and $d=c-\frac{1}{2}\mathbf{a}\cdot\mathbf{b}:\mathbb{N}_{0}\to\mathbb{Z}$\index{d=c-frac{1}{2}mathbf{a}cdotmathbf{b}, a polynomial sequence 
in mathbb{Z}@$d=c-\frac{1}{2}\mathbf{a}\cdot\mathbf{b}$, a polynomial sequence
in $\mathbb{Z}$} is another polynomial sequence. We may replace $\mathbb{N}_{0}$
by $\mathbb{Z}_{\ge N}$ for some sufficiently large $N$ so that
$g_{i,j}(x)$ does not change its sign for all $x\ge N$. Since $(\mathbf{0},\mathbf{0},d(x))$
lies in the center of $G$, we have
\begin{align*}
\log g(x) & =\log(\mathbf{0},\mathbf{0},d(x))+\log(\mathbf{a}(x),\mathbf{b}(x),\frac{1}{2}\mathbf{a}(x)\cdot\mathbf{b}(x)),\\
 & =(\mathbf{0},\mathbf{0},d(x))+(\mathbf{a}(x),\mathbf{b}(x),0)=(\mathbf{a}(x),\mathbf{b}(x),d(x)).
\end{align*}
Consider the ordered product 
\[
\hat{g}:=\bigodot_{i=1}^{L}g:\mathbb{N}_{0}^{L}\to G;\ (x_{1},\ldots,x_{L})\mapsto g(x_{1})\cdots g(x_{L}).
\]
By the Baker-Campbell-Hausdorff formula, we have 
\begin{align*}
 & \hat{g}(x_{1},\ldots,x_{L})=\exp\left(\sum_{i=1}^{L}\log g(x_{i})\right)\exp\left(\dfrac{1}{2}\sum_{1\le i<j\le L}[\log g(x_{i}),\log g(x_{j})]\right)\\
= & \left(\sum_{i=1}^{L}\mathbf{a}(x_{i}),\sum_{i=1}^{L}\mathbf{b}(x_{i}),\sum_{i=1}^{L}d(x_{i})+\dfrac{1}{2}\left(\sum_{i=1}^{L}\mathbf{a}(x_{i})\right)\cdot\left(\sum_{i=1}^{L}\mathbf{b}(x_{i})\right)\right)\\
 & \left(\mathbf{0},\mathbf{0},\dfrac{1}{2}\sum_{1\le i<j\le L}\omega((\mathbf{a}(x_{i}),\mathbf{b}(x_{i})),(\mathbf{a}(x_{j}),\mathbf{b}(x_{j})))\right).
\end{align*}
Then, by the proof of Proposition \ref{prop:symmetrization}, the
product\index{tilde{g}(x_{1},ldots,x_{L})=g(x_{1})cdots g(x_{L})g(x_{L})cdots g(x_{1})
, symmetric polynomial map in L variables@$\tilde{g}(x_{1},\ldots,x_{L})=g(x_{1})\cdots g(x_{L})g(x_{L})\cdots g(x_{1})$,
symmetric polynomial map in $L$ variables}
\[
\tilde{g}(x_{1},\ldots,x_{L})=g(x_{1})\cdots g(x_{L})g(x_{L})\cdots g(x_{1})
\]
is a symmetric polynomial map in $L$ variables $x_{1},x_{2},\ldots,x_{L}$
of the form
\[
\exp\left(2\sum_{i=1}^{L}\log g(x_{i})\right)=\left(\sum_{i=1}^{L}2\mathbf{a}(x_{i}),\sum_{i=1}^{L}2\mathbf{b}(x_{i}),\sum_{i=1}^{L}2d(x_{i})+2\left(\sum_{i=1}^{L}\mathbf{a}(x_{i})\right)\cdot\left(\sum_{i=1}^{L}\mathbf{b}(x_{i})\right)\right).
\]
Let $B$ \index{Binmathbb{N}, the least upper bound of the total degrees@$B\in\mathbb{N}$, the least upper bound of the total degrees}
be the least upper bound as in the simple observation and $a_{i,j}^{k}$
be the coefficient of the term $x^{k}$ in the polynomial $g_{i,j}(x)$\index{$a_{i,j}^{k}$, the coefficient of the term $x^{k}$ in the polynomial
$g_{i,j}(x)$}. Then, we can write 
\[
g_{i,j}(x)=\sum_{k=1}^{B}a_{i,j}^{k}x^{k}=(a_{i,j}^{0},a_{i,j}^{1},\ldots,a_{i,j}^{B})\cdot(1,x,\ldots,x^{B}).
\]
To ease the notation, we write $\mathbf{a}_{i,j}^{B}=(a_{i,j}^{0},a_{i,j}^{1},\ldots,a_{i,j}^{B})$
and $\mathbf{s}_{B}=(s_{0},s_{1},\ldots,s_{B})$, where $s_{j}=\sum_{i=1}^{L}x_{i}^{j}$
for $j=0,\ldots,B$. Each $\tilde{g}_{i,j}$ is a symmetric polynomial
of total degree $\le B$ and thus can be written as a polynomial expression
with rational coefficients in the power sum symmetric polynomials
$s_{1},s_{2},\ldots,s_{B}$, i.e., \index{tilde{g}_{i,j}, the (i,j)-entry of the polynomial sequence tilde{g}
 in a matrix form@$\tilde{g}_{i,j}$, the $(i,j)$-entry of the polynomial sequence
$\tilde{g}$ in a matrix form}
\[
\tilde{g}_{i,j}(x_{1},\cdots,x_{L})=p_{i,j}(s_{1},\ldots,s_{B}),
\]
where $p_{i,j}$\index{$p_{i,j}$, an integer-valued polynomial in $B$ variables}
is an integer-valued polynomial in $B$ variables $s_{1},\ldots,s_{B}\in\mathbb{N}_{0}$
with rational coefficients of the following forms:  
\[
p_{1,i}(s_{1},\ldots,s_{B})=2\mathbf{a}_{1,i}^{B}\cdot\mathbf{s}_{B},\qquad p_{i,n+2}(s_{1},\ldots,s_{B})=2\mathbf{a}_{i,n+2}^{B}\cdot\mathbf{s}_{B},
\]
\begin{align*}
p_{1,n+2}(s_{1},\ldots,s_{B}) & =2\mathbf{a}_{1,n+2}^{B}\cdot\mathbf{s}_{B}-\sum_{k=2}^{n+1}\sum_{j=0}^{B}\sum_{l=0}^{j}a_{1,k}^{l}a_{k,n+2}^{j-l}s_{j}+2\sum_{k=2}^{n+1}\left(\mathbf{a}_{1,k}^{B}\cdot\mathbf{s}_{B}\right)\left(\mathbf{a}_{k,n+2}^{B}\cdot\mathbf{s}_{B}\right).
\end{align*}
Therefore, we can define the following continuous polynomial map in
$B$ variables $s_{1},\ldots,s_{B}\in\mathbb{R}_{\ge0}$: \index{p:=(p_{1,2},ldots,p_{1,n+1},p_{2,n+2},ldots,p_{n+1,n+2},p_{1,n+2})
, a continuous polynomial map in B variables@$p:=(p_{1,2},\ldots,p_{1,n+1},p_{2,n+2},\ldots,p_{n+1,n+2},p_{1,n+2})$,
a continuous polynomial map in $B$ variables}
\[
p:=(p_{1,2},\ldots,p_{1,n+1},p_{2,n+2},\ldots,p_{n+1,n+2},p_{1,n+2}):\mathbb{R}_{\ge0}^{B}\to H_{2n+1}(\mathbb{R}).
\]
Then, $\log p(s_{1},\ldots,s_{B})$ is given by 
\[
\left(2\mathbf{a}_{1,2}^{B}\cdot\mathbf{s}_{B},\ldots,2\mathbf{a}_{1,n+1}^{B}\cdot\mathbf{s}_{B},2\mathbf{a}_{2,n+2}^{B}\cdot\mathbf{s}_{B},\ldots,2\mathbf{a}_{n+1,n+2}^{B}\cdot\mathbf{s}_{B},2\mathbf{a}_{1,n+2}^{B}\cdot\mathbf{s}_{B}-\sum_{k=2}^{n+1}\sum_{j=0}^{B}\sum_{l=0}^{j}a_{1,k}^{l}a_{k,n+2}^{j-l}s_{j}\right),
\]
whose Jacobian matrix with respect to the $s_{j}$'s ($j=1,\ldots,B$)
\index{mathbf{J}, Jacobian matrix of log p(s_{1},ldots,s_{B}) with 
respect to the s_{j}'s@$\mathbf{J}$, Jacobian matrix of $\log p(s_{1},\ldots,s_{B})$ with
respect to the $s_{j}$'s} is given by 
\[
\mathbf{J}=\begin{pmatrix}2a_{1,2}^{1} & \cdots & 2a_{1,n+1}^{1} & 2a_{2,n+2}^{1} & \cdots & 2a_{n+1,n+2}^{1} & 2a_{1,n+2}^{1}-\sum_{k=2}^{n+1}\sum_{l=0}^{1}a_{1,k}^{l}a_{k,n+2}^{1-l}\\
\vdots & \ddots & \vdots & \vdots & \ddots & \vdots & \vdots\\
2a_{1,2}^{B} & \cdots & 2a_{1,n+1}^{B} & 2a_{2,n+2}^{B} & \cdots & 2a_{n+1,n+2}^{B} & 2a_{1,n+2}^{B}-\sum_{k=2}^{n+1}\sum_{l=0}^{B}a_{1,k}^{l}a_{k,n+2}^{B-l}
\end{pmatrix}.
\]
Let $\mathbf{J}_{0}$ \index{mathbf{J}_{0}, the matrix formed by the first 2n columns of mathbf{J}@$\mathbf{J}_{0}$, the matrix formed by the first $2n$ columns of
$\mathbf{J}$} be the matrix formed by the first $2n$ columns of $\mathbf{J}$
and let $\mathbf{J}_{1}$ \index{mathbf{J}_{1}, the matrix which contains the constant vector(2a_{1,2}^{0},cdots,2a_{1,n+1}^{0},2a_{2,n+2}^{0},cdots,2a_{n+1,n+2}^{0},2a_{1,n+2}^{0}-sum_{k=2}^{n+1}a_{1,k}^{0}a_{k,n+2}^{0})
as its first row and mathbf{J} as the remaining rows@$\mathbf{J}_{1}$, the matrix which contains the constant vector
\[
(2a_{1,2}^{0},\cdots,2a_{1,n+1}^{0},2a_{2,n+2}^{0},\cdots,2a_{n+1,n+2}^{0},2a_{1,n+2}^{0}-\sum_{k=2}^{n+1}a_{1,k}^{0}a_{k,n+2}^{0})
\]
as its first row and $\mathbf{J}$ as the remaining rows} be the matrix which has the constant vector
\begin{equation}
(2a_{1,2}^{0},\cdots,2a_{1,n+1}^{0},2a_{2,n+2}^{0},\cdots,2a_{n+1,n+2}^{0},2a_{1,n+2}^{0}-\sum_{k=2}^{n+1}a_{1,k}^{0}a_{k,n+2}^{0})\label{eq:1st row of J_1}
\end{equation}
as its first row and $\mathbf{J}$ as the remaining rows. 

Suppose that $g\mod N$ is non-constant for any normal subgroup $N$
of infinite index in $G$.  Since any infinite index normal subgroup
of $G/[G,G]$ corresponds to an infinite index normal subgroup of
$G$ containing $[G,G]$, we see that the induced map 
\[
g\mod[G,G]:\mathbb{N}_{0}\to\mathbb{Z}^{2n};\ x\mapsto(\mathbf{a}(x),\mathbf{b}(x)),
\]
satisfies the hypotheses of Theorem \ref{thm:WP for Z^m}. By the
proof of Theorem \ref{thm:WP for Z^m}, $g\mod[G,G]$ has degree at
least $2n$ and $\mathbf{J}_{0}$ has rank $2n$ and $p_{1,2},\ldots,p_{1,n+1},p_{2,n+2},\ldots,p_{n+1,n+2}$
are linearly independent in variables $s_{1},\ldots,s_{B}$ with $B\ge2n$.
In this case, the rank of $\mathbf{J}$ is $2n$, if and only if the
last column of $\mathbf{J}$ is a $\mathbb{Q}$-linear combination
of the first $2n$ columns of $\mathbf{J}$, if and only if $d(x)$\index{d(x)=mathbf{u}cdotmathbf{a}(x)+mathbf{v}cdotmathbf{b}(x)+w, 
the general degenerate case@$d(x)=\mathbf{u}\cdot\mathbf{a}(x)+\mathbf{v}\cdot\mathbf{b}(x)+w$,
the general degenerate case} has the form 
\begin{equation}
d(x)=\mathbf{u}\cdot\mathbf{a}(x)+\mathbf{v}\cdot\mathbf{b}(x)+w,\label{eq:d(x)}
\end{equation}
where $\mathbf{u}=(u_{1},\ldots,u_{n})\in\mathbb{Q}^{n}$, $\mathbf{v}=(v_{1},\ldots,v_{n})\in\mathbb{Q}^{n}$,
and $w\in\mathbb{Q}$.  

Just like the abelian case, we consider the following matrix associated
to $\log g$: 
\begin{align*}
\mathbf{J}(\log g)(x) & =\begin{pmatrix}g_{1,2}^{(1)} & \cdots & g_{1,n+1}^{(1)} & g_{2,n+2}^{(1)} & \cdots & g_{n+1,n+2}^{(1)} & d^{(1)}\\
\vdots & \ddots & \vdots & \vdots & \ddots & \vdots & \vdots\\
g_{1,2}^{(B)} & \cdots & g_{1,n+1}^{(B)} & g_{2,n+2}^{(B)} & \cdots & g_{n+1,n+2}^{(B)} & d^{(B)}
\end{pmatrix}(x).
\end{align*}
Notice that entries of $\mathbf{J}(\log g)(0)$ are nothing but positive
scalar multiples of coefficients of $\log g(x)$ or the corresponding
entries of the above matrix $\mathbf{J}$. Similarly, let $\mathbf{J}_{0}(\log g)(x)$
be the matrix formed by the first $2n$ columns of $\mathbf{J}(\log g)(x)$
and let $\mathbf{J}_{1}(\log g)(x)$ be the matrix which contains
\begin{equation}
(g_{1,2}(x),\cdots,g_{1,n+1}(x),g_{2,n+2}(x),\cdots g_{n+1,n+2}(x),d(x))\label{eq:1st row of J_1(log g)}
\end{equation}
as its first row and $\mathbf{J}(g)$ as the remaining rows. 

For any pair $(a,b)\in\mathbb{N}\times\mathbb{N}_{0}$, we can also
consider the polynomial sequence $g(ax+b)$, i.e., the composition
of $g(x)$ with the affine translation $ax+b$. By the chain rule
of derivatives, the matrix $\mathbf{J}(\log g(ax+b))(x)$ is given
by 
\[
\begin{pmatrix}ag_{1,2}^{(1)} & \cdots & ag_{1,n+1}^{(1)} & ag_{2,n+2}^{(1)} & \cdots & ag_{n+1,n+2}^{(1)} & ad^{(1)}\\
\vdots & \ddots & \vdots & \vdots & \ddots & \vdots & \vdots\\
a^{B}g_{1,2}^{(B)} & \cdots & a^{B}g_{1,n+1}^{(B)} & a^{B}g_{2,n+2}^{(B)} & \cdots & a^{B}g_{n+1,n+2}^{(B)} & a^{B}d^{(B)}
\end{pmatrix}(ax+b).
\]
If $g_{i}(x)=(\mathbf{a}_{i}(x),\mathbf{b}_{i}(x),c_{i}(x)):\mathbb{N}_{0}\to G$,
$1\le i\le m$, are polynomial sequences, then the logarithm of their
ordered product is given by 
\begin{align*}
\log(g_{1}g_{2}\cdots g_{m}) & =\sum_{i=1}^{m}\log g_{i}+\frac{1}{2}\sum_{1\le i<j\le m}[\log g_{i},\log g_{j}]\\
 & =\sum_{i=1}^{m}(\mathbf{a}_{i},\mathbf{b}_{i},d_{i})+\frac{1}{2}\sum_{1\le i<j\le m}(\mathbf{0},\mathbf{0},\mathbf{a}_{i}\cdot\mathbf{b}_{j}-\mathbf{a}_{j}\cdot\mathbf{b}_{i})
\end{align*}
and $\mathbf{J}(g_{1}g_{2}\cdots g_{m})(x)=\sum_{i=1}^{m}\mathbf{J}(\log g_{i})(x)+\frac{1}{2}\sum_{1\le i<j\le m}\mathbf{J}([\log g_{i},\log g_{j}])(x)$,
where 
\[
\mathbf{J}([\log g_{i},\log g_{j}])(x)=\begin{pmatrix}\mathbf{0} & (\mathbf{a}_{i}\cdot\mathbf{b}_{j}^{(1)}-\mathbf{a}_{j}\cdot\mathbf{b}_{i}^{(1)}+\mathbf{a}_{i}^{(1)}\cdot\mathbf{b}_{j}-\mathbf{a}_{j}^{(1)}\cdot\mathbf{b}_{i})(x)\\
\vdots & \vdots\\
\mathbf{0} & \sum_{k=0}^{B}\binom{B}{k}(\mathbf{a}_{i}^{(k)}\cdot\mathbf{b}_{j}^{(B-k)}-\mathbf{a}_{j}^{(k)}\cdot\mathbf{b}_{i}^{(B-k)})(x)
\end{pmatrix}.
\]

We need the following lemma to handle the degenerate case. 
\begin{lem}
\label{lem:lem4deg} If $g\mod N:\mathbb{N}_{0}\to G\twoheadrightarrow G/N$
is non-constant for any normal subgroup $N$ of infinite index in
$G$, then there exists a finite number of pairs $(a_{i},b_{i})\in\mathbb{N}\times\mathbb{N}_{0}$,
$1\le i\le m$, such that the associated matrix $\mathbf{J}(\log h)(0)$
of the polynomial sequence 
\[
h(x):=g(a_{1}x+b_{1})\cdots g(a_{m}x+b_{m}):\mathbb{N}_{0}\to G
\]
has rank $2n+1$. 
\end{lem}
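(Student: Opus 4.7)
The plan is to handle two cases. If $\mathbf{J}(\log g)(0)$ itself already has rank $2n+1$, I take $m=1$ with $(a_1,b_1)=(1,0)$. Otherwise we are in the degenerate case where $d(x)=\mathbf{u}\cdot\mathbf{a}(x)+\mathbf{v}\cdot\mathbf{b}(x)+w$ as in (\ref{eq:d(x)}), and the goal is to construct $h$ out of affine translates of $g$ so that the Baker--Campbell--Hausdorff commutator contributions produce a genuinely new central term.

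Writing $A(x):=(\mathbf{a}(x),\mathbf{b}(x))$ and $A_i(x):=A(a_ix+b_i)$, the BCH formula in the two-step nilpotent group $H_{2n+1}(\mathbb{R})$ yields
\[
\log h(x)=\Bigl(\,Q(x),\;\sum_i d(a_ix+b_i)+\tfrac{1}{2}\Phi(x)\,\Bigr),
\]
where $Q(x):=\sum_i A_i(x)\in\mathbb{Q}^{2n}$ and $\Phi(x):=\sum_{i<j}\omega(A_i(x),A_j(x))$, with $\omega$ the standard symplectic form on $\mathbb{Q}^{2n}$. Using the degenerate form of $d$, the last coordinate becomes $(\mathbf{u},\mathbf{v})\cdot Q(x)+mw+\tfrac{1}{2}\Phi(x)$, so $\mathbf{J}(\log h)(0)$ has rank $2n+1$ if and only if (i) $\mathbf{J}(Q)(0)$ has rank $2n$, and (ii) the polynomial $\Phi(x)$ is not a $\mathbb{Q}$-linear combination of the entries of $Q(x)$ plus a constant. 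Condition (i) is arranged by including $(1,0)$ among the pairs and invoking the rank-preserving identity $\mathbf{J}(A(a\cdot+b))(0)=\diag(a,a^2,\ldots,a^B)\mathbf{J}(A)(b)$ from the remark after Theorem \ref{thm:WP for Z^m}.

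The crux is condition (ii). The structural observation I would use is that $\omega(A(x),A(y))=\sum_{k,l}\omega(A_k,A_l)\,x^k y^l$ is a \emph{nonzero} polynomial in $\mathbb{Q}[x,y]$: if every pairing $\omega(A_k,A_l)$ vanished, the $\mathbb{Q}$-span of $\{A_k\}$ would be $\omega$-isotropic, hence of dimension at most $n$, contradicting the rank-$2n$ non-degeneracy of the abelianization furnished by Theorem \ref{thm:WP for Z^m}. Moreover, the hypothesis forces both $\mathbf{a}$ and $\mathbf{b}$ to have positive degree (otherwise one of the projections would factor through an infinite-index subgroup of $\mathbb{Z}^{2n}$), so $p:=\deg\mathbf{a}\ge 1$ and $q:=\deg\mathbf{b}\ge 1$ satisfy $p+q>d:=\deg A$. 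I then take $m=2$ with $(a_1,b_1)=(1,0)$ and $(a_2,b_2)=(a,b)$, giving $\Phi(x)=\omega(A(x),A(ax+b))$, and aim to choose $(a,b)$ so that $\deg\Phi>d$; this immediately forces (ii) since the entries of $Q$ have degree at most $d$.

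The \textbf{main obstacle} is the possible vanishing of the top-degree coefficient of $\Phi$ at $x^{p+q}$, which equals $(a^q-a^p)\,\mathbf{a}_p\cdot\mathbf{b}_q$ when $p\ne q$ and vanishes identically when $p=q$. For $p\ne q$ with $\mathbf{a}_p\cdot\mathbf{b}_q\ne 0$, any $a\ge 2$ suffices. In the remaining subcases I would extract a nonzero $x^m$-coefficient of $\Phi$ for some $m>d$ coming from the next-order pairings $\omega(A_k,A_l)$ with $k+l>d$; by the structural observation above such a pairing cannot be zero for every choice among the high-degree coefficients, and a Vandermonde-type argument over varying $(a,b)\in\mathbb{N}\times\mathbb{N}_0$---and if necessary the use of $m\ge 3$ translates to enlarge the span of resulting polynomials $\Phi$---will produce parameters for which $\Phi$ acquires a monomial of degree $>d$, thus satisfying (ii) and completing the proof.
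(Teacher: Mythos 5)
Your BCH reduction to conditions (i) and (ii) is the right framework, and your isotropy observation is correct and in fact \emph{sharper} than the step the paper uses at the corresponding point. The paper, in the degenerate case, passes from the vanishing of the skew pairings $\omega(A^{(k)}(b),A^{(L-k)}(b))$ to the claim that $(\mathbf{a}\cdot\mathbf{b})^{(L)}$ vanishes for odd $L$, hence that $\mathbf{a}\cdot\mathbf{b}$ is constant; but that vanishing controls the \emph{antisymmetric} combination $\mathbf{a}^{(k)}\cdot\mathbf{b}^{(L-k)}-\mathbf{a}^{(L-k)}\cdot\mathbf{b}^{(k)}$, while $(\mathbf{a}\cdot\mathbf{b})^{(L)}$ is the \emph{symmetric} Leibniz sum, so the paper's inference is not cogent as written. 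Your remark that an $\omega$-isotropic span has dimension at most $n<2n$, contradicting the rank-$2n$ hypothesis, is exactly the fix, and you also depart from the paper by letting the shifts $b_i$ vary across factors (the paper fixes one $b$ and argues by contradiction over all $m$). So the route is genuinely different.

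The execution, however, has a real gap at the crux. To force $\deg\Phi>\deg A$ you need a nonzero pairing $\omega(A_k,A_l)$ with $k+l>\deg A$, and your structural observation only yields that \emph{some} $\omega(A_k,A_l)\neq 0$, with no control on $k+l$. Your explicit top coefficient $(a^q-a^p)\,\mathbf{a}_p\cdot\mathbf{b}_q$ dies whenever $p=q$ or $\mathbf{a}_p\cdot\mathbf{b}_q=0$, and the ``next-order pairings / Vandermonde / $m\ge 3$'' fallback is not an argument. What you actually need is the following sharper fact: writing $A(x)=\sum_k A_k x^k$ of degree $d_0:=\deg A$, we have $A_{d_0}\neq 0$ and $\{A_1,\dots,A_{d_0}\}$ spans $\mathbb{R}^{2n}$ (from the rank-$2n$ hypothesis), so non-degeneracy of $\omega$ forces $\omega(A_{d_0},A_l)\neq 0$ for some $1\le l\le d_0$; this gives a nonzero pairing at total degree $d_0+l>d_0$. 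Granting this, take $m=2$ with $b_1=b_2=0$, $a_1=1$, $a_2=a$: then $\mathbf{J}(Q)(0)=(I+\operatorname{diag}(a,\dots,a^B))\,\mathbf{J}(A)(0)$ has rank $2n$ for every $a>0$ (note that your condition (i) is \emph{not} automatic with the general pairs $(1,0)$, $(a,b)$, since a sum of two rank-$2n$ tall matrices can drop rank — this is a second gap), and the $x^{d_0+l}$-coefficient of $\Phi(x)=\omega(A(x),A(ax))$ is $\sum_{k<k',\,k+k'=d_0+l}\omega(A_k,A_{k'})(a^{k'}-a^k)$, a nonzero polynomial in $a$ because its terms have distinct degrees, hence nonzero for all but finitely many integers $a$. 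Thus $m=2$ and $b=0$ suffice; no Vandermonde and no $m\ge 3$ are needed. As written, though, the proposal leaves the degree-mismatch claim unsupported exactly in the degenerate subcases, which is where all the work lies.
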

\begin{proof}
Since $g\mod[G,G]=(\mathbf{a},\mathbf{b}):\mathbb{N}_{0}\to G/[G,G]\cong\mathbb{Z}^{2n}$
satisfies the hypotheses of Theorem \ref{thm:WP for Z^m}, by the
remark after the proof of that theorem, we can conclude $\mathbf{J}_{0}(\log g)(b)=\mathbf{J}_{0}((\mathbf{a},\mathbf{b}))(b)$
has rank $2n$ for any $b\in\mathbb{N}_{0}$. If $\rank\mathbf{J}(\log g)(b)=2n+1$
for some $b\in\mathbb{N}_{0}$, then we are done. Suppose $\rank\mathbf{J}(\log g)(b)=\rank\mathbf{J}_{0}(\log g)(b)=2n$
for all $b\in\mathbb{N}_{0}$, i.e., $d(x)$ is of the form (\ref{eq:d(x)}). 

For any tuple $(a_{1},\ldots,a_{m},b)\in\mathbb{N}^{m}\times\mathbb{N}_{0}$,
consider the ordered product 
\[
h(x)=g(a_{1}x+b)g(a_{2}x+b)\cdots g(a_{m}x+b).
\]
Then, we have 
\begin{align*}
\mathbf{J}(\log h)(0) & =\sum_{i=1}^{m}\mathbf{J}(\log g(a_{i}x+b))(0)+\frac{1}{2}\sum_{1\le i<j\le m}\mathbf{J}([\log g(a_{i}x+b),\log g(a_{j}x+b)])(0)\\
 & =\diag\left(\sum_{i=1}^{m}a_{i},\sum_{i=1}^{m}a_{i}^{2},\ldots,\sum_{i=1}^{m}a_{i}^{B}\right)\mathbf{J}(\log g)(b)\\
 & +\frac{1}{2}\sum_{1\le i<j\le m}\sum_{l=2}^{n+1}\begin{pmatrix}\mathbf{0} & \sum_{k=0}^{1}\binom{1}{k}(a_{i}^{k}a_{j}^{1-k}-a_{i}^{1-k}a_{j}^{k})g_{1,l}^{(k)}g_{l,n+2}^{(1-k)}(b)\\
\vdots & \vdots\\
\mathbf{0} & \sum_{k=0}^{B}\binom{B}{k}(a_{i}^{k}a_{j}^{B-k}-a_{i}^{B-k}a_{j}^{k})g_{1,l}^{(k)}g_{l,n+2}^{(B-k)}(b)
\end{pmatrix}.
\end{align*}

Now we fix $b$ and let $S_{m}(b)$ be the subset of $\mathbb{N}^{m}$
consisting of tuples $(a_{1},\ldots,a_{m})$ such that the rank of
$\mathbf{J}(\log h)(0)$ is $2n$. Since $\rank\mathbf{J}(\log g)(b)=2n$,
so is $\rank\sum_{i=1}^{m}\mathbf{J}(\log g(a_{i}x+b))(0)$. Hence,
the rank of $\mathbf{J}(\log h)(0)$ is $2n$ if and only if the last
column of 
\[
\frac{1}{2}\sum_{1\le i<j\le m}\mathbf{J}([\log g(a_{i}x+b),\log g(a_{j}x+b)])(0)
\]
is a linear combination of the first $2n$ columns of $\sum_{i=1}^{m}\mathbf{J}(\log g(a_{i}x+b))(0)$.
Since $g_{i,j}(x)$ are polynomials in $\mathbb{Q}[x]$, we obtain
a system of $B$ polynomial equations in $\mathbb{Q}[a_{1},\ldots,a_{m}]$
of degree at most $B$. So $S_{m}(b)$ consists of tuples $(a_{1},\ldots,a_{m})\in\mathbb{N}^{m}$
that are solutions to these polynomial equations. For $m>1$, we need
to analyze when $S_{m}(b)=\mathbb{N}^{m}$, i.e., when these $B$
polynomials vanish on all of $\mathbb{N}^{m}$. However, comparing
similar terms, we see that this can happen if and only if the last
column of $\frac{1}{2}\sum_{1\le i<j\le m}\mathbf{J}([\log g(a_{i}x+b),\log g(a_{j}x+b)])(0)$
is a vector of zero polynomials in variables $a_{1},\ldots,a_{m}$.
Therefore, as the coefficient of $a_{i}^{k}a_{j}^{L-k}$, 
\[
\frac{1}{2}\sum_{l=2}^{n+1}\binom{L}{k}\left(g_{1,l}^{(k)}g_{l,n+2}^{(L-k)}(b)-g_{1,l}^{(L-k)}g_{l,n+2}^{(k)}(b)\right)
\]
must vanish for all $L=1,\ldots,B$ and $k=0,\ldots,L$. Since this
works for all $b\in\mathbb{N}_{0}$, we see that
\begin{align*}
(\mathbf{a}\cdot\mathbf{b})^{(L)}(x) & =\sum_{l=2}^{n+1}\sum_{k=0}^{L}\binom{L}{k}g_{1,l}^{(k)}g_{l,n+2}^{(L-k)}(x)
\end{align*}
vanishes for any odd $L=1,\ldots,B$, and thus $\mathbf{a}\cdot\mathbf{b}$
must be a constant $C$. However, if $\mathbf{a}\cdot\mathbf{b}=C$,
then $(\mathbf{a},\mathbf{b})$ cannot satisfy the hypotheses of \cite[Thm 2]{Hu2021}.
So for sufficiently large $m$, we must have $S_{m}(b)\ne\mathbb{N}^{m}$
for some $b\in\mathbb{N}_{0}$ and thus the proof is complete. 
\end{proof}
 In the proof of the main result, we will need the following maps
of sets \index{delta:H_{2n+1}(mathbb{R})to H_{2n+1}(mathbb{R});(mathbf{a},mathbf{b},c)mapsto(mathbf{a},mathbf{b},c-frac{1}{2}mathbf{a}cdotmathbf{b})@$\delta:H_{2n+1}(\mathbb{R})\to H_{2n+1}(\mathbb{R});(\mathbf{a},\mathbf{b},c)\mapsto(\mathbf{a},\mathbf{b},c-\frac{1}{2}\mathbf{a}\cdot\mathbf{b})$}\index{iota:H_{2n+1}(mathbb{R})to H_{2n+1}(mathbb{R});(mathbf{a},mathbf{b},c)mapsto(mathbf{a},mathbf{b},c+frac{1}{2}mathbf{a}cdotmathbf{b})@$\iota:H_{2n+1}(\mathbb{R})\to H_{2n+1}(\mathbb{R});(\mathbf{a},\mathbf{b},c)\mapsto(\mathbf{a},\mathbf{b},c+\frac{1}{2}\mathbf{a}\cdot\mathbf{b})$}
\begin{align*}
\delta:H_{2n+1}(\mathbb{R}) & \to H_{2n+1}(\mathbb{R}), & \iota:H_{2n+1}(\mathbb{R}) & \to H_{2n+1}(\mathbb{R}),\\
(\mathbf{a},\mathbf{b},c) & \mapsto(\mathbf{a},\mathbf{b},c-\frac{1}{2}\mathbf{a}\cdot\mathbf{b}), & (\mathbf{a},\mathbf{b},c) & \mapsto(\mathbf{a},\mathbf{b},c+\frac{1}{2}\mathbf{a}\cdot\mathbf{b}),
\end{align*}
which are bijective and mutually inverses to each other. 
\begin{proof}[Proof of Theorem \ref{thm:WP for Heisenberg group}]
 Since $g\mod N$ is non-constant for any normal subgroup $N$ of
infinite index in $G$, $\langle g\rangle$ must have finite index
in $G$.  Set $B$ as before and let $L'\in\mathbb{N}$\index{L'inmathbb{N}@$L'\in\mathbb{N}$}
be the least number such that
\[
\max\{\hat{d}_{i,j}(L)\mid1\le i<j\le n,1\le L\le L'\}=B.
\]
By Kamke's key theorem, there exist positive integers $A$\index{Ainmathbb{N}, in Kamke's key theorem@$A\in\mathbb{N}$, in Kamke's key theorem},
$L''$\index{L''inmathbb{N}@$L''\in\mathbb{N}$}, and positive numbers
$i_{1}$\index{i_{1}inmathbb{R}_{>0}@$i_{1}\in\mathbb{R}_{>0}$} and
$i_{\nu}$, $J_{\nu}$ \index{i_{nu} and J_{nu}, positive numbers with 0<i_{nu}<J_{nu}, nu=2,3,ldots,B@$i_{\nu}$ and $J_{\nu}$, positive numbers with $0<i_{\nu}<J_{\nu}$,
$\nu=2,3,\ldots,B$} with $0<i_{\nu}<J_{\nu}$, $\nu=2,3,\ldots,B$, such that for each
$B$ integers $s_{1},\ldots,s_{B}$, divisible by $A$ and subject
to the following conditions 
\[
s_{1}\in(i_{1},\infty);\ s_{\nu}/s_{1}^{\nu}\in(i_{\nu},J_{\nu})\text{ for }\nu=2,3,\ldots,B,
\]
the $B$ equations $s_{\nu}=\sum_{\kappa=1}^{L''}x_{\kappa}^{\nu}$,
$\nu=1,2,\ldots,B$, are simultaneously solvable by integers $x_{\kappa}\ge0$.
Let $L$ \index{Lin Amathbb{N}, Lgemax{L',L''}@$L\in A\mathbb{N}$, $L\ge\max\{L',L''\}$}
be the least integer $\ge\max\{L',L''\}$ and divisible by $A$. 

By \cite[Cor 8]{Hu2020}, the subgroup $\langle\hat{g}\rangle=\langle\bigodot_{i=1}^{L}g\rangle$
has finite index in $\langle g\rangle$. Since $\langle g\rangle$
is finitely generated and nilpotent, by a result due to Mal'tsev (cf.
\cite[Thm 2.23]{CMZ2017}), $\langle\hat{g}\restriction_{\mathbb{N}_{0}}\rangle=\langle g^{L}\rangle$\footnote{Here, $\restriction$ is the standard restriction symbol in \LaTeX.}
has finite index in $\langle g\rangle$ and thus in $\langle\hat{g}\rangle$.
By \cite[Thm 15]{Hu2020}, $\langle\tilde{g}\rangle$ has finite index
in $\langle\hat{g}\rangle$ and thus also in $\langle g\rangle$.
Then, we have $\langle\tilde{g}(\mathbb{N}_{0}^{L})\rangle\subseteq\langle p(\mathbb{N}_{0}^{B})\rangle\subseteq\langle p(\mathbb{R}_{\ge0}^{B})\rangle\subseteq H_{2n+1}(\mathbb{R})$.
Since $\langle\tilde{g}(\mathbb{N}_{0}^{L})\rangle$ has finite index
in $G$, $\langle p(\mathbb{R}_{\ge0}^{B})\rangle$ contains a neighborhood
of the identity of $H_{2n+1}(\mathbb{R})$ and thus $\langle p(\mathbb{R}_{\ge0}^{B})\rangle=H_{2n+1}(\mathbb{R})$.
 In particular, $\langle p(U)\rangle=H_{2n+1}(\mathbb{R})$ for any
subset $U$ of $\mathbb{R}_{\ge0}^{B}$ with nonempty interior. 

Case 1: If $d(x)$ is not of the form (\ref{eq:d(x)}), then the rank
of $\mathbf{J}$ is $2n+1$. Let $U(B,L)$ \index{$U(B,L)$, the Kamke domain}
be the Kamke domain, which by \cite[Lem 8]{Hu2020} contains a proper
polynomial set given by some continuous polynomial map $q$\index{$q$, a continuous polynomial map whose image is a proper polynomial
set insight the Kamke domain}, and set $f=p\circ q$\index{f=pcirc q@$f=p\circ q$}. Therefore,
the dimension of $p(\mathbb{R}_{\ge0}^{B})$ or the proper polynomial
set $U=\Ima f$\index{U=Ima f@$U=\Ima f$} is $2n+1$ as desired.
Moreover, we have 
\[
\tilde{g}(\mathbb{N}_{0}^{2L})\supset p(U(B,L)\cap A\mathbb{N}_{0}^{B})=p(U(B,L)\cap A\mathbb{Z}^{B})\supset U\cap p(A\mathbb{Z}^{B}).
\]
 Then, $\delta\circ p$ is affine linear in $s_{1},\ldots,s_{B}$.
It is easy to see that the Jacobian matrix of $\delta\circ p$ with
respect to $s_{1},\ldots,s_{B}$ is the same as the one of $\log p$.
If necessary, we can replace $A$ by $kA$ for some $k\in\mathbb{N}$
so that $p(A\mathbb{Z}^{B})\in G$. Since the first row vector (\ref{eq:1st row of J_1})
of $\mathbf{J}_{1}$ is a $\mathbb{Q}$-linear combination of the
row vectors of $\mathbf{J}$, we can replace $L$ by $k'L$ for some
$k'\in\mathbb{N}$ so that $\delta\circ p(A\mathbb{Z}^{B})$ is a
lattice with integral entries in the underlying Euclidean space of
$H_{2n+1}(\mathbb{R})$ and the subgroup $H=H_{2n+1}(D\mathbb{Z})$\index{H=H_{2n+1}(Dmathbb{Z})@$H=H_{2n+1}(D\mathbb{Z})$}
lies in $\delta\circ p(A\mathbb{Z}^{B})$ for some $D\in2\mathbb{N}$\index{Din2mathbb{N}@$D\in2\mathbb{N}$}.
Thus, $\iota(H)$\index{iota(H), the image of the map iota restricted on H@$\iota(H)$, the image of the map $\iota$ restricted on $H$}
lies in $p(A\mathbb{Z}^{B})$. But $\iota(H)=H$ as sets, since $D$
is even and divides $D\cdot D/2$. Then, $H$ is a finite index subgroup
of $G$ such that $H\subset p(A\mathbb{Z}^{B})$. Let $\phi:H\hookrightarrow H_{2n+1}(\mathbb{R})$
\index{phi:Hhookrightarrow H_{2n+1}(mathbb{R}), the inclusion map@$\phi:H\hookrightarrow H_{2n+1}(\mathbb{R})$, the inclusion map}
be the inclusion map. Then, $V=\phi^{-1}(U)=U\cap H\subset\tilde{g}(\mathbb{N}_{0}^{2L})$
\index{V=phi^{-1}(U)=Ucap H@$V=\phi^{-1}(U)=U\cap H$} is a proper
polynomial subset in $H$ and each element of $V$ can be written
as a product of exactly $2L$ elements in the sequence $g_{0},g_{1},g_{2},\ldots$.

Case 2: If $d(x)$ is of the form (\ref{eq:d(x)}), then the rank
of $\mathbf{J}$ is $2n$. Then, by Lemma \ref{lem:lem4deg}, we can
find a finite number of pairs $(a_{i},b_{i})\in\mathbb{N}\times\mathbb{N}_{0}$,
$1\le i\le m$, such that the associated matrix $\mathbf{J}(\log h)(0)$
of the polynomial sequence $h(x):=g(a_{1}x+b_{1})\cdots g(a_{m}x+b_{m}):\mathbb{N}_{0}\to G$
has rank $2n+1$. Working with $h(x)$ instead of $g(x)$, we are
reduced to the previous case. 
\end{proof}

\section{Waring's Problem for Locally Nilpotent Groups \label{sec:WP for LNGs}}

The basic setting of Waring's problem for locally nilpotent groups
is as follows: Let $G$ be a locally nilpotent group, $g:\mathbb{N}_{0}\to G$
be an arbitrary polynomial sequence of degree $d$, and $[g]$ (resp.
$\langle g\rangle$) be the semigroup (resp. locally nilpotent subgroup)
generated by the polynomial sequence $g_{0},g_{1},g_{2},\ldots$.
By \cite[Prop 2]{Hu2020}, we may assume that $G=\langle g\rangle$
is finitely generated and nilpotent.

The whole discussion of Waring's problem for finitely generated nilpotent
groups is divided into different cases, according to the degree of
$g$, and the cardinality and nilpotency class of the group $\langle g\rangle$.
But we only state and prove some nontrivial cases here.

We may assume that $\langle g\rangle$ is infinite. Otherwise, one
can easily show that $[g]=\langle g\rangle$ is an open polynomial
set such that each element in $[g]$ can be written as a product of
at most $M$ elements in the sequence $g_{0},g_{1},g_{2},\ldots$
for some $M\in\mathbb{N}$. 

We may as well assume that $d\ge1$. If $d\le0$, i.e., $g$ is constant,
then it is easy to prove that there exists $M\in\mathbb{N}$ and a
proper polynomial set $V$ in $\langle g\rangle$ such that each element
in $V\cap[g]$ can be written as a product of at most $M$ elements
in the sequence $g_{0},g_{1},g_{2},\ldots$. 

\subsection{Case in which $d=1$}

The following example sheds some light on the proof of next theorem. 
\begin{example*}
Consider the quotient group $G$ of $H_{3}(\mathbb{Z})=<x,y\mid[x,[x,y]]=[y,[x,y]]=1>$
given by 
\[
G=<x,y\mid[x,[x,y]]=[y,[x,y]]=1=x^{n}y^{-m}>,\text{ for some }n\ne0.
\]
Then, $G^{\text{ab}}=<\bar{x},\bar{y}\mid n\bar{x}-m\bar{y}=0>$ has
rank $1$. Consider the polynomial sequence $g:\mathbb{N}_{0}\to G$;
$i\mapsto xy^{i}$ of degree $1$. Then, $g_{0}=x$, $g_{1}=xy$,
and $g_{0}^{-1}g_{1}=y$. Hence, $\langle g\rangle=G$ but $\langle g\rangle^{\text{ab}}$
has rank $1$. 
\end{example*}
\begin{thm}
\label{thm:WP for deg=00003D1} Let $g:\mathbb{N}_{0}\to G$ be a
polynomial sequence of degree $d=1$ such that $\langle g\rangle$
is infinite. If $g_{0}\ne1_{G}$ and $g\mod N$ is non-constant for
any normal subgroup $N$ of infinite index in $\langle g\rangle$,
then there exists $M\in\mathbb{N}$, a subgroup $H$ of finite index
in $\langle g\rangle$, and a proper polynomial set $V$ in $H$ such
that each element in $V\cap[g]$ can be written as a product of at
most $M$ elements in the sequence $g_{0},g_{1},g_{2},\ldots$. 
\end{thm}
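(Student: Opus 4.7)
The plan is to adapt the machinery of Section \ref{sec:WP for H_=00007B2n+1=00007D(Z)} to the general finitely generated nilpotent setting, exploiting the simplifications afforded by $\deg g = 1$. First, the degree hypothesis forces $g_i = g_0 h^i$ for a unique $h \in \langle g \rangle$, so $\langle g \rangle = \langle g_0, h \rangle$ is finitely generated nilpotent of some class $c$. Passing to its Mal'cev completion and taking logarithms, $\log g_i$ becomes a polynomial map $\mathbb{N}_0 \to \mathfrak{g}$ of degree at most $c-1$ in $i$ by the Baker--Campbell--Hausdorff formula. The assumption that $g \bmod N'$ is non-constant for every infinite-index normal subgroup $N'$ of $\langle g \rangle$ then descends to the successive quotients in the lower central series of $\langle g \rangle$.

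Next I would form the ordered product $\hat{g}(i_1,\ldots,i_L) = g_{i_1}\cdots g_{i_L}$ and the palindromic symmetrization $\tilde{g}(i_1,\ldots,i_L) = g_{i_1}\cdots g_{i_L}\, g_{i_L}\cdots g_{i_1}$. By extending Proposition \ref{prop:symmetrization} from class $2$ to general class $c$ (grouping BCH terms by bracket length and observing that pairs of iterated brackets that are antisymmetric under the reversal involution cancel), $\log \tilde{g}$ becomes a symmetric polynomial in $(i_1,\ldots,i_L)$ and is therefore expressible as a polynomial in the power sums $s_j = \sum_k i_k^j$ for $1 \le j \le B$, where $B$ depends only on $c$. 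Fixing a Mal'cev basis of $\langle g \rangle$, this yields a polynomial map $\mathbb{N}_0^B \to \mathbb{Z}^m$ at the level of Lie-algebra coordinates, where $m$ is the Hirsch length of $\langle g \rangle$.

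I would then apply Theorem \ref{thm:WP for Z^m} to this coordinate polynomial map, translating the non-degeneracy hypothesis on $g$ through the central series of $\langle g \rangle$ into the hypothesis required by Theorem \ref{thm:WP for Z^m}. Combining with Kamke's key theorem and \cite[Lem 8]{Hu2020}, one obtains a proper polynomial set in the relevant Kamke domain whose image under $\exp$ lies in a finite-index subgroup $H$ of $\langle g \rangle$ and is realized by products of at most $M = 2L$ elements from the sequence, giving the desired $V$.

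The main obstacle will be the potential rank drop of the associated Jacobian in the degenerate case, as illustrated by the example preceding the theorem where $\langle g \rangle^{\mathrm{ab}}$ has rank $1$ and the commutator data of $(g_0, h)$ are constrained by extra relations. To overcome this I would imitate Lemma \ref{lem:lem4deg}, replacing $g(x)$ by a finite product of affine translations $g(a_1 x + b_1)\cdots g(a_m x + b_m)$ chosen to force the Jacobian of the resulting $\log\tilde g$ to attain maximal rank compatible with the hypotheses; the iteration terminates because the Hirsch length of $\langle g \rangle$ is finite. Finally, the intersection $V \cap [g]$ in the conclusion (rather than $V$ alone) accommodates the fact that only elements realizable by genuine positive words in $g_0, g_1, \ldots$ are hit by this construction, which is the reason the statement is weaker than Theorem \ref{thm:WP for Heisenberg group}.
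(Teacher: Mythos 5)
Your proposal takes a fundamentally different and much heavier route than the paper, and it has a concrete gap. The paper's proof of this theorem is elementary and uses none of the Kamke/symmetrization/Mal'cev machinery: since $\deg g = 1$, the sequence is affine multiplicative, $g_i = g_0 l_1^i$ with $l_1 = g_0^{-1}g_1$, hence $\langle g \rangle = \langle g_0, l_1 \rangle$. The hypotheses then force $\langle g \rangle^{\mathrm{ab}}$ to have rank exactly $1$ and $\langle l_1\rangle$ to be a \emph{finite-index} infinite cyclic subgroup of $\langle g \rangle$; thus $\langle g \rangle$ is virtually $\mathbb{Z}$ (Hirsch length $1$). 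One then takes $H = \langle l_1\rangle\cong\mathbb{Z}$, replaces $g$ by $h := g_0^{n-1}g$ for a suitable $n$ so that $[h]\subseteq\langle l_1\rangle$ is a shifted copy of $\mathbb{N}_0$, and takes $V$ to be the preimage of a ray $\mathbb{R}_{\ge k}$ under $\phi: H\hookrightarrow\mathbb{R}$; every element of $V\cap[g]$ is a product of at most $n$ terms of the original sequence. Your proposal never isolates this structural collapse, which is exactly the simplification that $\deg g = 1$ is meant to buy.

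Beyond the mismatch of scale, your argument has a real error. You assert that the single palindromic product $\tilde{g}(x_1,\ldots,x_L) = g(x_1)\cdots g(x_L)\,g(x_L)\cdots g(x_1)$ becomes symmetric in $(x_1,\ldots,x_L)$ for nilpotency class $c$ ``by grouping BCH terms by bracket length.'' This is false already for $c=3$ and $L=2$: writing $X = \log g(x_1)$, $Y = \log g(x_2)$, a direct BCH computation in a class-$3$ algebra gives
\[
\log\bigl(g(x_1)g(x_2)g(x_2)g(x_1)\bigr) - \log\bigl(g(x_2)g(x_1)g(x_1)g(x_2)\bigr) = -[X+Y,[X,Y]] \ne 0
\]
in general. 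Proposition \ref{prop:symmetrization} is special to $H_{2n+1}$ (class $2$), where the only BCH error term $\tfrac{1}{2}[X,Y]$ is killed by the palindrome. For higher class one needs the \emph{iterated} symmetrization of \cite[Thm 14]{Hu2020} (as used in Theorem \ref{thm:WP for LNGs}), not a single palindrome; your proposal does not invoke it and does not supply the extra permutations it requires. Even if you patched this, there is a further obstacle: before invoking Theorem \ref{thm:WP for Z^m} you would have to verify its nondegeneracy hypothesis for the coordinate map $\mathbb{N}_0^B\to\mathbb{Z}^m$, but as the example preceding the theorem in the paper shows, $\langle g\rangle^{\mathrm{ab}}$ may have rank only $1$, so the naive Mal'cev-coordinate map is far from satisfying that hypothesis. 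The paper avoids all of this by identifying the virtually cyclic structure directly.
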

\begin{proof}
Since $\mathbb{N}_{0}$ is a commutative monoid, by \cite[Prop 1]{Hu2020},
$g$ is affine multiplicative, and $l:=g_{0}^{-1}g$ and $r:=gg_{0}^{-1}$
are multiplicative, i.e., $l_{i}=l_{1}^{i}$ and $r_{i}=r_{1}^{i}$
for all $i\in\mathbb{N}_{0}$. Then, 
\[
\langle g\rangle=\langle g_{0},g_{1},g_{2},\ldots\rangle=\langle g_{0},l_{1}\rangle=\langle g_{0},r_{1}\rangle=\langle g_{0},g_{1}\rangle
\]
 and thus $\langle g\rangle^{\text{ab}}=\langle g\rangle/[\langle g\rangle,\langle g\rangle]=\langle\bar{g}_{0},\bar{l}_{1}\rangle$
has rank $0$, $1$ or $2$. 

If $\langle g\rangle^{\text{ab}}$ has rank $0$, then it is finite.
 Then, $\langle g\rangle$ is also finite, which contradicts our
assumption. Suppose that $\langle g\rangle^{\text{ab}}=\langle\bar{g}_{0},\bar{l}_{1}\rangle$
has rank $2$. Then, we have $\langle g\rangle^{\text{ab}}=\langle\bar{g}_{0},\bar{l}_{1}\rangle$
and $\langle\bar{l}_{1}\rangle$ is a subgroup of infinite index in
$\langle g\rangle^{\text{ab}}$. Let $N$ be the normal subgroup of
$\langle g\rangle$ such that $\langle g\rangle/N\cong\langle g\rangle^{\text{ab}}/\langle\bar{l}_{1}\rangle$.
Since $\langle g\rangle^{\text{ab}}/\langle\bar{l}_{1}\rangle\cong\mathbb{Z}$,
$N$ has infinite index in $\langle g\rangle$. Since $g_{i}=g_{0}l_{i}=g_{0}l_{1}^{i}\in g_{0}N$,
it follows that $g\mod N$ is a constant, which also contradicts our
assumption. 

So $\langle g\rangle^{\text{ab}}=\langle\bar{g}_{0},\bar{l}_{1}\rangle$
must have rank $1$. If $\langle l_{1}\rangle$ is a subgroup of infinite
index in $\langle g\rangle$, then by \cite[Lem 6]{Hu2020} there
exists a normal subgroup $N$ of infinite index in $\langle g\rangle$
containing $\langle l_{1}\rangle$. Since $g_{i}=g_{0}l_{i}=g_{0}l_{1}^{i}\in g_{0}N$,
it follows that $g\mod N$ is a constant. Hence, $\langle l_{1}\rangle$
must be a subgroup of finite index in $\langle g\rangle$ and thus
$l_{1}$ has infinite order.  Let $H=\langle l_{1}\rangle\cong\mathbb{Z}$
be a subgroup of $\langle g\rangle$. 

If $g_{0}$ has finite order, say $n$, then we consider $h:=g_{0}^{n-1}g$.
Then, $h_{i}=g_{0}^{n-1}g_{0}l_{1}^{i}=l_{1}^{i}$ and thus $h:\mathbb{N}_{0}\to G$
is a polynomial map of degree $1$ with $h_{0}=1_{G}$. Thus, $H=\langle h\rangle=\langle l_{1}\rangle\cong\mathbb{Z}$
is an infinite cyclic group generated by $l_{1}$. Then, we have $[h]=\{l_{1}^{i}\mid i\in\mathbb{N}_{0}\}\cong\mathbb{N}_{0}$. 

If $g_{0}$ has infinite order, then we claim that there exist $n,m\in\mathbb{Z}\setminus\{0\}$
such that $g_{0}^{n}=l_{1}^{m}$. Indeed, if cosets $g_{0}^{i}\langle l_{1}\rangle$
and $g_{0}^{j}\langle l_{1}\rangle$ had no intersection whenever
$i\ne j$, then this would imply that $\langle l_{1}\rangle$ had
infinite index in $\langle g\rangle$. So for some $i>j$, the $g_{0}^{i}\langle l_{1}\rangle$
and $g_{0}^{j}\langle l_{1}\rangle$ have nonempty intersection and
thus must coincide. Then, $g_{0}^{i}=g_{0}^{j}l_{1}^{m}$ implies
that $g_{0}^{i-j}=l_{1}^{m}$. Then, we can take $n=i-j\ge1$. Again,
we consider $h:=g_{0}^{n-1}g$. Then, $h_{k}=g_{0}^{n-1}g_{0}l_{1}^{k}=l_{1}^{m+k}$
and $\langle h\rangle=\langle l_{1}\rangle\cong\mathbb{Z}$ is a subgroup
of finite index in $\langle g\rangle$. Then, we have $[h]=\{l_{1}^{m+i}\mid i\in\mathbb{N}_{0}\}\cong\mathbb{Z}_{\ge m}$. 

In either case, we consider the group homomorphism $\phi:H\cong\mathbb{Z}\hookrightarrow\mathbb{R}$
and the proper polynomial set $\mathbb{R}_{\ge k}$ for some large
$k$  in $\mathbb{R}$. Then, $\phi^{-1}(\mathbb{R}_{\ge k})$ is
a proper polynomial set in $H$ such that each element in $[h]$ can
be written as a product of at most $1$ element in the sequence $h_{0},h_{1},h_{2},\ldots$
and thus at most $n$ element in the sequence $g_{0},g_{1},g_{2},\ldots$.
\end{proof}

\subsection{The most general case}

Below is a primitive result of the most general case of Waring's problem
for locally nilpotent groups.
\begin{thm}
\label{thm:WP for LNGs} If $g$ has degree $d\ge2$ and $g\mod N$
is non-constant for any normal subgroup $N$ of infinite index in
$\langle g\rangle$, then there exist $A,M\in\mathbb{N}$, a subgroup
$H$ of finite index in $\langle g\rangle$, a polynomial subset $V$
 of $H$ and a polynomial map $p:\mathbb{N}_{0}^{B}\to\mathcal{U}_{n}(\mathbb{Z})$
such that every element in $V\cap p(A\mathbb{N}_{0}^{B})$ can be
written as a product of at most $M$ elements in the sequence $g_{0},g_{1},g_{2},\ldots$.
\end{thm}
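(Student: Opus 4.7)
The plan is to extend the strategy used in the proof of Theorem \ref{thm:WP for Heisenberg group} to the general locally nilpotent setting. Since we may assume $\langle g\rangle$ is finitely generated nilpotent by \cite[Prop 2]{Hu2020}, a standard Mal'tsev embedding realizes $\langle g\rangle$ as a subgroup of some $\mathcal{U}_n(\mathbb{Z})$, so we may treat $g$ as a polynomial sequence in $\mathcal{U}_n(\mathbb{Z})$. Let $B$ be the universal upper bound on the total degrees of the entries $\hat{g}_{i,j}$ from the simple observation preceding the main result, and form the ordered product $\hat{g}:=\bigodot_{i=1}^{L}g:\mathbb{N}_{0}^{L}\to\mathcal{U}_{n}(\mathbb{Z})$ for a sufficiently large $L$. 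Apply iterated symmetrization (in the spirit of Proposition \ref{prop:symmetrization}, generalized by repeated palindromic reshuffling up through the nilpotency class of $\mathcal{U}_n$) to produce a product $\tilde{g}:=\prod_{k=1}^{M_{0}}\sigma_{k}(\hat{g})$ that is a symmetric polynomial map of the form $\tilde{g}(x_1,\ldots,x_L)=\exp\bigl(M_0\sum_{i=1}^{L}\log g(x_i)\bigr)$, all higher BCH commutator contributions being annihilated by antisymmetry in the permuted variables.

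Each entry of $\log\tilde{g}$ is then a symmetric polynomial of total degree at most $B$ in $x_{1},\ldots,x_{L}$ with rational coefficients, and the fundamental theorem on symmetric functions lets us write it as a rational polynomial expression in the power sums $s_{1},\ldots,s_{B}$, $s_{j}=\sum_{i=1}^{L}x_{i}^{j}$. This defines a continuous polynomial map $p:\mathbb{R}_{\ge0}^{B}\to\mathcal{U}_{n}(\mathbb{R})$ with $\tilde{g}=p\circ(s_{1},\ldots,s_{B})$. Enlarging $L$ by a multiple (so that $L\ge L''$ from Kamke's key theorem) and choosing $A\in\mathbb{N}$ so that $p(A\mathbb{Z}^{B})\subset\mathcal{U}_{n}(\mathbb{Z})$, Kamke's key theorem guarantees that every tuple $(s_{1},\ldots,s_{B})\in U(B,L)\cap A\mathbb{Z}^{B}$ lying in the Kamke domain is realized as power sums of non-negative integers $x_{\kappa}$. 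By \cite[Lem 8]{Hu2020}, $U(B,L)$ contains a proper polynomial set $\operatorname{Im} q$ for some continuous polynomial map $q:\mathbb{R}_{\ge0}^{k}\to\mathbb{R}^{B}$; set $f:=p\circ q$ and $U:=\operatorname{Im} f$. This yields
\[
\tilde{g}(\mathbb{N}_{0}^{L})\supset p\bigl(U(B,L)\cap A\mathbb{Z}^{B}\bigr)\supset U\cap p(A\mathbb{Z}^{B}).
\]

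To produce the subgroup $H$, invoke \cite[Thm 15]{Hu2020} and the Mal'tsev finite-index stability result to conclude that $\langle\tilde{g}\rangle$ has finite index in $\langle g\rangle$; hence $\langle p(A\mathbb{Z}^{B})\rangle$ contains a finite-index subgroup, which we take (after possibly enlarging $A$ and $L$) to be $H$. Define $V:=U\cap H$, interpreted via the inclusion $H\hookrightarrow\mathcal{U}_n(\mathbb{R})$ as a polynomial subset of $H$. Then every element of $V\cap p(A\mathbb{N}_{0}^{B})$ is represented as a product of at most $M:=M_{0}L$ elements of the sequence $g_{0},g_{1},g_{2},\ldots$.

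The main obstacle is controlling the rank of the Jacobian of $\log p$ when higher-order commutator coordinates degenerate. The non-constancy hypothesis together with Theorem \ref{thm:WP for Z^m} only gives full rank on the abelianization, and while in the Heisenberg case Lemma \ref{lem:lem4deg} resolves the single degenerate commutator coordinate by replacing $g(x)$ with an ordered product $g(a_{1}x+b_{1})\cdots g(a_{m}x+b_{m})$, in the general nilpotent setting each nested commutator coordinate may fail the analogous rank condition independently. This is why the statement is only \emph{primitive}: rather than asserting $V$ is proper in $H$, it simply fixes the polynomial map $p$ and declares the covered set to be $V\cap p(A\mathbb{N}_{0}^{B})$, sidestepping the full degenerate analysis required to upgrade $V$ to a generalized cone as demanded in Larsen's question.
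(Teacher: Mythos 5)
Your proposal follows the same overall skeleton as the paper's proof (embed in $\mathcal{U}_n(\mathbb{Z})$, form $\hat{g}$, symmetrize to $\tilde{g}$, rewrite entries in power sums, apply Kamke's key theorem via \cite[Lem 8]{Hu2020}), but there are two genuine gaps.

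First, you assert that ``a standard Mal'tsev embedding realizes $\langle g\rangle$ as a subgroup of some $\mathcal{U}_n(\mathbb{Z})$.'' That embedding theorem (\cite[Thm 7.5]{HallEdmonton1957}) applies only to finitely generated \emph{torsion-free} nilpotent groups; a finitely generated nilpotent group with torsion cannot embed in $\mathcal{U}_n(\mathbb{Z})$. The paper therefore proves the theorem first in the torsion-free case and then performs a separate reduction using Hirsch's theorem (that a finitely generated nilpotent group embeds with finite index in $T\times F$ with $T$ finite and $F$ torsion-free), passing either to a periodic subsequence $s_i = g_{iP+b}$ or to the power sequence $g^{|T|}$, and verifying that the non-constancy hypothesis survives this replacement. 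Your proposal omits this reduction entirely, so it only establishes the theorem under an unstated extra hypothesis.

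Second, your claim that iterated (palindromic) symmetrization yields $\tilde{g}(x_1,\ldots,x_L)=\exp\bigl(M_0\sum_{i=1}^{L}\log g(x_i)\bigr)$, with ``all higher BCH commutator contributions being annihilated by antisymmetry,'' is not correct outside nilpotency class $2$. Already in class $3$ the BCH formula contributes terms such as $\tfrac{1}{12}[X,[X,Y]]$, which are not antisymmetric under $X\leftrightarrow Y$, so the palindrome $1,\ldots,N,N,\ldots,1$ does not kill them, and no simple iteration of that trick will. The paper is careful here: it invokes Proposition \ref{prop:symmetrization} only for the Heisenberg case, and in the general case it cites \cite[Thm 14]{Hu2020}, which asserts only that a suitable product $\prod_i\sigma_i(\hat{g})$ is a \emph{symmetric} polynomial map in $x_1,\ldots,x_L$, not that it equals the exponential of a sum of logarithms. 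Fortunately the rest of your argument uses only symmetry (to rewrite each entry $\tilde{g}_{i,j}$ as a rational polynomial in the power sums), so this error does not propagate; but the stated mechanism is wrong and you should replace the explicit exponential formula by the weaker (and correct) symmetry conclusion of \cite[Thm 14]{Hu2020}.

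Your concluding observation --- that the rank analysis of Lemma \ref{lem:lem4deg} does not extend coordinate-by-coordinate to higher nilpotency class, which is why $V$ is not claimed to be proper --- matches the paper's remark after the proof and is well stated.
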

We first prove the fundamental case when $\langle g\rangle$ is finitely
generated, torsion-free and nilpotent. 
\begin{proof}
By \cite[Thm 7.5]{HallEdmonton1957}, every finitely generated torsion-free
nilpotent group $\langle g\rangle$ is isomorphic to a subgroup of
$\mathcal{U}_{n}(\mathbb{Z})$ for some $n=n(\langle g\rangle)$.
 Let $B$ be the least upper bound given in the simple observation
and $L'\in\mathbb{N}$ be the least number such that
\[
B\in\{\hat{d}_{i,j}(L)\mid1\le i<j\le n,1\le L\le L'\}.
\]
Then, $B$ cannot be $-\infty$ (resp. $0$), otherwise $g$ has degree
$-\infty$ (resp. $0$), and $B$ cannot be $1$, otherwise Inequalities
(\ref{eq:inequality B}) and \cite[(4.5)]{Hu2020} imply that 
\[
d\le\max\left\{ d_{k_{1},k_{2}}+\cdots+d_{k_{n-1},k_{n}}\mid1=k_{1}\le k_{2}\le\cdots\le k_{n-1}\le k_{n}=n\right\} \le B=1.
\]

Since $B\ge2$,, by Kamke's key theorem, there exist positive integers
$A$, $L''$, and positive numbers $i_{1}$ and $i_{\nu}$, $J_{\nu}$
with $0<i_{\nu}<J_{\nu}$, $\nu=2,3,\ldots,B$, such that for each
$B$ integers $s_{1},\ldots,s_{B}$, divisible by $A$ and subject
to the conditions 
\[
s_{1}\in(i_{1},\infty);\ s_{\nu}/s_{1}^{\nu}\in(i_{\nu},J_{\nu})\text{ for }\nu=2,3,\ldots,B,
\]
the $B$ equations $s_{\nu}=\sum_{\kappa=1}^{L''}x_{\kappa}^{\nu}$,
$\nu=1,2,\ldots,B$, are simultaneously solvable by integers $x_{\kappa}\ge0$.

Let $L=\max\{L',L''\}$ and consider the ordered product 
\[
\hat{g}:=\bigodot_{i=1}^{L}g:\mathbb{N}_{0}^{L}\to\mathcal{U}_{n}(\mathbb{Z});\ (x_{1},\ldots,x_{L})\mapsto g(x_{1})\cdots g(x_{L}).
\]
By \cite[Cor 8]{Hu2020}, $\langle\hat{g}\rangle$ has finite index
in $\langle g\rangle$. Then, by \cite[Thm 14]{Hu2020}, there exists
a finite natural number $M'$ and a sequence $\sigma_{1},\sigma_{2},\ldots,\sigma_{M'}\in S_{L}$,
such that the product 
\[
\tilde{g}=\prod_{i=1}^{M'}\sigma_{i}(\hat{g})=\sigma_{1}(\hat{g})\sigma_{2}(\hat{g})\cdots\sigma_{M'}(\hat{g})
\]
is a symmetric polynomial map in $L$ variables $x_{1},x_{2},\ldots,x_{L}$. 

Since $\langle g\rangle$ is finitely generated and nilpotent, by
a result due to Mal'tsev (cf. \cite[Thm 2.23]{CMZ2017}), $\langle\hat{g}\restriction_{\mathbb{N}_{0}}\rangle=\langle g^{L}\rangle$
has finite index in $\langle g\rangle$ and thus in $\langle\hat{g}\rangle$.
By \cite[Thm 15]{Hu2020} $\langle\tilde{g}\rangle$ has finite index
in $\langle\hat{g}\rangle$ and thus in $\langle g\rangle$.

 Each entry $\tilde{g}_{i,j}$ is a symmetric polynomial of total
degree $\le B$ and can be written as a polynomial expression with
rational coefficients in the power sum symmetric polynomials  $s_{1},s_{2},\ldots,s_{B}$,
i.e., 
\[
\tilde{g}_{i,j}(x_{1},\cdots,x_{N})=p_{i,j}(s_{1},\ldots,s_{B}),
\]
where $p_{i,j}$ is an integer-valued polynomial in $B$ variables
$s_{1},\ldots,s_{B}\in\mathbb{N}_{0}$ with rational coefficients.
Therefore, we can define the following continuous polynomial map in
$B$ variables $s_{1},\ldots,s_{B}\in\mathbb{R}_{\ge0}$: 
\[
p=\begin{pmatrix}1 & p_{1,2} & p_{1,3} & \cdots & p_{1,n}\\
 & 1 & p_{2,3} & \cdots & p_{2,n}\\
 &  & 1 & \ddots & \vdots\\
 &  &  & \ddots & p_{n-1,n}\\
 &  &  &  & 1
\end{pmatrix}:\mathbb{R}_{\ge0}^{B}\to\mathcal{U}_{n}(\mathbb{R}).
\]
Thus, the goal of studying the image of $\tilde{g}:\mathbb{N}_{0}^{N}\to\mathcal{U}_{n}(\mathbb{Z})$
is reduced to studying the image of the restriction of $p:\mathbb{R}_{\ge0}^{B}\to\mathcal{U}_{n}(\mathbb{R})$
on $U(B,N)\cap A\mathbb{N}_{0}^{B}$.  Consider the homomorphism
\[
\phi:H=\langle\tilde{g}\rangle\hookrightarrow\langle g\rangle\hookrightarrow\mathcal{U}_{n}(\mathbb{Z})\hookrightarrow\mathcal{U}_{n}(\mathbb{R}).
\]
Clearly, each element in $\phi^{-1}(p(U(B,N)))\cap\phi^{-1}(p(A\mathbb{N}_{0}^{B}))$
is a product of exactly $M=LM'$ elements in the sequence $g_{0},g_{1},g_{2},\ldots$.

Let $q:\mathbb{R}_{\ge0}^{n}\to\mathbb{R}^{B}$ be the continuous
polynomial map as in \cite[Lem 8]{Hu2020} such that its image has
nonempty interior in $U(B,N)$.  Then, $f:=p\circ q:\mathbb{R}_{\ge0}^{n}\to\mathcal{U}_{n}(\mathbb{R})$
a continuous polynomial map. Hence, $U=f(\mathbb{R}_{\ge0}^{n})$
is a polynomial set in $\mathcal{U}_{n}(\mathbb{R})$ and $V=\phi^{-1}(U)$
is a polynomial set in $\langle g\rangle$ such that each element
in $V\cap\phi^{-1}(p(A\mathbb{N}_{0}^{B}))$ is a product of exactly
$M=LM'$ elements in the sequence $g_{0},g_{1},g_{2},\ldots$. 
\end{proof}
\begin{rem*}
It is not guaranteed that $U=f(\mathbb{R}_{\ge0}^{n})$ is open or
proper in $\mathcal{U}_{n}(\mathbb{R})$; see the degenerate cases
in the proof of Theorem \ref{thm:WP for Heisenberg group}. 
\end{rem*}
Next, we will show how to reduce the most general case to the fundamental
case above. 
\begin{proof}
The goal is to reduce to the case when $G$ is finitely generated
torsion free nilpotent, in particular, when $G=\mathcal{U}_{n}(\mathbb{Z})$.
There are two possible ways to achieve this. If $g_{i}$ has finite
order for all $i$, then $G$ is a finite nilpotent group. So we may
assume that not all $g_{i}$ have finite order. 

An old result of Hirsch implies that every finitely generated nilpotent
group $G$ is isomorphic to a \emph{finite index} subgroup of a direct
product of a finite nilpotent group $T$ and a finitely generated
torsion-free nilpotent group $F$, cf. \cite[Thm 2.22]{Hirsch1938II}
and \cite[Thm 3.21 and Thm 3.23]{Hirsch1946III}.  Let $p_{1}:G\to T\times F\to T$
and $p_{2}:G\to T\times F\to F$ be canonical projections onto direct
summands of $T\times F$. We have two induced polynomial sequences
$t=p_{1}\circ g:\mathbb{N}_{0}\to T$ and $f=p_{2}\circ g:\mathbb{N}\to F$
such that $g=(t,f)$. 

The first idea is to take a periodic subsequence. By \cite[Prop 7]{Hu2020},
$t$ is periodic, say, of period $P\in\mathbb{N}$. Then, we can take
any periodic subsequence $s_{i}:=g_{iP+b}$, for some $0\le b<P$,
such that $p_{1}(s_{i})$ is a constant. Then, $s:\mathbb{N}_{0}\to G$
is a polynomial subsequence of degree not larger than the degree of
$g$, such that $p_{1}(s_{i})$ is a constant of finite order in $T$.
By \cite[Thm 13]{Hu2020}, $\langle s\rangle$ is a \emph{finite index}
subgroup of $G$. Instead of the polynomial sequence $g$, we work
with the periodic polynomial subsequence $s$. So we may assume that
$s:\mathbb{N}_{0}\to\mathbb{Z}/m\times\mathcal{U}_{n}(\mathbb{Z})$
for some $m$ and $p_{1}(s)=\bar{1}\in\mathbb{Z}/m$. Form the following
ordered product
\[
\hat{s}:=\bigodot_{i=1}^{L}s:\mathbb{N}_{0}^{L}\to\mathbb{Z}/m\times\mathcal{U}_{n}(\mathbb{Z});\quad(x_{1},\ldots,x_{L})\mapsto s(x_{1})\cdots s(x_{L}),
\]
and require that $L$ is divisible by $m$. Then, we have $p_{1}(\hat{s})=\bar{0}\in\mathbb{Z}/m$.

The second idea is to replace $g$ by $g^{|T|}=(t^{|T|},f^{|T|})=(1_{T},f^{|T|})$,
where $|T|$ is the cardinal of $T$. Since $G$ is finitely generated
and nilpotent, by a result due to Mal'tsev (cf. \cite[Thm 2.23]{CMZ2017}),
$\langle g^{|T|}\rangle$ is finitely generated torsion free nilpotent
and has finite index in $G$. But notice that one may not be able
to compare the degree of $g^{|T|}$ with the degree of $g$. 

Suppose that there were a normal subgroup $N'$ of infinite index
in $\langle s\rangle$ (resp. $\langle g^{|T|}\rangle$ such that
$s\mod N'$ (resp. $g^{|T|}\mod N'$) is constant. (Notice that $N'$
is not necessarily normal in $G$, as normality is not a transitive
relation.) Since $N'$ is a subgroup of infinite index in $G$, by
\cite[Lem 6]{Hu2020}, $N'$ is contained in a normal subgroup $N$
of infinite index in $G$. This implies that $s\mod N$ (resp. $g^{|T|}\mod N$)
is also constant, which is a contradiction. 

Hence, it suffices to work with $s$ or $g^{|T|}$, which generates
a finitely generated torsion free nilpotent subgroup of finite index
subgroup in $G$. 
\end{proof}
\begin{rem*}
If $\langle g\rangle$ is infinite and has nilpotency class $\le2$,
then the idea given in the previous proof also allows us to reduce
the problem to Theorem \ref{thm:WP for Z^m} or Theorem \ref{thm:WP for Heisenberg group}
and we can show that if $g\mod N$ is non-constant for any normal
subgroup $N$ of infinite index in $\langle g\rangle$, then there
exists $M\in\mathbb{N}$, a subgroup $H$ of finite index in $\langle g\rangle$
and a proper polynomial set $V$ of $H$ such that every element in
$V$ can be written as a product of at most $M$ elements in the sequence
$g_{0},g_{1},g_{2},\ldots$.  
\end{rem*}
\bibliographystyle{alpha}

\end{document}